\documentclass[12pt,a4paper]{amsart}

\usepackage{amsmath}
\usepackage{amsfonts}
\usepackage{amssymb}
\usepackage{amsthm}
\usepackage{calrsfs}

\usepackage{color}

\newtheorem{theorem}{Theorem}[section]
\newtheorem{lemma}[theorem]{Lemma}
\newtheorem{proposition}[theorem]{Proposition}
\newtheorem{remark}[theorem]{Remark}
\newtheorem*{acknowledgments}{Acknowledgments}

\newcommand{\R}{\mathbb{R}} 
\newcommand{\C}{\mathbb{C}} 
\newcommand{\N}{\mathbb{N}} 

\newcommand{\duality}[2]{\left\langle #1 \, ,\, #2 \right\rangle} 
\newcommand{\norm}[3]{\left\|#1\right\|^{#2}_{#3}} 

\DeclareMathOperator{\supp}{supp} 
\DeclareMathOperator{\dist}{dist} 

\newcommand{\ov}[1]{\overline{#1}}
\newcommand{\p}{\partial}

\author[]{Pedro Caro \and Valter Pohjola}
\title[]{Stability estimates for an inverse problem for the magnetic Schr\"odinger operator}
\date{January 14, 2013}
\keywords{Inverse problems; Magnetic Scr\"odinger; stability. \\ \indent MSC: 35R30.}
\address{Department of Mathematics and Statistics, Helsingin yliopisto / Helsingfors universite / University of Helsinki, Finland}
\email{pedro.caro@helsinki.fi}
\email{valter.pohjola@helsinki.fi}

\begin{document}

\begin{abstract} In this paper we prove stable determination of an inverse boundary value problem associated to a magnetic Schr\"odinger operator assuming that the magnetic and electric potentials are essentially bounded and the magnetic potentials admit a H\"older-type modulus of continuity in the sense of $L^2$.
\end{abstract}

\maketitle

\tableofcontents
\setcounter{tocdepth}{1}

\section{Introduction and main results} \label{sec:intro}
Let $ U $ be a bounded non-empty open subset of $ \R^n $ (from now on a domain)
with $ n \geq 3 $. Given a magnetic potential $A$ and an electric potential $q$, in the Lebesgue
spaces 
$L^\infty (U; \C^n)$ and $L^\infty (U)$ respectively,
we consider the magnetic Schr\"odinger operator  formally given by
\[
    L_{A,q} u = - (\nabla + i A) \cdot (\nabla + i A) u + q u.
\]
This corresponds to the operator
$ L^U_{A, q} :H^1(U) \longrightarrow H^{-1} (U) $ given by
\[
    \duality{L^U_{A, q} u}{v} = \int_{U} \nabla u \cdot \nabla v + i
    A \cdot (u \nabla v - v \nabla u) + (A \cdot A + q) u v \, dx
\]
for any $ u \in H^1(U) $ and $ v \in H^1_0(U) $. Here $H^1(U)$ denotes
the first order Sobolev space based in the Lebesgue space $L^2(U)$. The space
$H^1_0(U)$ denotes the closure in $H^1(U)$, of the compactly supported smooth functions in
$U$ and $H^{-1}(U)$ denotes its dual. For convenience, $ A \cdot A $ will be
denoted by $ A^2 $ and we will write $ L_{A,q} $ instead of $ L^U_{A,q} $
whenever the domain associated to this definition is clear. Note also that $ L_{A,
q} $ is linear and bounded.

Next we describe the boundary data of a $ H^1(U) $ solution $ u $ to the magnetic Schr\"odinger equation 
\begin{equation*}
    L_{A,q} u = 0
\end{equation*}
and then we define the Cauchy data set associated to this equation. It is well
known that the trace space of $ H^1(U) $, denoted here by $ TH^1(U) $, is
described by the quotient $ H^1(U)/H_0^1(U) $. The space $ TH^1(U) $ endowed
with the quotient norm, denoted by $ \norm{\cdot}{}{TH^1(U)} $, is a
Banach space. The trace map 
$ T_U : H^1(U) \longrightarrow TH^1(U) $ is defined by $ T_U u = [u] $ for any
$ u \in H^1(U) $, where $ [u] $ denotes the equivalence class of $ u $. For
convenience, we will write $ T $ instead of $ T_U $ whenever the associated
domain is clear. 

The normal component of the magnetic gradient on the boundary is, in a regular enough 
setting, given by $(\p_{\nu}+i\nu\cdot A)u|_{\p U}$
(where $\nu$ denotes the outward pointing unit normal vector on the boundary of $U$ denoted by $\partial U$). 
In our case we define this following \cite{KU}, as the bounded linear map 
$N^U_{A, q} : H^1(U) \longrightarrow TH^1(U)^\ast $ given by
\[ 
    \duality{N^U_{A,q} u}{g} = \int_{U} \nabla u \cdot \nabla v + i A
\cdot (u \nabla v - v \nabla u) + (A^2 + q) u v \, dx 
\]
for any $ u \in H^1(U) $ such that $ L_{A, q}u = 0 $ and any $ g \in
TH^1(U) $ such that $ g = Tv $. The space $ (TH^1(U)^\ast,
\norm{\cdot}{}{TH^1(U)^\ast}) $ denotes the dual space of $ (TH^1(U),
\norm{\cdot}{}{TH^1(U)}) $. Again, we will write $ N_{A,q} $ instead of 
$N^U_{A,q} $ whenever the domain is clear. 
Finally, the Cauchy data set of $ H^1(U)
$ solutions to the magnetic Schr\"odinger equation is defined as
\[
    C_{A, q} = \{ (Tu, N_{A, q} u) : u \in H^1(U),\, L_{A, q} u = 0 \}. 
\]
From now on, $ C_{A, q} $ will be referred as the Cauchy data set associated to
the operator $ L_{A,q} $. Note that $C_{A, q}$ encodes the information of the
solutions on the boundary of $U$, hence it is usually called boundary
measurements.

The inverse boundary value problem (IBVP for short) considered in this paper consists in recovering the magnetic and electric potentials from the knowledge of their associated Cauchy data set. Related to this problem, some other natural questions arise as uniqueness and stability.

The first question can be stated as follows: Given two magnetic
potentials $A_1,A_2 \in L^\infty (U; \C^n)$ and two
electric potentials $q_1, q_2 \in L^\infty (U)$, does 
$C_{A_1, q_1} = C_{A_2, q_2}$ imply $A_1 = A_2$ and
$q_1 = q_2$? The answer to this question is negative 
because of the 
following obstruction. For every $\varphi$ in the space\footnote{
The space $ \mathrm{Lip}(1,\ov{U})$ is the 
space of Lipschitz continuous functions, see \cite{St} for the precise
definition.}
$\mathrm{Lip}(1,\ov{U})$ 
with $\varphi (x) = 0$ for all $x \in \partial U$, one has
$C_{A+\nabla \varphi, q} = C_{A, q}$ (see \cite{KU} for details). Thus, from
the boundary measurements one can not distinguish between $A$ and $A + \nabla \varphi$. 
This problem does however not effect the magnetic field $dA$, which is interpreted
as follows.
Recall that any vector field $A \in L^\infty (U; \C^n)$ with components $A_j$
can be identified with the $1$-form
\[ \sum_{j = 1}^n A_j dx_j, \]
still denoted by $A$. The magnetic field induced by the potential $A$ 
is now given by
\[ 
    dA = \sum_{1 \leq j < k \leq n} (\partial_{x_j} A_k - \partial_{x_k} A_j) d_{x_j} \wedge d_{x_k}. 
\]
Due to the lack of smoothness of $A$, this definition has to be understood in
the sense of currents (i.e. differential forms in the sense of distributions). 
The magnetic potentials $A$ and $A + \nabla \varphi$ induce the same magnetic field,
since $d(A+d \varphi) = dA$ (where $d\varphi = \sum_1^n \partial_{x_j} \varphi dx_j$).
The non uniqueness described above does therefore not
extend to the magnetic fields. Thus, the problem we will consider consists in recovering the magnetic
field $dA$ and the electric potential $q$ from the Cauchy data set.

The question of stability essentially ask whether it is possible to provide a
quantitative answer to the (qualitative) question of uniqueness. More
precisely, if the proximity of the magnetic fields and electric potentials can
be estimated by the proximity of their corresponding Cauchy data sets. In order
to study  the question of stability, one should have some notion of proximity
between Cauchy data sets. Let $ A_1, A_2 \in L^\infty(U; \C^n) $ be two
magnetic potentials and let $ q_1, q_2 \in L^\infty(U) $ be two electric
potentials. Consider $ C_{A_j, q_j} $ the Cauchy data set associated to $
L_{A_j, q_j} $ with $ j \in \{ 1, 2 \} $. Given $ (f_j, g_j) \in C_{A_j, q_j} $
with $ j \in \{ 1, 2 \} $ set
\[ 
    I \big((f_j, g_j ); C_{A_k, q_k}\big) = \inf_{(f_k, g_k) \in C_{A_k, q_k}} \left[
    \norm{f_j - f_k}{}{TH^1(U)} + \norm{g_j - g_k}{}{TH^1(U)^\ast} \right], 
\]
with $ k \in \{ 1, 2 \} $. We define the pseudo-metric distance between $
C_{A_1, q_1} $ and $ C_{A_2, q_2} $ as
\begin{equation*}
\dist (C_{A_1, q_1}, C_{A_2, q_2}) = \max_{j, k \in \{ 1, 2 \}} \sup_{\substack{(f_j, g_j) \in C_{A_j, q_j} \\ \norm{f_j}{}{TH^1(U)} = 1}} I ((f_j, g_j); C_{A_k, q_k}).
\end{equation*}
This notion of proximity was introduced in \cite{Ca} and it has been successfully used to study the stability of certain IBVP on frameworks where the forward problem is ill-posed (see \cite{Ca11} and \cite{INUW}).

The uniqueness and the stability of this IBVP have been studied by several
authors under various regularity assumptions on the magnetic and electric
potentials. In \cite{Su}, a local uniqueness result was established for
magnetic potentials in $W^{2,\infty}$ and $L^\infty$ electric potentials --the
local nature of the result is due to a smallness condition imposed to the
magnetic potential. In \cite{NaSuU}, the smallness condition was removed for
smooth magnetic and electric potentials, and for compactly supported $C^2$
magnetic potentials and $L^\infty$ electric potentials.  The uniqueness results
were subsequently extended to $C^1$ magnetic potentials in \cite{T}, to some
less regular but small potentials in \cite{P}, and to Dini continuous magnetic
potentials in \cite{Sa}. The best result by now is \cite{KU}, by Krupchyk and
Uhlmann, where they proved uniqueness assuming the magnetic and electric
potentials to be essentially bounded. Furthermore, they do not require
regularity assumptions for the boundary of the domain. 
{Uniqueness for the closely related inverse scattering problem with a 
magnetic potential has been studied by Eskin and Ralston in \cite{ER}.}

The question of
stability has been studied in \cite{Tz} by Tzou. There, a $\log$-type stability
estimate is established for the IBVP studied in this paper,
assuming that the boundary of the domain
is smooth, the magnetic potentials are in $W^{2, \infty}$ 
with equal values on the boundary
and the electric potentials are in $L^\infty$.

The questions of uniqueness, stability and reconstruction for non-smooth frameworks have been recently studied for several IBVP as the Calder\'on problem (see \cite{AsP}, \cite{ClFR} and \cite{FRo} for dimension $n = 2$ and \cite{BTo}, \cite{HT}, \cite{CaGR} and \cite{GZh} for $n \geq 3$) and for an IBVP associated to the time-harmonic Maxwell equations (see \cite{CaZ}).

In this paper, we consider the question of stability associated to the previously described IBVP.
We improve considerably the stability result by Tzou
providing a quantitative version of the result proved by Krupchyk and Uhlmann.
In order to state precisely our result, we need to introduce some notation.

Given a domain $\Omega$ in $\R^n$ and two constants $M \in [ 1, + \infty) $ and $\varepsilon \in (0, 1)$, we define the class of admissible magnetic potentials, denoted by $\mathcal{A} (\Omega, M,\varepsilon, r)$ with $r \in [1, +\infty)$ or $r = \infty$, as the class of $A \in L^\infty (\Omega; \C^n)$ such that its extension by zero out of $\Omega$, still denoted by $A$, satisfies the a priori bound
\[ \norm{A}{}{L^\infty (\R^n; \C^n)} + |A|_{B^{2, r}_\varepsilon} \leq M. \]
Here
\[ |A|^2_{B^{2, r}_\varepsilon} = \sum_{j = 1}^n \left( \int_{\R^n} \frac{\norm{A_{j}(\cdot + y) - A_{j}}{r}{L^2(\R^n)}}{|y|^{n + r\varepsilon}} \, dy \right)^{2/r} \]
for $r \in [1, +\infty)$ and
\[ |A|^2_{B^{2, \infty}_\varepsilon} = \sum_{j = 1}^n \sup_{y \in \R^n \setminus \{ 0 \}} \frac{\norm{A_j(\cdot + y) - A_j}{2}{L^2(\R^n)}}{|y|^{2\varepsilon}} \]
with $ A_j $ denoting $ j $-th component of $ A $. Note that if $\partial \Omega$ can be locally described by the graph of a Lipschitz function and
\[ \norm{A}{}{L^\infty (\Omega; \C^n)} + |A|_{B^{2, r}_\varepsilon (\Omega)} \leq M, \]
then the extension by zero of $A$ out of $\Omega$ will satisfies (see \cite{Tr}) an a priori bound depending on $M$ as well as $n$ and $\Omega$. The same should happen for more general boundaries. This has been studied in \cite{FRo} for the case of Sobolev spaces $W^{s,p} (\Omega)$.

For $\Omega$ and $M$ as above, we also define the class of admissible electric potentials $\mathcal{Q} (\Omega, M)$ as the class of $q \in L^\infty (\Omega)$ such that
\[ \norm{q}{}{L^\infty (\Omega)} < M. \]
The extension by zero out of $\Omega$ of $q\in \mathcal{Q} (\Omega, M)$ will be also denoted by $q$.

\begin{theorem} \label{th:magnetic} \sl Let $\Omega$ be a domain in $\R^n$ and consider two constants $M \in [ 1, + \infty) $ and $\varepsilon \in (0, 1)$. There exists a constant $c_0 \in (0, 1)$, depending on $M$ and $\Omega$ such that if $A_1, A_2 \in \mathcal{A} (\Omega, M,\varepsilon, r)$ with $r \in [1, +\infty)$ or $r = \infty$, $q_1, q_2 \in \mathcal{Q} (\Omega, M)$, $C_j$ denotes the Cauchy data set associated to $A_j, q_j$ and $|\log \dist(C_1, C_2)|^{-1} < c_0$, then
\[
    \|  d A_1- d A_2  \|_{H^{-1}\boldsymbol\Omega^2(\R^n)}  
    \lesssim 
	\big|  \log \dist(C_1,C_2) \big|^{-c\varepsilon^2 / n}
\]
with $c \in (0, 1)$ universal. Moreover, if $\delta \in (1 - \varepsilon, 1)$ then
\[
    \|  d A_1- d A_2  \|_{B^{2, r}_{-\delta} \boldsymbol\Omega^2(\R^n)}  
    \lesssim 
	\big|  \log \dist(C_1,C_2) \big|^{-c \varepsilon (\delta - 1 + \varepsilon) / n}.
\]
The implicit constant in these estimates depend on $M$ and $\varepsilon$ as well as on $n$ and $\Omega$. The implicit constant on the second one also depends on $\delta$.
\end{theorem}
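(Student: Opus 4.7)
The plan is to follow the Alessandrini reduction in its quantitative form, tailored to rough magnetic potentials. The three pillars are: (i) complex geometric-optics (CGO) solutions with amplitudes that quantitatively eliminate the leading magnetic term, (ii) a bilinear integral identity that transfers the boundary proximity $\dist(C_1,C_2)$ into an interior estimate on $A_1-A_2$ and $q_1-q_2$, and (iii) a low/high-frequency split of $d(A_1-A_2)$, optimized in the CGO parameter.

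For step (i), for each $\xi\in\R^n$ and each large $\tau>0$, I would pick complex vectors $\zeta_1,\zeta_2$ with $\zeta_j\cdot\zeta_j=0$, $\zeta_1+\zeta_2=-i\xi$, $|\zeta_j|\sim\tau$, and build CGO solutions $u_j=e^{\zeta_j\cdot x}(a_j+r_j)$ to $L_{A_j,q_j}u_j=0$. The amplitudes $a_j$ morally solve the transport equation $\zeta_j\cdot(\nabla+iA_j)a_j=0$. Since $A_j$ is only $L^\infty\cap B^{2,r}_\varepsilon$, I would mollify $A_j$ at a scale $\sigma>0$, solve the transport equation exactly for the smooth $A_j^\sigma$, and carry the mollification error as an $O(\sigma^\varepsilon)$ loss coming from the Besov modulus of continuity. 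The remainders $r_j$ are then produced by a Carleman-type $L^2$-resolvent estimate for the conjugated operator, yielding $\|r_j\|_{L^2}\lesssim\tau^{-1}$ with additional polynomial-in-$\sigma^{-1}$ losses coming from the $H^1$-size of $a_j$.

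For step (ii), inserting the CGO pair into the standard polarization identity for the difference of Cauchy data yields, to leading order in $\tau$, an expression proportional to the Fourier transform $\widehat{d(A_1-A_2)}(\xi)$ (the lower-order terms combine $q_1-q_2$ and $A_1^2-A_2^2$, which are absorbed by further choices of amplitudes or peeled off using the fact that $dA$ is gauge-invariant). The boundary term is bounded by $\dist(C_1,C_2)\|u_1\|_{H^1}\|u_2\|_{H^1}\lesssim e^{C\tau}\dist(C_1,C_2)$, while the interior error is controlled by the remainders and the mollification error. Collecting everything one obtains a pointwise bound
\[ \bigl|\widehat{d(A_1-A_2)}(\xi)\bigr| \lesssim (1+|\xi|)\bigl[e^{C\tau}\dist(C_1,C_2) + \tau^{-\alpha}\sigma^{-\beta} + \sigma^\varepsilon\bigr] \]
valid for $|\xi|\lesssim\tau$, with positive $\alpha,\beta$ emerging from the CGO construction. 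For step (iii), I would split
\[ \|d(A_1-A_2)\|_{H^{-1}}^2 \lesssim \int_{|\xi|<\rho}\frac{|\widehat{d(A_1-A_2)}|^2}{1+|\xi|^2}\,d\xi + \int_{|\xi|\geq\rho}\frac{|\widehat{d(A_1-A_2)}|^2}{1+|\xi|^2}\,d\xi. \]
The low-frequency piece is controlled by the pointwise bound above integrated over the ball of radius $\rho$; the high-frequency piece uses only the a priori bound $A_j\in B^{2,r}_\varepsilon$, whose $L^2$-tail decays like $\rho^{-\varepsilon}$. For the second estimate the target norm $B^{2,r}_{-\delta}$ shifts this tail decay to $\rho^{-(\delta-1+\varepsilon)}$, which forces $\delta>1-\varepsilon$. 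Optimizing $\sigma$ in $\tau$ to balance $\tau^{-\alpha}\sigma^{-\beta}$ against $\sigma^\varepsilon$, then optimizing $\rho$ in $\tau$, and finally choosing $\tau=c|\log\dist(C_1,C_2)|$ with $c$ small enough that $e^{C\tau}\dist(C_1,C_2)\to 0$, yields the announced exponents $\varepsilon^2/n$ and $\varepsilon(\delta-1+\varepsilon)/n$.

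The main obstacle is the quantitative CGO construction in the Besov setting. Concretely, one must solve the transport equation for $A_j^\sigma$ with explicit control of how $\|a_j\|_{H^1}$ blows up jointly in $\sigma^{-1}$ and $\tau$, verify that the residual magnetic term paired with the CGO exponentials is genuinely an $O(\sigma^\varepsilon)$ perturbation (this is the step that decisively uses the Besov modulus and fixes the final exponent), and ensure the $\tau^{-1}$ gain in the remainder estimate survives these singular factors. Once $(\tau,\sigma,\rho)$ are successfully balanced, the passage from the pointwise Fourier bound to the norms $H^{-1}\boldsymbol\Omega^2$ and $B^{2,r}_{-\delta}\boldsymbol\Omega^2$ of $d(A_1-A_2)$, interpreted in the sense of currents as in the introduction, is routine.
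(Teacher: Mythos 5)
Your proposal follows essentially the same approach as the paper: mollify the rough potentials to construct Krupchyk--Uhlmann-type CGOs, use the bilinear pairing of the boundary data to obtain a pointwise bound on $\widehat{dA_1-dA_2}(\xi)$ of the form $|\xi|\bigl(e^{C/h}\dist(C_1,C_2)+h^{\varepsilon/(\varepsilon+2)}\bigr)$, split into low and high frequencies, and optimize all parameters in $|\log\dist(C_1,C_2)|$. The one step you gloss over is how the $e^{\Phi_1+\overline{\Phi_2}}$ factor carried by the amplitudes is removed from the integral in order to isolate the Fourier transform of $A_1-A_2$; the paper does this by first replacing $\Phi^\sharp_j$ by the non-mollified $\Phi_j$ (using the $B^{2,\infty}_\varepsilon$ modulus and inequality \eqref{in:exponential}) and then invoking Proposition~3.3 of \cite{KU}, which is not simply ``gauge invariance of $dA$'' but a genuine limiting argument. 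Also worth noting: your high-frequency tail estimate ($\rho^{-\varepsilon}$ decay directly from the Besov semi-norm) is slightly sharper than the paper's, which re-mollifies and balances to obtain $\rho^{-\varepsilon/(\varepsilon+1)}$; both lead to the same type of exponent, and yours would actually improve the constant marginally.
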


The symbol $ \lesssim $ holds for $ \leq $ modulo a multiplicative constant.
This constant is called here implicit constant. On the other hand, if $X(G)$
with $G$ a non-empty open {subset} of $\R^n$ denotes a function space, then $X
\boldsymbol \Omega^k (G)$ denotes the corresponding space for differential
forms of degree $k$. In particular, the definitions of
$H^{-1}\boldsymbol\Omega^2(\R^n)$ and $B^{2, r}_{-\delta}
\boldsymbol\Omega^2(\R^n)$ can be found right before Proposition \ref{dAStab}
and Proposition \ref{prop:dAbesov} in Section \ref{sec:stabilityMAG},
respectively.

\begin{theorem} \label{th:electric} \sl Consider $\lambda \in (0, 1]$ and $\theta \in (0, 2/n)$. Under the same assumptions as in Theorem \ref{th:magnetic} we have that there exists a constant $c_0 \in (0, 1)$ depending on $M$, $\Omega$, $n$, $\theta$ and $\varepsilon$ such that if $|\log \dist(C_1, C_2)|^{-1} < c_0$, then
\[
    \|  q_1- q_2  \|_{H^{-\lambda}(\R^n)}  
    \lesssim 
	\big|  \log \dist(C_1,C_2) \big|^{- c \theta \varepsilon^3 \lambda / n^2} 
\]
with $c \in (0, 1)$ universal.
Moreover, if $q_j \in B^{2, r}_\varepsilon (\R^n)$ and satisfies the a priori bound $\norm{q_j}{}{B^{2, r}_\varepsilon (\R^n)} < M$ then
\[
    \|  q_1- q_2  \|_{B^{2, r}_0(\R^n)}  
    \lesssim 
	\big|  \log \dist(C_1,C_2) \big|^{- c\theta \varepsilon^4 / n^2}.
\]
The implicit constant in these estimates depend on $M, \varepsilon$ and $\theta$ as well as on $n$ and $\Omega$. The implicit constant on the first one also depends on $\lambda$.
\end{theorem}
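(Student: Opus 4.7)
The plan is to combine a CGO-integral identity (in the spirit of Krupchyk--Uhlmann) with a high/low frequency split and bootstrap Theorem~\ref{th:magnetic}. I would first record the standard comparison identity associated to proximity of Cauchy data: for $u_j\in H^1(\Omega)$ with $L_{A_j,q_j}u_j=0$,
\[
\bigg|\int_{\Omega}(A_1-A_2)\cdot(u_1\nabla u_2-u_2\nabla u_1)+\int_\Omega(q_1-q_2+A_1^2-A_2^2)u_1u_2\bigg|\lesssim\dist(C_1,C_2)\,\|Tu_1\|_{TH^1(\Omega)}\|Tu_2\|_{TH^1(\Omega)}.
\]
Into this one inserts $L^\infty$-level CGO solutions $u_j=e^{x\cdot\rho_j}(a_j+r_j)$ with $\rho_j\in\C^n$, $\rho_j\cdot\rho_j=0$, $|\mathrm{Re}\,\rho_j|\sim\tau$, and $\rho_1+\rho_2=-i\xi$ for a prescribed frequency $\xi$. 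The amplitudes $a_j$ are built from a mollification of $A_j$ at scale $\eta\sim\tau^{-1}$ so as to absorb the transport part $\rho_j\cdot(\nabla+iA_j)$ up to a Besov-modulus error of order $\eta^\varepsilon$, while the remainder $r_j$ is controlled by a Carleman/resolvent estimate.

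Next, expanding the identity in powers of $\tau$ and using the transport equation to cancel the leading magnetic contribution, I would extract an estimate of the form
\[
|\widehat{q_1-q_2}(\xi)|\lesssim|\widehat{dA_1-dA_2}(\xi)|+\tau^{-c'\varepsilon}+e^{C\tau}\dist(C_1,C_2),
\]
valid for $|\xi|\leq\tau^\theta$ with $\theta\in(0,2/n)$; this is precisely the admissible range for which the CGO remainders stay small once squared and integrated over the Fourier ball of volume $\tau^{n\theta}$. The quadratic cross-term $A_1^2-A_2^2=(A_1+A_2)\cdot(A_1-A_2)$ is reduced to $dA_1-dA_2$ by a further Fourier cut-off combined with the a priori $L^\infty$ bound, so that it contributes no worse than the main magnetic term.

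To obtain the $H^{-\lambda}$-estimate I would then split $\|q_1-q_2\|_{H^{-\lambda}}^2=\int\langle\xi\rangle^{-2\lambda}|\widehat{q_1-q_2}(\xi)|^2\,d\xi$ into low frequencies $|\xi|\leq R=\tau^\theta$ (controlled by the pointwise bound above together with Theorem~\ref{th:magnetic}) and high frequencies $|\xi|>R$ (bounded by $R^{-\lambda}\|q_j\|_{L^\infty}\leq R^{-\lambda}M$). Choosing $\tau\sim c|\log\dist(C_1,C_2)|$ so that the factor $e^{C\tau}\dist$ remains polynomially small, optimising $R$ as a power of $\tau$, and finally inserting the $|\log\dist|^{-c\varepsilon^2/n}$ rate of Theorem~\ref{th:magnetic}, one recovers the announced rate $|\log\dist|^{-c\theta\varepsilon^3\lambda/n^2}$: the $\varepsilon^3$ exponent arises from composing the $\varepsilon^2$ loss of Theorem~\ref{th:magnetic} with the further $\varepsilon$-loss in the CGO remainder, while $\lambda$ is the usual interpolation exponent of the frequency split.

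For the Besov estimate, the extra hypothesis $q_j\in B^{2,r}_\varepsilon(\R^n)$ replaces the crude tail bound $R^{-\lambda}M$ by $R^{-\varepsilon}M$, and rerunning the optimisation with $\lambda$ essentially replaced by $\varepsilon$ yields the sharper rate $|\log\dist|^{-c\theta\varepsilon^4/n^2}$. The main obstacle I anticipate is the quantitative CGO construction at the regularity $L^\infty\cap B^{2,r}_\varepsilon$: the amplitudes $a_j$ can only be built up to a residual governed by the Besov modulus of $A_j$ at the regularisation scale, and the delicate part is to trade this residual against the exponentially large factor $e^{C\tau}\dist(C_1,C_2)$ coming from the Cauchy data identity, so as to pin down the precise exponents of $\varepsilon$, $\theta$, and $\lambda$ in the final logarithmic rates.
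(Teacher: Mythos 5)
Your sketch misses the central difficulty of the electric stability estimate, and as written the claimed pointwise inequality
\[
|\widehat{q_1-q_2}(\xi)|\lesssim|\widehat{dA_1-dA_2}(\xi)|+\tau^{-c'\varepsilon}+e^{C\tau}\dist(C_1,C_2)
\]
cannot be reached by the route you describe. The transport equation used to build the CGO amplitudes cancels the $\zeta_0\cdot(\nabla+iA^\sharp)$ part of the conjugated operator; it does \emph{not} cancel the first-order term $(A_1-A_2)\cdot(u_1\nabla\overline{u_2}-\overline{u_2}\nabla u_1)$ in the comparison identity. After inserting the CGOs that term is of size $h^{-1}$ (the gradients bring down $\zeta_j/h$), whereas the $q_1-q_2$ term is $O(1)$; so before one can extract any information about $q_1-q_2$, one must force that magnetic term to be \emph{small}, not merely expressible through $\widehat{dA_1-dA_2}$. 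Writing the first-order contribution in terms of $\widehat{dA_1-dA_2}$ and bootstrapping the bound of Lemma~\ref{dAFourier} does not help, since that lemma gives $|\widehat{dA_1-dA_2}(\xi)|\lesssim|\xi|(\dist\,e^{c/h}+h^{\varepsilon/(\varepsilon+2)})$ and dividing by $h$ still diverges. The same problem appears in your treatment of $A_1^2-A_2^2=(A_1+A_2)\cdot(A_1-A_2)$: in Fourier variables this is a \emph{convolution}, and a ``further Fourier cut-off combined with the $L^\infty$ bound'' does not reduce it pointwise to $\widehat{dA_1-dA_2}(\xi)$.

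The mechanism the paper actually uses, and which your proposal lacks, is a gauge transformation argument supported by a quantitative Hodge decomposition. One works on a larger ball $B\supset\overline{\Omega}$, exploits the invariance $C_{A_2+\nabla\varphi,q_2}=C_{A_2,q_2}$ for $\varphi\in W^{1,n}(B)\cap L^\infty(B)$ with $\varphi|_{\partial B}=0$ (Lemma~\ref{lem:gaugeINVA}), and then, after decomposing $A_1-A_2=d\psi+\delta F$ with $\mathbf{n}F=0$, uses the Friedrichs-type coercivity of the Hodge Laplacian to obtain the key estimate
\[
\norm{A_1-(A_2+\nabla\psi)}{}{L^2(B;\C^n)}\lesssim\norm{d(A_1-A_2)}{}{H^{-1}\boldsymbol\Omega^2(B)}
\]
(Proposition~\ref{prop:keyPOINT}, proved in Section~\ref{sec:keyPOINT}). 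After replacing $A_2$ by $A_2+\nabla\varphi$ with $\varphi=\chi(\psi-\psi^\ast)$, the first-order and quadratic terms become controllable by $\norm{d(A_1-A_2)}{\theta}{H^{-1}}$ (via an $L^2$--$L^p$ interpolation, which is where the exponent $\theta$ enters) multiplied by a negative power of $h$, and this is the quantity one then substitutes from Proposition~\ref{dAStab}. Note that the resulting Fourier estimate (Lemma~\ref{qFourier}) carries an extra factor $h^{-5/2}$ in front of the bootstrapped $|\log\dist|^{-\tilde{c}\theta\varepsilon^2/n}$, which is essential for the final optimization and is absent from your claimed inequality. Your high/low frequency split and the final Besov tail argument using $\norm{q_j}{}{B^{2,r}_\varepsilon}\leq M$ are in the right spirit, but without the gauge/Hodge step the optimization has nothing small to trade against $e^{c/h}\dist(C_1,C_2)$ on the magnetic side of the identity.
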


The definition of the spaces $H^{-\lambda}(\R^n)$ and $B^{2, r}_0(\R^n)$ can be
found right before Proposition \ref{dqStab} and Proposition \ref{prop:qSTAB} in
Section \ref{sec:stabilityELEC}, respectively. On the other hand, $f \in B^{2,
r}_\varepsilon (\R^n)$ if $f \in L^2(\R^n)$ and $|f|_{B^{2, r}_\varepsilon} <
\infty$. Note that $|\cdot|_{B^{2, r}_\varepsilon}$ has been defined for
vector fields, the definition for functions is similar.

Theorem \ref{th:magnetic} is consequence of Proposition \ref{dAStab} and
Proposition \ref{prop:dAbesov}. The stability estimates there are stated for norms with negative index since $dA_j$ has to be understood in a weak sense. On the other hand, Theorem \ref{th:electric} is consequence of Proposition \ref{dqStab} and Proposition \ref{prop:qSTAB}. The first stability estimate is stated for the norm of $H^{-\lambda} (\R^n)$ but, assuming an a priori upper bound on the norm $H^{\varepsilon} (\R^n)$ of the potentials, one could deduce a stability estimate controlling the $L^2(\R^n)$-norm of the potentials. We should not expect to prove a stability estimate controlling the $L^\infty(\R^n)$-norm since the potentials are not continuous.

Regarding the second {parts of the estimates} in Theorem \ref{th:magnetic} and Theorem
\ref{th:electric}, it is worth to point out that whenever $r = 2$ the norm of
the spaces $B^{2, r}_{-\delta} \boldsymbol\Omega^2(\R^n)$ and $H^{-\delta}
\boldsymbol\Omega^2(\R^n)$ and the spaces $B^{2, r}_0(\R^n)$ and $L^2(\R^n)$
are equivalent respectively.
Thus, the second estimates in the theorems
generalize the ones we would get by interpolation between the first estimates
in the theorems and the corresponding a priori bounds.

Let us now explain the main difficulties  and ideas in the proofs of Theorem \ref{th:magnetic} and Theorem
\ref{th:electric}. We start by recalling the qualitative argument due to
Krupchyk and Uhlmann. Their starting point is the following integral identity
\[
    \int_{\Omega} i (A_1 - A_2) \cdot (u_1 \nabla \overline{u_2} -
    \overline{u_2} \nabla u_1) + (A_1^2 - A_2^2 + q_1 - q_2) u_1 \overline{u_2} \,
    dx = 0, 
\]
which holds for $u_1$ and $u_2$ solving $ L_{A_1, q_1} u_1 = 0 $ and $
L_{\overline{A_2}, \overline{q_2}} u_2 = 0 $ respectively, whenever  $C_{A_1,q_1} = C_{A_2,q_2}$. 
They then proceed by constructing so called 
complex geometric optics solutions (CGOs for short) that are to be used with the
integral identity.
The CGOs are solutions of the form
\[ u = e^{\zeta \cdot x/h} (a + r(h)) \]
where $\zeta$ is a complex vector, $h$ is a small parameter, $a$ is a sort of
complex amplitude and $r(h)$ is a correction term that vanishes when $h$ goes
to zero. 
With the CGOs
and the integral identity at hand, they deduce that
$dA_1 = dA_2$. The next step for them was to prove that $q_1 = q_2$.
Using  the fact that $dA_1 = dA_2$ is unfortunately not by itself enough to remove
the $A_1$ and $A_2$ terms from the integral identity and  isolate 
the term containing $q_1 - q_2$. They solved this problem by
using the Poincar\'e lemma for currents to conclude that $A_1 = A_2 + \nabla
\varphi$, since $dA_1 = dA_2$. 
This allowed them to consider the pair of potentials $(A_1,q_1)$ and $(A_1 - \nabla \varphi,q_2)$, 
instead of the original ones.
Then, they exploited the gauge invariance of the Cauchy data sets in a
ball $B$ containing $\ov{\Omega}$, by picking a $\varphi$ that
vanishes on the boundary $\partial B$, to conclude that 
$C_{A_1, q_1} = C_{A_1 - \nabla \varphi,q_2}$, and hence that $C_{A_1, q_1} = C_{A_1,q_2}$.
Thus they could assume that $A_1=A_2$ in the above integral identity
and they could isolate the term containing $q_1 - q_2$ to prove that $q_1 = q_2 $.

Krupchyk and Uhlmann's construction of CGOs is based on the use of Carleman
estimates, and its main feature is that they only need to make approximation of
the magnetic potentials by smooth vector fields in the $L^2$
sense.\footnote{ Recently Haberman and Tataru proved in  \cite{HT} uniqueness for
the Calder\'on problem with continuously differentiable conductivities. The
reason why their argument does not provide uniqueness for general Lipschitz
conductivities is because, in the construction of the CGOs, they required to
approximate the gradient of conductivities in $L^\infty$ sense.}
Regarding a quantitative counterpart of Krupchyk and Uhlmann's approach, the
first point will be to find an appropriate class of magnetic potentials for
which the rate of approximation by smooth vector fields in the $L^2$ sense
(with respect to $h$) is the same. To do this, we only need to
prescribe an $L^2$ modulus of continuity and define the class as all the
magnetic potentials admitting this modulus of continuity. However, in order to
obtain the optimal stability for this IBVP, namely $\log$ type, we need to
assume that this modulus of continuity is of H\"older type, say of order
$\varepsilon$. This suggests examining magnetic potentials in the Besov spaces $B^{2,
r}_\varepsilon$. With this choice one can then relatively straight forwardly
prove stability for the magnetic
fields using the following integral estimate
\begin{align*}
\bigg| \int_{\Omega} i (A_1 - A_2) &\cdot (u_1 \nabla \overline{u_2} -
    \overline{u_2} \nabla u_1) + (A_1^2 - A_2^2 + q_1 - q_2) u_1 \overline{u_2}
    \, dx \bigg| \\
& \lesssim \dist(C_1, C_2) \norm{u_1}{}{H^1(\Omega)} \norm{u_2}{}{H^1(\Omega)}.
\end{align*}
The most difficult step is to prove stability for the electric potentials. One
is again faced with 
the problem of isolating the term containing $q_1 - q_2$, only controlling 
the difference of the magnetic fields $dA_1-dA_2$. 
A natural idea is then to mimic the uniqueness proof, use the gauge
invariance of the Cauchy data sets in the ball $B$ to modify the integral
estimate above and plug in appropriate CGOs. More precisely, use $B$ instead of $\Omega$, replace $A_2$ by $A_2
+ \nabla \varphi$, for a $\varphi$ in\footnote{ The space
$W^{1, p} (B)$ is the first-order Sobolev
space based on $L^p$ with $p \in [1, + \infty)$ or $p = \infty$.}
$W^{1, \infty} (B)$ with $\varphi|_{\partial B} = 0$, and plug in CGOs for $L^B_{A_1, q_1} u_1 = 0$ and $L^B_{\ov{A_2 + \nabla
\varphi}, \ov{q_2}} u_2 = 0$. 
The crucial point here is that the $A_1 - A_2$ term, that we cannot hope to control due
to the non uniqueness of the magnetic potentials, is replaced by 
$A_1 - (A_2 + \nabla \varphi)$ in the integral estimate. This later term can be controlled by the difference $dA_1- dA_2$.
One does this by choosing $\varphi$ suitably so that one is able to derive the estimate
\begin{equation}
    \norm{A_1 - (A_2 + \nabla \varphi)}{}{L^2(B; \C^n)} \lesssim \norm{dA_1 -
    dA_2}{}{H^{-1} \boldsymbol \Omega^2 (B)}
    \label{es:de_la_intro}
\end{equation}
where $\varphi \in W^{1, \infty} (B)$ for which $\varphi|_{\partial B} =
0$. 
An appropriate choice for $\varphi$ is the exact component of the Hodge decomposition 
$A_1-A_2$, which vanishes
on $\partial B$. It should be mentioned here that this is also roughly the idea in
\cite{Tz}, which deals with the case of more regular potentials. This idea needs however
several modifications to work in the less regular framework. The main reason
for the need to carry out these modifications is that the estimate we are able
to prove only
holds for $\varphi$ in 
$W^{1,p} (B)$ for every $p \geq n$. The restriction $p \neq \infty$ is consequence of
the elliptic regularity, which only holds for $1<p<+\infty$. Thus, if we did
not modify the previous approach, we could not use Krupchyk and Uhlmann's
method to construct CGOs for $L^B_{\ov{A_2 + \nabla \varphi}, \ov{q_2}} u_2 =
0$ since $\varphi \notin W^{1, \infty} (B)$. Finally, let us point out that
proving \eqref{es:de_la_intro} becomes in our case more technical than in
\cite{Tz} due to to the lack of regularity. The argument in \cite{Tz} is based on the
open mapping theorem and it is enough to prove the
bijectivity of certain operator --which is a qualitative property. Our
approach is however based on the $H^1$ ellipticity of the Hodge Laplacian and a
compactness argument.

The paper is organized as follows. In Section \ref{sec:integralID} we prove the
integral estimate that will be used as the starting point of our argument. In
Section \ref{sec:CGOs} we review the construction of the CGOs due to Krupchyk
and Uhlmann for the special case where the magnetic potentials satisfy a
prescribed $L^2$ modulus of continuity of H\"older type. In Section
\ref{sec:stabilityMAG} and Section \ref{sec:stabilityELEC} we prove stability
for the magnetic fields and the electric potentials respectively. In Section
\ref{sec:keyPOINT} we prove estimate \eqref{es:de_la_intro}, which is the key
ingredient in the proof of the stability for the electric potentials.

\section{From the boundary to the interior} \label{sec:integralID}
In this section we prove an integral estimate relating the electric and
magnetic potentials in $ \Omega $ with the distance between their corresponding
Cauchy data sets. This integral estimate will be our starting point in proving
the stability estimates for the IBVP under consideration.

\begin{proposition} \label{prop:integralID} \sl Let $ A_1, A_2 \in L^\infty(\Omega; \C^n) $ be two magnetic potentials and let $ q_1, q_2 \in L^\infty(\Omega) $ be two electric potentials. Let $ C_j $ with $ j \in \{ 1, 2 \} $ denote the Cauchy data set associated to the operator $ L_{A_j, q_j} $. Then, for any $ u_1 \in H^1(\Omega) $ solving $ L_{A_1, q_1} u_1 = 0 $ and any $ u_2 \in H^1(\Omega) $ solving $ L_{\overline{A_2}, \overline{q_2}} u_2 = 0 $, we have that
\begin{gather*}
\left| \int_{\Omega} i (A_1 - A_2) \cdot (u_1 \nabla \overline{u_2} - \overline{u_2} \nabla u_1) + (A_1^2 - A_2^2 + q_1 - q_2) u_1 \overline{u_2} \, dx \right| \\ \lesssim \dist(C_1, C_2) \left[ 1 + \norm{A_2}{2}{L^\infty(\Omega; \C^n)} + \norm{q_2}{}{L^\infty(\Omega)} \right] \norm{u_1}{}{H^1(\Omega)} \norm{u_2}{}{H^1(\Omega)},
\end{gather*}
where the implicit constant is universal.
\end{proposition}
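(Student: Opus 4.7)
My plan is to quantify the Krupchyk--Uhlmann integral identity from \cite{KU}. The starting observation is that $C_2$ is a linear subspace (the magnetic Schr\"odinger equation is linear and $0 \in C_2$), which makes $I(\,\cdot\,;C_2)$ positively homogeneous of degree one in its first argument. Together with the supremum definition of $\dist(C_1,C_2)$, this yields, for every $\eta > 0$, a solution $w \in H^1(\Omega)$ of $L_{A_2, q_2} w = 0$ such that
\[
    \|Tu_1 - Tw\|_{TH^1(\Omega)} + \|N_{A_1, q_1} u_1 - N_{A_2, q_2} w\|_{TH^1(\Omega)^\ast} \leq (\dist(C_1, C_2) + \eta)\, \|u_1\|_{H^1(\Omega)},
\]
where I have also used the universal inequality $\|Tu_1\|_{TH^1(\Omega)} \leq \|u_1\|_{H^1(\Omega)}$ coming from the definition of the quotient norm.

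The core step is to expand
\[
    \duality{N_{A_1, q_1} u_1 - N_{A_2, q_2} w}{T\ov{u_2}}
\]
using the definitions of the normal operators. Writing $r := u_1 - w$ and substituting $w = u_1 - r$ into the second pairing, the $u_1$-weighted $\nabla u_1 \cdot \nabla \ov{u_2}$, magnetic and potential contributions reorganise to produce exactly the integral in the statement, plus the residual
\[
    \int_\Omega \nabla r \cdot \nabla \ov{u_2} + i A_2 \cdot (r \nabla \ov{u_2} - \ov{u_2} \nabla r) + (A_2^2 + q_2)\, r\, \ov{u_2} \, dx.
\]
Taking the complex conjugate of the weak formulation of $L_{\ov{A_2}, \ov{q_2}} u_2 = 0$ shows that $\ov{u_2}$ weakly satisfies $L_{-A_2, q_2} \ov{u_2} = 0$, and a direct comparison of bilinear forms identifies this residual as $\duality{N_{-A_2, q_2} \ov{u_2}}{Tr} = \duality{N_{-A_2, q_2} \ov{u_2}}{Tu_1 - Tw}$.

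To finish, I would estimate each pairing through the duality $TH^1(\Omega) \times TH^1(\Omega)^\ast$. The first is bounded by $\|N_{A_1, q_1} u_1 - N_{A_2, q_2} w\|_{TH^1(\Omega)^\ast} \|u_2\|_{H^1(\Omega)}$. For the second, an elementary Cauchy--Schwarz estimate of the defining bilinear form of $N_{-A_2,q_2}$ gives the operator norm bound
\[
    \|N_{-A_2, q_2} \ov{u_2}\|_{TH^1(\Omega)^\ast} \lesssim \big(1 + \|A_2\|_{L^\infty(\Omega;\C^n)}^2 + \|q_2\|_{L^\infty(\Omega)}\big) \|u_2\|_{H^1(\Omega)},
\]
after using $\|A_2\|_{L^\infty} \lesssim 1 + \|A_2\|_{L^\infty}^2$ to absorb the linear term. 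Combining these with the first step and letting $\eta \to 0$ produces the claimed inequality.

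I do not anticipate a genuine obstacle: the result is essentially an algebraic identity dressed with a triangle inequality. The two places demanding careful bookkeeping are the positive homogeneity step, which relies on $C_2$ being a linear subspace through the origin to convert $\dist(C_1,C_2)$ into a quantitative matching of solutions, and the sign shift in the conjugation step, where the magnetic potential flips from $A_2$ to $-A_2$ on passing from $u_2$ to $\ov{u_2}$, so that the residual must be paired with $N_{-A_2, q_2} \ov{u_2}$ rather than $N_{A_2, q_2} \ov{u_2}$.
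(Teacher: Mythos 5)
Your proposal is correct and follows essentially the same route as the paper: both use the polarization identity for the bilinear forms $N_{A_j,q_j}$, introduce a comparison solution $w$ from $C_2$ via the homogeneity of the Cauchy data set, bound the cross term by the operator norm of the Dirichlet-to-Neumann type map for the second pair of potentials, and close with the definition of $\dist$. The only cosmetic difference is that the paper works with the conjugated pairing $\overline{\duality{N_{\overline{A_2},\overline{q_2}}u_2}{\cdot}}$ directly, whereas you avoid that conjugation by recognizing $\overline{u_2}$ as a solution of $L_{-A_2,q_2}\overline{u_2}=0$ and pairing $N_{-A_2,q_2}\overline{u_2}$ against $T(u_1-w)$; these reduce to the same estimate.
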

\begin{proof} Note that
\begin{gather} \label{id:startBOUNDARY}
\duality{N^U_{A_1, q_1} u_1}{\overline{T_U u_2}} - \overline{\duality{N^U_{\overline{A_2}, \overline{q_2}} u_2}{\overline{T_U u_1}}} \\
= \int_{U} i (A_1 - A_2) \cdot (u_1 \nabla \overline{u_2} - \overline{u_2} \nabla u_1) + (A_1^2 - A_2^2 + q_1 - q_2) u_1 \overline{u_2} \, dx \nonumber
\end{gather}
for any domain $ U \subset \R^n $. In this proof we only use the case $ U = \Omega $.
For the same reason, we know that
\begin{equation*}
\duality{\norm{T u_1}{}{TH^1(\Omega)} g_2}{\overline{T u_2}} - \overline{\duality{N_{\overline{A_2}, \overline{q_2}} u_2}{\overline{\norm{T u_1}{}{TH^1(\Omega)} f_2}}} = 0
\end{equation*}
holds for every $ (f_2, g_2) \in C_{A_2, q_2} $. Last identity immediately imply
\begin{gather*}
\left| \duality{N_{A_1, q_1} u_1}{\overline{T u_2}} - \overline{\duality{N_{\overline{A_2}, \overline{q_2}} u_2}{\overline{T u_1}}} \right| \\
\leq \norm{N_{A_1, q_1} u_1 - \norm{T u_1}{}{TH^1(\Omega)} g_2}{}{TH^1(\Omega)^\ast} \norm{T u_2}{}{TH^1(\Omega)} \\
+ \norm{N_{\overline{A_2}, \overline{q_2}} u_2}{}{TH^1(\Omega)^\ast} \norm{T u_1 - \norm{T u_1}{}{TH^1(\Omega)} f_2}{}{TH^1(\Omega)}.
\end{gather*}
On the other hand
\begin{equation*}
\norm{N_{\overline{A_2}, \overline{q_2}} u_2}{}{TH^1(\Omega)^\ast} \lesssim \left[ 1 + \norm{A_2}{2}{L^\infty(\Omega; \C^n)} + \norm{q_2}{}{L^\infty(\Omega)} \right] \norm{u_2}{}{H^1(\Omega)}.
\end{equation*}
Since
\[ \norm{T u_j}{}{TH^1(\Omega)} \leq \norm{u_j}{}{H^1(\Omega)} \]
for $ j \in \{ 1, 2 \} $, we have that
\begin{gather*}
\left| \duality{N_{A_1, q_1} u_1}{\overline{T u_2}} - \overline{\duality{N_{\overline{A_2}, \overline{q_2}} u_2}{\overline{T u_1}}} \right| \lesssim I \left((f_1, g_1) ; C_{A_2, q_2} \right) \\
\times \left[ 1 + \norm{A_2}{2}{L^\infty(\Omega; \C^n)} + \norm{q_2}{}{L^\infty(\Omega)} \right] \norm{u_1}{}{H^1(\Omega)} \norm{u_2}{}{H^1(\Omega)}.
\end{gather*}
where
\[ f_1 = \frac{Tu_1}{\norm{T u_1}{}{TH^1(\Omega)}}, \qquad g_1 = \frac{N_{A_1, q_1} u_1}{\norm{T u_1}{}{TH^1(\Omega)}}. \]
Now the statement of the proposition follows easily using \eqref{id:startBOUNDARY} and taking supremum and then maximum.
\end{proof}

\section{Complex geometric optics solutions}\label{sec:CGOs}
In this section, we review the properties of the CGOs constructed by Krupchyk
and Uhlmann in \cite{KU} for the particular case where the magnetic potential
satisfies a prescribed $ L^2 $-modulus of continuity. 
The additional regularity allows us to attain appropriate remainder estimates
that are needed later. We end the section by estimating the $ H^1 $-norm of these CGOs.

Throughout this section we assume that $ q \in L^\infty(U) $, $ A \in L^\infty(\R^n; \C^n) $ and
\[ \supp A \subset \overline{U}, \]
where $ U \subset \R^n $ is a domain.
For notational convenience, we write throughout this section $ \norm{q}{}{L^\infty} $ and $ \norm{A}{}{L^\infty} $ to denote the norms of $ q \in L^\infty(U) $ and $ A \in L^\infty(\R^n; \C^n) $, respectively. In addition, we assume that $ |A|^2_{B^{2, \infty}_\varepsilon} < \infty $. The definition of $|\cdot|_{B^{2, \infty}_\varepsilon}$ was given in Section \ref{sec:intro}.

Let $ \Psi $ belong to $ C^\infty_0(\R^n) $ with $ 0 \leq \Psi(x) \leq 1 $ for
all $ x \in \R^n $, $ \supp \Psi \subset \{ x \in \R^n : |x| \leq 1 \} $ and $
\int_{\R^n} \Psi \, dx = 1 $. Define $\Psi_\tau (x) = \tau^{-n} \Psi(x / \tau)
$ for $\tau \in (0, 1]$ and $ x \in \R^n $. Then $ A^\sharp = \Psi_\tau \ast A
\in C_0^\infty(\R^n,\C^n) $ (where convolution is taken with each component of
$ A $) and $ A^\flat = A - A^\sharp $ satisfies
\begin{equation} \label{es:Aflat}
\norm{A^\flat}{}{L^2(\R^n; \C^n)} \leq \tau^\varepsilon |A|_{B^{2, \infty}_\varepsilon}
\end{equation}
for $ \tau \in (0, 1] $. On the other hand,
\begin{equation} \label{es:part-deriv_Asharp}
\norm{\partial^\alpha A^\sharp}{}{L^\infty(\R^n; \C^n)} \lesssim \tau^{-|\alpha|} \norm{A}{}{L^\infty}
\end{equation}
for $ \tau \in (0, 1] $ and $ \alpha \in \N^n $, where the implicit constant in
this inequality only depends on $ \Psi $.

In \cite{KU}, Krupchyk and Uhlmann proved the existence of CGOs in $ H^1(U) $ solving
\[ L_{A, q} u = 0 \]
with $ A \in L^\infty (U; \C^n) $ and $ q \in L^\infty (U) $. These CGOs are solutions of the form
\begin{equation} \label{eq:CGOform}
u(x;\zeta,h) = e^{x\cdot\zeta/h} (a(x;\zeta,h) + r(x;\zeta,h))
\end{equation}
where $\zeta\in\C^n$ with $\zeta\cdot\zeta=0$ and $|\zeta|\sim 1$; $ h $ is a
small positive parameter; $a$ is a smooth amplitude and $r$ is a correction
term. In the next lines we follow Krupchyk and Uhlmann's ideas to check the
properties of $ u(\cdot; \zeta, h) $ in the particular case where $ A $
and $ q $ are as the beginning of the section.

Let the restriction of $ A $ to $ U $ be also denoted by $ A $. Consider $
\zeta = \zeta_0 + \zeta_1 $ with $ \zeta_0 $ independent of $ h $, $
\mathrm{Re}\, \zeta_0 \cdot \mathrm{Im}\, \zeta_0 = 0 $, $ |\mathrm{Re}\,
\zeta_0| = |\mathrm{Im}\, \zeta_0| = 1 $ and $ |\zeta_1| = \mathcal{O} (h) $ as
$ h $ becomes small. In order to construct $ u(\cdot; \zeta, h) $ of the
form of \eqref{eq:CGOform} satisfying $ L_{A, q} u = 0 $, it is enough to prove
the existence of a
$ r(\cdot; \zeta, h, \tau) \in H^1(U) $ solving
\begin{equation}\label{eq:remainder}
e^{- \zeta \cdot (\centerdot) / h} h^2 L_{A, q} ( e^{\zeta \cdot (\centerdot) / h} r) =
e^{- \zeta \cdot (\centerdot) / h} h^2 L_{A, q} ( e^{\zeta \cdot (\centerdot) / h} a) =: w,
\end{equation}
in $\R^n$.
One does this by first finding an 
$ a(\cdot; \zeta_0, \tau) \in C^\infty(\R^n) $ that solves
\begin{equation}\label{eq:transport}
\zeta_0 \cdot \nabla a + i \zeta_0 \cdot A^\sharp a = 0,
\end{equation}
so that $w$ becomes
\begin{align*}
w = & h^2\Delta a + i h^2 A \cdot \nabla a - h^2 m_A (a) - h^2 (A^2+q) a + 2h \zeta_1 \cdot \nabla a \\
 & + 2 h i\zeta_0 \cdot A^\flat a + 2 hi \zeta_1 \cdot A a,
\end{align*}
where $ m_A $ denotes the bounded linear operator from $ H^1(U) $ to $
H^{-1}(U) $ defined by
\[ \duality{m_A (\phi)}{\psi} = \int_{U} i \phi A \cdot \nabla \psi \, dx \]
for all $ \phi \in H^1(U) $ and all $ \psi \in H^1_0(U) $.

If we look for solutions to \eqref{eq:transport} in the form
\[ a(\cdot; \zeta_0, \tau) = e^{\Phi^\sharp(\cdot; \zeta_0, \tau)}, \]
it will be enough that $ \Phi^\sharp(\cdot; \zeta_0, \tau) $ satisfies
\begin{equation}
\zeta_0 \cdot \nabla \Phi^\sharp + i \zeta_0 \cdot A^\sharp = 0 \label{eq:deltabarSHARP}
\end{equation}
in $ \R^n $. Since $ \mathrm{Re}\, \zeta_0 \cdot \mathrm{Im}\, \zeta_0 = 0 $
and $ |\mathrm{Re}\, \zeta_0| = |\mathrm{Im}\, \zeta_0| = 1 $, $ \zeta_0 \cdot
\nabla $ is a $ \overline{\partial} $ operator in suitable coordinates.
Therefore, $ \Phi^\sharp = (\zeta_0 \cdot \nabla)^{-1} (- i \zeta_0 \cdot
A^\sharp) $ belongs to $ C^\infty (\R^n) $ and, using
\eqref{es:part-deriv_Asharp}, we have that
\begin{equation} \label{es:phi_sharp}
\norm{\partial^\alpha \Phi^\sharp}{}{L^\infty(\R^n)} \lesssim \tau^{- |\alpha|} \norm{A}{}{L^\infty}
\end{equation}
for $ \tau \in (0, 1] $ and $ \alpha \in \N^n $ (For more details see Lemma 4.6
in \cite{Sa} and Lemma 2.1 in \cite{Su}). Here the implicit constant only
depends on $ \alpha $. Moreover, $ \Phi (\cdot; \zeta_0) = (\zeta_0 \cdot
\nabla)^{-1} (- i \zeta_0 \cdot A) \in L^\infty (\R^n) $ solves
\begin{equation}
\zeta_0 \cdot \nabla \Phi + i \zeta_0 \cdot A = 0 \label{eq:deltabar}
\end{equation}
and  satifies
\begin{align}
& \norm{\Phi (\cdot; \zeta_0)}{}{L^\infty(\R^n)} \lesssim \norm{A}{}{L^\infty}, \label{es:phi} \\
& \norm{\chi (\Phi^\sharp (\cdot, \zeta_0, \tau) - \Phi (\cdot; \zeta_0))}{}{L^2(\R^n)} \lesssim \tau^\varepsilon |A|_{B^{2, \infty}_\varepsilon}, \label{es:PhiPhiSharp}
\end{align}
for any $ \chi \in C^\infty_0(\R^n) $. The implicit constant in \eqref{es:PhiPhiSharp} depends on $ \chi $ and $ U $. The estimate \eqref{es:PhiPhiSharp} is an immediate consequence of Lemma 3.1 in \cite{SyU} and the estimate \eqref{es:Aflat}.

Regarding equation \eqref{eq:remainder}, Krupchyk and Uhlmann proved (see
Proposition 2.3 in \cite{KU}) that there exists a positive decreasing function $
h_0 $ defined in $ (0, +\infty) \subset \R $ such that, for all $ h \leq h_0 (
\norm{A}{}{L^\infty(U; \C^n)} ) $, there exists $ r(\cdot; \zeta, h, \tau) $
which is a $ H^1(U) $ solution to \eqref{eq:remainder} and satisfies
\begin{equation} \label{es:remainder}
\norm{r}{}{H^1_\mathrm{scl} (U)} \lesssim \frac{1}{h} \norm{w}{}{H^{-1}_\mathrm{scl}(U)}.
\end{equation}
Here the implicit constant depends on $ U $. The semi-classical norms are defined by
\begin{align*}
\norm{r}{2}{H^1_\mathrm{scl} (U)} &= \norm{r}{2}{L^2(U)} + \norm{h \nabla r}{2}{L^2(U; \C^n)}, \\
\norm{w}{}{H^{-1}_\mathrm{scl}(U)} &= \sup_{\phi \in H^1_0(U) \setminus \{ 0
\}} \frac{| \langle w , \phi \rangle |}{\norm{\phi}{}{H^1_\mathrm{scl}(U)}}.
\end{align*}

On of the key properties of a CGO solution is that the correction term $r$
tends to vanish, in some sense, when the parameter $ h $ becomes small. This
can be deduced from \eqref{es:remainder} by computing 
$\norm{w}{}{H^{-1}_\mathrm{scl}(U)} $ and choosing $ \tau $ as a proper power of
$ h $:\\
By the Cauchy-Schwarz inequality and estimate \eqref{es:phi_sharp} we can prove
that there exists a constant $c > 0$ such that, for any $ \phi \in H^1_0(U)$,
\begin{gather*}
\left| \duality{h^2\Delta a + i h^2 A \cdot \nabla a + 2h \zeta_1 \cdot \nabla
a + 2 hi \zeta_1 \cdot A a}{\phi} \right| \\
\lesssim \frac{h^2}{\tau^2} e^{c\norm{A}{}{L^\infty}} \left( 1 +
\norm{A}{2}{L^\infty} \right) \norm{\phi}{}{H^1_\mathrm{scl}(U)}.
\end{gather*}
Again by the Cauchy-Schwarz inequality and \eqref{es:phi_sharp}, there exists a
constant $ c > 0 $ such that, for any $ u \in H^1_0(U) $,
\begin{equation*}
\left| \duality{h^2 (A^2+q) a}{\phi} \right| \lesssim h^2 \left( \norm{A}{2}{L^\infty} + \norm{q}{}{L^\infty} \right) e^{c\norm{A}{}{L^\infty}} \norm{\phi}{}{H^1_\mathrm{scl}(U)}.
\end{equation*}
By the Cauchy-Schwarz inequality and the estimates \eqref{es:Aflat} and \eqref{es:phi_sharp}, there exists $ c > 0 $ such that, for any $ \phi \in H^1_0(U) $,
\[ \left| \duality{2 h i\zeta_0 \cdot A^\flat a}{\phi} \right| \lesssim h \tau^\varepsilon e^{c\norm{A}{}{L^\infty}} |A|_{B^{2, \infty}_\varepsilon} \norm{\phi}{}{H^1_\mathrm{scl}(U)}. \]
Finally, by integrating by parts, Cauchy-Schwarz inequality, estimate
\eqref{es:phi_sharp}, \eqref{es:part-deriv_Asharp} and \eqref{es:Aflat}, there
exists a $ c > 0 $ such that, for any $ \phi \in H^1_0(U) $,
\begin{align*}
\left| \duality{h^2 m_A (a)}{\phi} \right| &\leq \left| \int_{U} i h^2 \nabla \cdot ( a A^\sharp ) \phi \, dx \right| + \left| \int_{U} i h^2 a A^\flat \cdot \nabla \phi \, dx \right| \\
&\lesssim e^{c\norm{A}{}{L^\infty}} \left[ \frac{h^2}{\tau} + \frac{h^2 \norm{A}{2}{L^\infty}}{\tau} + h \tau^\varepsilon |A|_{B^{2, \infty}_\varepsilon} \right] \norm{\phi}{}{H^1_\mathrm{scl}(U)}.
\end{align*}
The implicit constant in the last four inequalities depends on $ U $.
Therefore, choosing $ \tau = h^{1/(\varepsilon + 2)} $ in the above estimates
and using \eqref{es:remainder}, we see that, for $ h \leq h_0 (
\norm{A}{}{L^\infty (U; \C^n)} ) $,
\begin{equation}\label{es:remainder2}
\norm{r}{}{H^{1}_\mathrm{scl}(U)} \lesssim h^\frac{\varepsilon}{\varepsilon + 2} e^{c\norm{A}{}{L^\infty}} \left[1 + \norm{A}{2}{L^\infty} + \norm{q}{}{L^\infty} + |A|_{B^{2, \infty}_\varepsilon} \right].
\end{equation}

We end this section by estimating the $ H^1(U) $-norm of $ u(\cdot; \zeta, h) $:
\begin{equation}\label{es:uH1}
\norm{u}{}{H^{1}(U)} \lesssim e^{c'/h} e^{c\norm{A}{}{L^\infty}} 
\left[1 + \norm{A}{2}{L^\infty} + \norm{q}{}{L^\infty} + |A|_{B^{2, \infty}_\varepsilon} \right]
\end{equation}
where $ c' > 0 $ and the implicit constant depend on $ U $.

\section{Stability estimates for the magnetic fields}\label{sec:stabilityMAG}

The aim of this section is to prove Theorem \ref{th:magnetic} by deriving the two 
stability estimates for the magnetic fields.
The first step will be to use Proposition \ref{prop:integralID} and the CGOs
constructed in Section \ref{sec:CGOs} to estimate the Fourier transform of the
difference of the magnetic fields. Then, we prove the stability estimates in
Sobolev (the general approach follows \cite{Al}) and Besov spaces.

Consider an a priori constant $ M \in  [1, + \infty) $ and a small constant $
\varepsilon \in (0, 1) $. Let $ A_1, A_2 \in L^\infty(\Omega; \C^n) $ be
two magnetic potentials and let $ q_1, q_2 \in L^\infty(\Omega) $ be two
electric potentials. Assume that the extension by zero of $ A_j $ out of $
\Omega $, still denoted by $A_j$, satisfies $|A_j|_{B^{2, r}_\varepsilon} <
\infty $ with $r \in [1, +\infty)$ or $r = \infty$. Furthermore, assume that
\begin{equation}
\norm{A_j}{}{L^\infty (\R^n; \C^n)} + |A_j|_{B^{2, r}_\varepsilon} +
\norm{q_j}{}{L^\infty(\Omega)} \leq M \label{es:A_japriori}
\end{equation}
for $ j \in \{ 1, 2 \} $. 
The implicit constants in the inequalities may, throughout this section, depend 
on $ M $ and $ \varepsilon $, as well as on $ n $ and $ \Omega $.

For any $\xi\in\R^n$, consider $ \mu_1,\mu_2 \in \R^n $ such that
$|\mu_1|=|\mu_2|=1$ and $\mu_1\cdot\mu_2=\mu_1\cdot\xi=\mu_2\cdot\xi=0$. 
For any positive $ h $ with $ h \leq \min(1, 2/|\xi|) $, we define
\begin{equation}\label{eq:zeta_1_2}
\begin{aligned}
&\zeta_1=\frac{ih\xi}{2}+\mu_1 + i\sqrt{1-h^2\frac{|\xi|^2}{4}}\mu_2 \\
&\zeta_2=-\frac{ih\xi}{2}-\mu_1+i\sqrt{1-h^2\frac{|\xi|^2}{4}}\mu_2.
\end{aligned}
\end{equation}
Note that $\zeta_j\cdot\zeta_j=0$ for $j \in \{1, 2 \}$ and
$(\zeta_1+\overline{\zeta_2})/h=i\xi$. Moreover, $\zeta_1= \mu_1+
i\mu_2+\mathcal{O}(h)$ and $\zeta_2= -\mu_1+ i\mu_2+\mathcal{O}(h)$.

Let
\begin{equation}\label{for:CGO1}
u_1(x;\zeta_1,h) = e^{x\cdot\zeta_1/h} (e^{\Phi^\sharp_1(x; \mu_1+i\mu_2, h)} + r_1(x;\zeta_1,h))
\end{equation}
and
\begin{equation}\label{for:CGO2}
u_2(x;\zeta_2,h) = e^{x\cdot\zeta_2/h} (e^{\Phi^\sharp_2(x; -\mu_1+i\mu_2, h)} + r_2(x;\zeta_2,h))
\end{equation}
be CGO solutions of $ L_{A_1, q_1} u_1 = 0 $ and $ L_{\ov{A_2}, \ov{q_2}} u_2 = 0 $ --constructed\footnote{Note that the bounded inclusion $ B^{2, r}_\varepsilon (\R^n) \hookrightarrow B^{2, \infty}_\varepsilon (\R^n) $ and \eqref{es:A_japriori} provide an a priori bound for $ |A_j|_{B^{2, \infty}_\varepsilon} $ depending on $ M $.} as in Section \ref{sec:CGOs} for $ U = \Omega $ and $ \tau = h^{1/(\varepsilon + 2)} $.

We now state the estimate for the Fourier transform of the difference of the
magnetic fields.
Notice that we use the notations $A_1$ and $A_2$ for both the vector fields and the corresponding 
1-forms, depending on the context.
\begin{lemma}\label{dAFourier} \sl
There exists a constant $ c > 0 $ depending on $ \Omega $ such that
\[
\big| \widehat{dA_1}(\xi) - \widehat{dA_2}(\xi) \big|  \lesssim |\xi| \left( \dist (C_1,C_2) e^{c/h} 
    + h^{\varepsilon/(\varepsilon + 2)} \right)
\]
for all $ h \leq \min(1, 2/|\xi|, h_0(M)) $.
\end{lemma}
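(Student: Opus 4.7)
My plan is to insert the CGOs $u_1, u_2$ of Section \ref{sec:CGOs} into the integral identity of Proposition \ref{prop:integralID}, multiply through by $h/2$ to neutralise the $1/h$ blow-up coming from differentiating the exponential phases, and read off the Fourier transform of $d(A_1 - A_2)$ from the leading asymptotic term.

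Proposition \ref{prop:integralID} combined with the a priori bounds \eqref{es:A_japriori} and the $H^1$-estimate \eqref{es:uH1} yields
\[
\left| \int_\Omega i(A_1 - A_2)\cdot(u_1\nabla\overline{u_2} - \overline{u_2}\nabla u_1) + (A_1^2 - A_2^2 + q_1 - q_2) u_1\overline{u_2}\, dx \right| \lesssim \dist(C_1,C_2)\, e^{C/h}
\]
for some $C > 0$ depending on $M$ and $\Omega$. Because $(\zeta_1+\overline{\zeta_2})/h = i\xi$, the product $u_1\overline{u_2}$ carries the Fourier weight $e^{ix\cdot\xi}$, and expanding \eqref{for:CGO1}--\eqref{for:CGO2} gives
\[
u_1\nabla\overline{u_2} - \overline{u_2}\nabla u_1 = e^{ix\cdot\xi}\left[\frac{\overline{\zeta_2}-\zeta_1}{h}(e^{\Phi_1^\sharp}+r_1)(e^{\overline{\Phi_2^\sharp}}+\overline{r_2}) + a_1\nabla\overline{a_2} - \overline{a_2}\nabla a_1\right]
\]
with $a_j = e^{\Phi_j^\sharp} + r_j$. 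Using $(\overline{\zeta_2}-\zeta_1)/h = -(2/h)(\mu_1+i\mu_2) + O(h|\xi|^2)$ and multiplying the integral identity by $h/2$ singles out, at leading order,
\[
-i\int (A_1 - A_2)\cdot(\mu_1+i\mu_2)\, e^{ix\cdot\xi}\, e^{\Phi_1^\sharp+\overline{\Phi_2^\sharp}}\, dx,
\]
while the right hand side remains of order $\dist(C_1,C_2)\, e^{C/h}$ (since $h/2 \le 1$).

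Every other piece contributes only an error. The terms produced by $r_1, r_2$ are controlled by the semiclassical remainder estimate \eqref{es:remainder2} and yield $O(h^{\varepsilon/(\varepsilon+2)})$ after multiplication by $h/2$. The amplitude gradient terms $\nabla e^{\Phi_j^\sharp}$ are bounded by $1/\tau$ via \eqref{es:part-deriv_Asharp}, giving $O(h/\tau) = O(h^{(\varepsilon+1)/(\varepsilon+2)})$ with the choice $\tau = h^{1/(\varepsilon+2)}$. The $(A_j^2 + q_j)$-contributions, as well as the subleading part of $(\overline{\zeta_2}-\zeta_1)/h$ under the constraint $h|\xi| \le 2$, each contribute $O(h)$. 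Finally, combining \eqref{es:Aflat}, \eqref{es:PhiPhiSharp} with the bound $|e^z - e^w| \le e^{\max(|z|,|w|)}|z - w|$ and the a priori bounds $\|\Phi_j^\sharp\|_\infty, \|\Phi_j\|_\infty \lesssim M$ allows me to replace $e^{\Phi_j^\sharp}$ by $e^{\Phi_j}$ at the cost of an additional $O(\tau^\varepsilon) = O(h^{\varepsilon/(\varepsilon+2)})$. Collecting everything gives
\[
\left| \int (A_1-A_2)\cdot(\mu_1+i\mu_2)\, e^{ix\cdot\xi}\, e^{\Phi_1+\overline{\Phi_2}}\, dx \right| \lesssim \dist(C_1,C_2)\, e^{c/h} + h^{\varepsilon/(\varepsilon+2)}.
\]

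To convert this into a bound on $|\widehat{dA_1}(\xi) - \widehat{dA_2}(\xi)|$, I use that with $B = A_1 - A_2$ the Fourier transform $\widehat{dB}(\xi)_{jk} = i\xi_j\widehat{B}_k(\xi) - i\xi_k\widehat{B}_j(\xi)$ has norm bounded by $|\xi|$ times the projection of $\widehat{B}(\xi)$ onto $\xi^\perp$, so it suffices to control $\mu\cdot\widehat{B}(\xi)$ for every unit $\mu\in\xi^\perp$. Given such a $\mu$, I set $\mu_1 = \mu$ and complete with any unit $\mu_2 \perp \mu, \xi$ (which exists since $n \ge 3$); rerunning the argument with $-\mu_2$ in place of $\mu_2$ and combining the two bounds decouples the real and imaginary parts of $(\mu_1 + i\mu_2)\cdot\widehat{B}(\xi)$. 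The main obstacle will be the persistent weight $e^{\Phi_1 + \overline{\Phi_2}}$: the transport equations force $\zeta\cdot\nabla(\Phi_1 + \overline{\Phi_2}) = -i\zeta\cdot(A_1 + A_2)$, so this weight cannot be made to vanish, but it is uniformly bounded by $e^{cM}$ and its effect reduces to a bounded multiplicative perturbation that is absorbed into the implicit constant — this is precisely what distinguishes the magnetic case from the Alessandrini scheme for the scalar Calder\'on problem. Summing over an orthonormal frame of $\xi^\perp$ and reinstating the $|\xi|$ factor from the $d$ operator in Fourier yields the claimed inequality.
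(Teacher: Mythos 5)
Your scheme mirrors the paper's argument in every step but one, and the one it breaks on is the crucial one. After reaching
\[
\left| \int_\Omega (A_1-A_2)\cdot(\mu_1+i\mu_2)\, e^{ix\cdot\xi}\, e^{\Phi_1+\overline{\Phi_2}}\, dx \right| \lesssim \dist(C_1,C_2)\, e^{c/h} + h^{\varepsilon/(\varepsilon+2)},
\]
you claim that the weight $e^{\Phi_1+\overline{\Phi_2}}$ can be ``absorbed into the implicit constant'' because it is uniformly bounded by $e^{cM}$. This is false: $e^{\Phi_1+\overline{\Phi_2}}$ is a non-constant $x$-dependent function \emph{inside} the integral, and a uniform $L^\infty$ bound on the weight gives you the inequality in the useless direction --- it lets you bound the weighted integral \emph{by} the unweighted one, whereas you need the reverse. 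In fact removing this weight is precisely the step that makes the magnetic IBVP substantially harder than Calder\'on's scalar problem, and the paper invokes Proposition 3.3 of \cite{KU} (an averaging-over-rotations argument going back to Eskin--Ralston/Sun/Salo) to do it. You cannot avoid that ingredient.

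A secondary problem follows from the same gap: your proposed decoupling --- run the argument with $\mu_2$ and $-\mu_2$ and combine --- only works \emph{after} the weight is removed, not before. The transport solutions $\Phi_j=(\zeta_0\cdot\nabla)^{-1}(-i\zeta_0\cdot A_j)$ depend on $\zeta_0=\mu_1\pm i\mu_2$, so flipping $\mu_2$ changes the weight. The two bounds you get carry different weights $e^{\Phi_1^{(+)}+\overline{\Phi_2^{(+)}}}$ and $e^{\Phi_1^{(-)}+\overline{\Phi_2^{(-)}}}$, and they cannot be linearly combined to isolate $\mu\cdot\widehat{A_1-A_2}(\xi)$. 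In the paper, the weight is stripped off first, and only then are the two choices $\mu_1\pm i\mu_2$ combined. The rest --- multiplying by $h$, using $(\zeta_1+\overline{\zeta_2})/h=i\xi$, the error estimates from \eqref{es:remainder2}, \eqref{es:phi_sharp}, \eqref{es:Aflat}, \eqref{es:PhiPhiSharp}, \eqref{in:exponential}, and the final assembly of $\widehat{dA_1-dA_2}$ from the directional bounds $\mu\cdot\widehat{A_1-A_2}(\xi)$ for $\mu\perp\xi$ --- is sound and matches the paper.
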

\begin{proof} To prove the statement we just need to plug in $ u_1 $ and $ u_2 $,
as in \eqref{for:CGO1} and \eqref{for:CGO2}, in the estimate of Proposition
\ref{prop:integralID} multiplied by $ h $ and then study the behaviour in $
h $. The term $u_1 \ov{u_2}$ is bounded in $h$, since $(\zeta_1+\overline{\zeta_2})/h=i\xi$. 
One sees then by the 
Cauchy-Schwarz inequality, \eqref{es:phi_sharp} and \eqref{es:remainder2},
that
\begin{equation}
\left| \int_{\Omega} (A_1^2 - A_2^2 + q_1 - q_2) u_1 \overline{u_2}  \, dx \right| \lesssim 1.
\label{es:acotada1}
\end{equation}
Therefore, by Proposition \ref{prop:integralID}, \eqref{es:uH1} and
\eqref{es:acotada1}, there exists a constant $ c > 0 $, that depends on $
\Omega $, such that
\begin{equation*}
h \left| \int_{\Omega} (A_1 - A_2) \cdot (u_1 \nabla \overline{u_2} -
\overline{u_2} \nabla u_1) \, dx \right| \lesssim \dist(C_1, C_2) e^{c/h} + h.
\end{equation*}
Again by Cauchy-Schwarz inequality, \eqref{es:phi_sharp} and
\eqref{es:remainder2}; one can estimate the left hand side of last
estimate from bellow as follows
\begin{align*}
\bigg| \int_{\Omega} &(A_1-A_2)\cdot(\ov{\zeta_2}-\zeta_1) e^{ix\cdot\xi}
    e^{\Phi_{1}^\sharp+\ov{\Phi_{2}^\sharp}}dx  \bigg| \\
&\lesssim h \bigg| \int_{\Omega} (A_1 - A_2) \cdot (u_1 \nabla \overline{u_2} -
    \overline{u_2} \nabla u_1) \, dx \bigg| + h^{\varepsilon / (\varepsilon +
    2)}.
\end{align*}
Now we want to replace $\Phi^\sharp_j$ by $\Phi_j$ on the right hand side of this estimate. Since $\ov{\zeta_2}-\zeta_1 = -2(\mu_1 + i \mu_2) + \mathcal{O}(h)$ we have, by \eqref{es:phi_sharp}, that
\begin{align*}
\bigg|
\int_{\Omega}& (A_1-A_2)\cdot (\mu_1 + i \mu_2) 
e^{ix\cdot\xi} e^{\Phi_{1}+\ov{\Phi_{2}}}dx 
\bigg| \\
& \lesssim  
\bigg|
\int_{\Omega} (A_1-A_2)\cdot(\ov{\zeta_2}-\zeta_1) 
e^{ix\cdot\xi} e^{\Phi_{1}^\sharp+\ov{\Phi_{2}^\sharp}}dx \bigg|\\
&\quad+ 
\bigg|
\int_{\Omega} (A_1-A_2)\cdot (\mu_1 + i \mu_2) 
e^{ix\cdot\xi} \big( e^{\Phi_{1}^\sharp+\ov{\Phi_{2}^\sharp}} - e^{\Phi_{1}+\ov{\Phi_{2}}} \big) dx\bigg| + h.
\end{align*}
The second integral on the right hand side can be 
estimated as
\begin{equation*}
\bigg|
\int_{\Omega} (A_1-A_2)\cdot (\mu_1 + i \mu_2) 
e^{ix\cdot\xi} \big( e^{\Phi_{1}^\sharp+\ov{\Phi_{2}^\sharp}} - e^{\Phi_{1}+\ov{\Phi_{2}}} \big) dx \bigg| \lesssim  h^{\varepsilon/(\varepsilon+2)}.
\end{equation*}
using \eqref{es:PhiPhiSharp}, \eqref{es:phi_sharp}, \eqref{es:phi} and the inequality
\begin{equation}
|e^{z_1} - e^{z_2}| \leq |z_1 - z_2| e^{\max (\mathrm{Re} z_1, \mathrm{Re} z_2)}.
\label{in:exponential}
\end{equation}
Thus, we may write
\begin{equation*}
\bigg|
(\mu_1 + i \mu_2)  \cdot \int_{\Omega} (A_1-A_2)
e^{ix\cdot\xi} e^{\Phi_{1}+\ov{\Phi_{2}}}dx 
\bigg|
\lesssim \dist(C_1, C_2) e^{c/h} + h^{\varepsilon/(\varepsilon+2)}.
\end{equation*}
Now by Proposition 3.3 in \cite{KU} we can remove $ e^{\Phi_{1}+\ov{\Phi_{2}}} $ and get that
\begin{equation*}
\bigg|
(\mu_1 + i \mu_2) \cdot \int_{\Omega} (A_1-A_2)
e^{ix\cdot\xi} dx
\bigg|
\lesssim \dist(C_1, C_2) e^{c/h} + h^{\varepsilon/(\varepsilon+2)}.
\end{equation*}
To finish the proof, note that the above computations also hold if we replace $\mu_1 + i \mu_2$ by $\mu_1 - i \mu_2$, hence
\begin{equation*}
\bigg|
\mu \cdot \int_{\Omega} (A_1-A_2)
e^{ix\cdot\xi} dx
\bigg|
\lesssim \dist(C_1, C_2) e^{c/h} + h^{\varepsilon/(\varepsilon+2)}
\end{equation*}
for any unit vector $\mu$ such that $\mu \cdot \xi =0$.
In particular, it holds for the vectors $\mu_{j,k} = (\xi_j^2 + \xi_k^2)^{-1/2} (\xi_j e_k - \xi_k e_j)$ with $ j, k \in \{ 1, \dots, n \} $, since $\mu_{j,k} \cdot \xi = 0$. Here $\xi_j$ denotes the $j$-th component of $\xi$ and $e_k$ the $k$-th element of the canonical basis of $\R^n$. Thus
\begin{equation*}
\left| \widehat{dA_1} (\xi) - \widehat{dA_2} (\xi) \right| \lesssim |\xi| \left( \dist(C_1, C_2) e^{c/h} + h^{\varepsilon/(\varepsilon+2)} \right).
\end{equation*}
\end{proof}

Next we derive the stability estimate for the difference of the magnetic fields
in the Sobolev space $ H^{-1}\boldsymbol\Omega^2(\R^n) $ using the equivalent
norm given by
\[
    \|u\|^2_{H^{-1}\boldsymbol\Omega^2(\R^n)} = \int_{\R^n} (1+|\xi|^2)^{-1} |\widehat u(\xi)|^2 \, d\xi.
\]

\begin{proposition}\label{dAStab} \sl
There exist constants $ c > 1 $ depending on $ \Omega $ and $ 0< \tilde{c} < 1 $ universal such that 
\[
    \|  d A_1- d A_2  \|_{H^{-1}\boldsymbol\Omega^2(\R^n)}  
    \lesssim 
        \big|  \log \dist(C_1,C_2) \big|^{-\tilde{c}\varepsilon^2 / n},
\]
provided that
\[ \big|  \log \dist(C_1,C_2) \big|^{-1} \leq 1 / c \min(1, h_0(M)) .\]
\end{proposition}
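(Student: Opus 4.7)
The plan is a Fourier-space split of Alessandrini type. Setting $\delta = \dist(C_1, C_2)$, I break the integral defining the $H^{-1}$-norm at a radius $R > 0$:
\[
\|dA_1 - dA_2\|^2_{H^{-1}\boldsymbol\Omega^2(\R^n)} = \int_{|\xi|\le R} \frac{|\widehat{dA_1}(\xi)-\widehat{dA_2}(\xi)|^2}{1+|\xi|^2}\,d\xi + \int_{|\xi|> R} \frac{|\widehat{dA_1}(\xi)-\widehat{dA_2}(\xi)|^2}{1+|\xi|^2}\,d\xi,
\]
and treat the two pieces separately, choosing $R$ and $h$ at the end as appropriate powers of $|\log\delta|$.

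For the low-frequency part I pick a single $h$ independent of $\xi$ with $h \le 2/R \le h_0(M)$ and apply Lemma \ref{dAFourier} pointwise. Using $|\xi|^2/(1+|\xi|^2)\le 1$ and $|B(0,R)| \lesssim R^n$, this yields
\[
\int_{|\xi|\le R} \frac{|\widehat{dA_1}(\xi)-\widehat{dA_2}(\xi)|^2}{1+|\xi|^2}\,d\xi \lesssim R^n\bigl(\delta^2 e^{2c/h} + h^{2\varepsilon/(\varepsilon+2)}\bigr).
\]
Taking $h = c_0/|\log \delta|$ with $c_0 < 1/c$ forces $\delta^2 e^{2c/h} \le \delta$, so the first summand is absorbed and what remains is $\lesssim R^n |\log\delta|^{-2\varepsilon/(\varepsilon+2)}$. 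For the high-frequency part I use the pointwise bound $|\widehat{dA_j}(\xi)|\lesssim |\xi|\,|\widehat{A_j}(\xi)|$, which, together with $|\xi|^2/(1+|\xi|^2)\le 1$, reduces the tail to $\int_{|\xi|>R}|\widehat{A_1-A_2}(\xi)|^2\,d\xi$. The a priori bound $|A_j|_{B^{2,r}_\varepsilon}\le M$ and the inclusion $B^{2,r}_\varepsilon \hookrightarrow B^{2,\infty}_\varepsilon$ together give the standard Besov tail estimate $\int_{|\xi|>R}|\widehat{A_j}|^2\,d\xi \lesssim R^{-2\varepsilon}$. Adding the two contributions,
\[
\|dA_1 - dA_2\|^2_{H^{-1}\boldsymbol\Omega^2(\R^n)}\lesssim R^n |\log\delta|^{-2\varepsilon/(\varepsilon+2)} + R^{-2\varepsilon}.
\]

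Balancing the two terms gives $R = |\log\delta|^{2\varepsilon/((n+2\varepsilon)(\varepsilon+2))}$ and the squared bound $|\log\delta|^{-4\varepsilon^2/((n+2\varepsilon)(\varepsilon+2))}$; taking a square root and using $(n+2\varepsilon)(\varepsilon+2)\le 3(n+2)\le 5n$ for $n\ge 3$ and $\varepsilon<1$ produces the claimed rate $|\log\delta|^{-\tilde c\,\varepsilon^2/n}$ with a universal $\tilde c\in(0,1)$ (e.g.\ $\tilde c = 2/5$). The main technical point is the high-frequency Besov tail estimate for $r\ne 2$: for $r=2$ it is just Plancherel, but in general it requires a short argument using the translation characterisation of $B^{2,\infty}_\varepsilon$ together with averaging over translation directions to exploit $|e^{iy\xi}-1|\sim 1$ when $|y||\xi|\gtrsim 1$. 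The compatibility condition $h\le 2/R$ reads $R\le 2|\log\delta|/c_0$, which is automatic once $\delta$ is small enough that $R=|\log\delta|^{2\varepsilon/((n+2\varepsilon)(\varepsilon+2))}$ is much smaller than $|\log\delta|$; this is precisely where the smallness hypothesis $|\log\delta|^{-1}\le 1/(c\min(1,h_0(M)))$ enters.
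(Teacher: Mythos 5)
Your proof is correct and follows the same overall Alessandrini-type strategy (split the $H^{-1}$ norm into low and high frequencies, treat the low frequencies with Lemma \ref{dAFourier}, optimize $\rho$ and $h$ in terms of $|\log\dist(C_1,C_2)|$). Where you differ from the paper is in the treatment of the high-frequency tail. The paper reuses the mollification decomposition $A = A^\sharp + A^\flat$ from Section \ref{sec:CGOs}, with a new free parameter $\tau$: it bounds the tail by $\rho^{-2}\|dA^\sharp\|^2_{L^2} + \|A^\flat\|^2_{L^2} \lesssim \rho^{-2}\tau^{-2} + \tau^{2\varepsilon}$ and then optimizes $\tau = \rho^{-1/(\varepsilon+1)}$ to land on $\rho^{-2\varepsilon/(\varepsilon+1)}$. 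You instead use the direct Besov tail estimate
\[
\int_{|\xi|>R}|\widehat{A}(\xi)|^2\,d\xi \lesssim R^{-2\varepsilon}\,|A|^2_{B^{2,\infty}_\varepsilon},
\]
proved by averaging the Plancherel identity $\int|\widehat A|^2|e^{iy\cdot\xi}-1|^2 d\xi = \|A(\cdot+y)-A\|_{L^2}^2 \lesssim |y|^{2\varepsilon}$ over $|y|\sim 1/R$ and using the lower bound $\fint_{S^{n-1}}|e^{it\theta_1}-1|^2\,d\sigma(\theta)\gtrsim 1$ for $t\gtrsim 1$. This avoids introducing the extra parameter $\tau$ and yields the marginally sharper exponent $2\varepsilon^2/((\varepsilon+2)(n+2\varepsilon))$ in place of the paper's $2\varepsilon^2/((\varepsilon+2)(n+n\varepsilon+2\varepsilon))$; both reduce to a universal multiple of $\varepsilon^2/n$ for $n\ge 3$, $\varepsilon\in(0,1)$, and you correctly verify this numerically. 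The compatibility check $R\le 2/h$ and the absorption of $\dist(C_1,C_2)^2e^{2c/h}$ under the smallness hypothesis are handled exactly as in the paper. The one place where you gesture rather than prove is the Besov tail estimate itself (for $r\ne 2$), but the averaging argument you sketch is standard and correct, so this is a legitimate shortcut rather than a gap.
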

\begin{proof}
Let $B_\rho$ be denote the ball centred at $ 0 \in \R^n $ of radius $ \rho \geq 1 $ and let $B_\rho^c$ denote its complement in $ \R^n $. Let $A$ denote $A_1-A_2$ for clarity.
Using Lemma \ref{dAFourier} we may estimate
\begin{align}\label{es:Brho}
    \int_{B_\rho} 
    \frac{\big|\widehat{dA}(\xi)\big|^2}{1+|\xi|^2} d\xi
    \lesssim \rho^{n} \big( \dist(C_1,C_2) e^{c/h} + h^{\varepsilon/(\varepsilon + 2)} \big)^2
\end{align}
for all $ h \leq \min(1, 2/\rho, h_0(M)) $. Note that this $ c $ does not denote the one in the statement. On the other hand, write $ A = A^\sharp + A^\flat $ using the same notation as in Section \ref{sec:CGOs}, where the parameter $ \tau $ here is to be chosen. Then
\begin{align*}
    \int_{B_\rho^c}
    \frac{\big|\widehat{dA}(\xi) \big|^2}{1+|\xi|^2} d\xi 
    & \lesssim
    \int_{B_\rho^c}  
    \frac{|\widehat{dA^\sharp}|^2}{1+|\xi|^2} d\xi 
    + 
    \int_{B_\rho^c}  
    \frac{|\xi|^2 |\widehat{A^\flat}|^2}{1+|\xi|^2} d\xi \\
    & \lesssim
    \rho^{-2} \norm{dA^\sharp}{2}{L^2\boldsymbol\Omega^2(\R^n)} + \norm{A^\flat}{2}{L^2\boldsymbol\Omega^1(\R^n)}
    \end{align*}
Since the $ \supp A $ is compact, estimates \eqref{es:part-deriv_Asharp} and \eqref{es:Aflat} imply that
\begin{equation}\label{es:Bcrho}
	\int_{B_\rho^c}
    \frac{\big|\widehat{dA}(\xi) \big|^2}{1+|\xi|^2} d\xi
    \lesssim
    \rho^{-2} \tau^{-2} + \tau^{2\varepsilon}.
\end{equation}
Choosing $\tau = \rho^{-1/(\varepsilon+1)}$, we have
\begin{align*}
    \|  dA  \|^2_{H^{-1}\boldsymbol\Omega^2(\R^n)} 
    \lesssim \rho^{n} \dist(C_1,C_2)^2 e^{2c/h} + \rho^{n} h^{2\varepsilon/(\varepsilon + 2)}
    + \rho^{-2\varepsilon/(\varepsilon+1)}
\end{align*}
by \eqref{es:Brho} and \eqref{es:Bcrho}.
By equating the two last terms on the right hand side we express $\rho$ in terms of $h$ as
\[ \rho = h^{- \frac{2 \varepsilon (1+ \varepsilon)}{(2 + \varepsilon)(n + n\varepsilon + 2\varepsilon)}}, \]
which gives
\begin{align*}
    \|  dA  \|^2_{H^{-1}\boldsymbol\Omega^2(\R^n)} 
    \lesssim  \dist(C_1,C_2)^2 e^{c'/h} 
    + h^{4\varepsilon^2 / ((\varepsilon+2)(n + n\varepsilon + 2\varepsilon))}
\end{align*}
for $ c' > 2c $. Note that this choice of $ \rho $ satisfies the restriction $ \rho \leq 2/h $.
Finally, we set
\[ h = c' |\log \dist(C_1, C_2)|^{-1}\]
to prove the statement.
\end{proof}

We next derive the stability estimate for the difference of the magnetic fields
in the Besov space $ B^{2, r}_{-\delta}\boldsymbol\Omega^2 (\R^n) $ with $ 1 >
\delta > (1- \varepsilon) $ and the norm given by
\[ \norm{u}{r}{B^{2, r}_{-\delta}\boldsymbol\Omega^2 (\R^n)} = \sum_{j \in \N}
2^{-r\delta j} \norm{\Delta_j u}{r}{L^2\boldsymbol\Omega^2 (\R^n)} \]
for $ r \in [1, +\infty) $ and
\[ \norm{u}{}{B^{2, \infty}_{-\delta}\boldsymbol\Omega^2 (\R^n)} = \sup_{j \in
\N} \left( 2^{-\delta j} \norm{\Delta_j u}{}{L^2\boldsymbol\Omega^2 (\R^n)}
\right) \]
for $ r = \infty $.

In the following lines we describe the family of operators
$ \{ \Delta_j \}_{j \in \N} $. We begin by 
picking  a smooth  cut-off function $\eta$ defined in $
\R^n $ such that $ \eta (\xi) = 1 $ for $ |\xi| \leq 1 $ and $ \eta(\xi) = 0 $
for $ |\xi| \geq 2  $ and $ \kappa $ being defined as $ \kappa (\xi) =
\eta(\xi) - \eta(2 \xi) $. Note that $ \kappa $ is supported in the shell $ \{
\xi \in \R^n : 1/2 \leq |\xi| \leq 2 \} $ and $ \kappa (2^{-j} \cdot) $ is
supported in $ \{ \xi \in \R^n : 2^{j - 1} \leq |\xi| \leq 2^{j + 1} \} $. 
Notice that it follows from the definitions that these
functions form a partition of unity, i.e.
\[ 1 = \eta (\xi) + \sum_{j \in \N \setminus \{ 0 \}} \kappa(2^{-j} \xi) \]
for all $ \xi \in \R^n $. Finally let
$ \psi_0 $ be defined as $ \widehat{\psi_0}(\xi) = \eta (\xi) $ and let $
\psi_j $ with $ j \in \N \setminus \{ 0 \} $ be defined as $
\widehat{\psi_j}(\xi) = \kappa (2^{-j} \xi) $. The operator $ \Delta_j $ with $
j \in \N $ is then defined as $ \Delta_j u = \psi_j \ast u $.
\begin{proposition} \label{prop:dAbesov} \sl
There exist constants $ c > 1 $ depending on $ \Omega $  and $ 0< \tilde{c} < 1 $ universal such that 
\[
    \|  d A_1- d A_2  \|_{B^{2, r}_{-\delta} \boldsymbol\Omega^2(\R^n)}  
    \lesssim 
	\big|  \log \dist(C_1,C_2) \big|^{-\tilde{c}\varepsilon (\delta - 1 + \varepsilon) / n},
\]
provided that
\[ \big|  \log \dist(C_1,C_2) \big|^{-1} \leq 1 / c \min(1, h_0(M)) .\]
The implicit constant above depends also on $ \delta $.
\end{proposition}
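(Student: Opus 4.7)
The plan is to combine Lemma \ref{dAFourier} with a Littlewood--Paley decomposition of $dA := dA_1 - dA_2$ and to split the dyadic sum at a threshold $J \in \N$ chosen later, so that the parameter $\rho = 2^J$ plays the same role as the radius $\rho$ in the proof of Proposition \ref{dAStab}. The low-frequency pieces $\Delta_j dA$ with $j \leq J$ will be estimated via the pointwise Fourier bound supplied by Lemma \ref{dAFourier}, while the high-frequency pieces with $j > J$ will be controlled by the a priori Besov regularity of $A_1 - A_2$. The argument concludes by optimizing in $J$ and $h$, exactly as in Proposition \ref{dAStab}.

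For $0 \leq j \leq J$, the Fourier support of $\Delta_j dA$ lies in a shell of volume $\sim 2^{jn}$, so Plancherel combined with Lemma \ref{dAFourier} yields
\[
\|\Delta_j dA\|_{L^2\boldsymbol\Omega^2(\R^n)} \lesssim 2^{j(n/2+1)}\bigl(\dist(C_1,C_2)\,e^{c/h} + h^{\varepsilon/(\varepsilon+2)}\bigr),
\]
valid as long as $h \leq \min(1, 2^{1-J}, h_0(M))$. Multiplying by $2^{-j\delta}$ and taking $r$-th powers (or the supremum when $r = \infty$), the geometric factor $2^{j(n/2+1-\delta)}$, which is increasing in $j$ since $\delta < 1$, is dominated by its value at $j = J$, giving a contribution bounded by
\[
2^{J(n/2+1-\delta)}\bigl(\dist(C_1,C_2)\,e^{c/h} + h^{\varepsilon/(\varepsilon+2)}\bigr).
\]

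For $j > J$, the zero extension of $A_1 - A_2$ belongs to $B^{2,r}_\varepsilon(\R^n)$ with norm $\lesssim M$ (compact support and the $L^\infty$ bound providing the missing $L^2$ piece), so by the Littlewood--Paley characterization the sequence $(2^{j\varepsilon}\|\Delta_j(A_1 - A_2)\|_{L^2})_j$ lies in $\ell^r$ with norm $\lesssim M$. Since the exterior derivative acts as a Fourier multiplier of order one on each $\Delta_j$, one has $\|\Delta_j dA\|_{L^2} \lesssim 2^j \|\Delta_j(A_1 - A_2)\|_{L^2}$, so using $\delta - 1 + \varepsilon > 0$,
\[
\Bigl(\sum_{j > J} 2^{-jr\delta}\|\Delta_j dA\|_{L^2}^r\Bigr)^{1/r} \lesssim 2^{-J(\delta - 1 + \varepsilon)} \Bigl(\sum_{j > J} \bigl(2^{j\varepsilon}\|\Delta_j(A_1-A_2)\|_{L^2}\bigr)^r\Bigr)^{1/r} \lesssim M\, 2^{-J(\delta-1+\varepsilon)},
\]
with the usual reinterpretation for $r = \infty$.

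The final step equates the dominant contributions by setting $2^J = h^{-\kappa}$ with $\kappa = \frac{2\varepsilon}{(\varepsilon+2)(n+2\varepsilon)}$, which cancels the $\delta$-dependence in the exponents (exactly as the optimal $\rho$ in Proposition \ref{dAStab}); since $\kappa < 1$, this choice is compatible with the constraint $2^{J+1} \leq 2/h$. Choosing $h = c'|\log \dist(C_1,C_2)|^{-1}$ with $c' > c$ sufficiently large reduces $\dist(C_1,C_2)\,e^{c/h}$ to a power of $\dist(C_1,C_2)$ and hence to lower order than $h^{\varepsilon/(\varepsilon+2)}$, producing the rate $h^{\kappa(\delta-1+\varepsilon)}$. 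A direct check shows $\kappa(\delta-1+\varepsilon) \geq \tilde c\,\varepsilon(\delta-1+\varepsilon)/n$ for some universal $\tilde c \in (0,1)$ and all $n \geq 3$, $\varepsilon \in (0,1)$, yielding the claimed bound. The main technical point I anticipate is the uniform bookkeeping over all $r \in [1, \infty]$ and the verification that the Besov seminorm $|\cdot|_{B^{2,r}_\varepsilon}$ from Section \ref{sec:intro} is equivalent to the Littlewood--Paley norm used above; this is standard but requires some care, especially at the endpoint $r = \infty$.
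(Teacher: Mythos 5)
Your proof follows the same strategy as the paper's: a Littlewood--Paley split at dyadic level $J$, low frequencies estimated via Lemma \ref{dAFourier} (valid under $h \leq 2^{-J}$, not $2^{1-J}$ as you first wrote, though you use the correct version later), high frequencies via the a priori Besov bound on $A_1 - A_2$, and then optimization in $J$ and $h$. The only genuine difference is cosmetic: you keep the tighter low-frequency factor $2^{J(n/2+1-\delta)}$ and obtain a $\delta$-independent $\kappa$, whereas the paper coarsens to $2^{Jn}$ and uses a $\delta$-dependent threshold $2^{-k}\sim h^{\varepsilon/((\varepsilon+2)(\delta-1+\varepsilon+n))}$; both yield a rate of the form $h^{\mathrm{const}\cdot(\delta-1+\varepsilon)}$ with constant $\gtrsim \varepsilon/n$, so the argument is correct and essentially identical.
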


\begin{proof} Let $A$ denote $A_1-A_2$ for clarity. Consider $ k \in \N $ to be chosen later. For any $ j \in \N $ such that $ j \leq k $ we have by Lemma \ref{dAFourier} that
\[ 2^{-\delta j} \norm{\Delta_j dA}{}{L^2\boldsymbol\Omega^2 (\R^n)} \lesssim 2^{k (n/2 + 1)} \big( \dist(C_1,C_2) e^{c/h} + h^{\varepsilon/(\varepsilon + 2)} \big) \]
for all $ h \leq \min(2^{-k}, h_0(M)) $. Note that this $ c $ does not denote the one in the statement. On the other hand, if $ j > k $, then
\[ 2^{-\delta j} \norm{\Delta_j dA}{}{L^2\boldsymbol\Omega^2 (\R^n)} \lesssim 2^{-k(\delta - 1 + \varepsilon)} 2^{j \varepsilon} \norm{\Delta_j A}{}{L^2\boldsymbol\Omega^1 (\R^n)} \]
since $\delta - 1 + \varepsilon > 0$. Thus,
\[ \norm{dA}{}{B^{2, r}_{-\delta}\boldsymbol\Omega^2 (\R^n)} \lesssim 2^{k n} \dist(C_1,C_2) e^{c/h} + 2^{k n} h^{\varepsilon/(\varepsilon + 2)} + 2^{-k(\delta - 1 + \varepsilon)} \]
for all $ h \leq \min(2^{-k}, h_0(M)) $. Now choosing $ k \in \N $ such that
\[ 2^{-(k+1)} < h^\frac{\varepsilon}{(\varepsilon + 2)(\delta - 1 + \varepsilon + n)} \leq 2^{-k} \]
we know that there exists a constant $ c' > 0 $ such that
\[ \norm{dA}{}{B^{2, \infty}_{-\delta}\boldsymbol\Omega^2 (\R^n)} \lesssim \dist(C_1,C_2) e^{c'/h} + h^\frac{\varepsilon (\delta - 1 + \varepsilon)}{(\varepsilon + 2)(\delta - 1 + \varepsilon + n)}. \]
Note that the choice of $ k $ satisfies the restriction $ h \leq 2^{-k} $. Finally, we set
\[ h= 2c'|\log \dist(C_1, C_2)|^{-1} \]
to prove the statement.
\end{proof}

\section{Stability estimates for the electric potentials}\label{sec:stabilityELEC}
In this section we prove Theorem \ref{th:electric} by deriving the two stability
estimates for the electric potentials. 
Our starting point could again be the estimate given in Proposition
\ref{prop:integralID}. There are however some difficulties with this.
It seems that in order to isolate in that inequality the
difference $q_1-q_2$ we would need to control the difference
$A_1-A_2$. Unfortunately we can only control the difference
of the magnetic fields $dA_1-dA_2$. To overcome this difficulty we give a slight
modification of the estimate in Proposition \ref{prop:integralID}. This
modification is based on the invariance of the Cauchy data sets under gauge
transformations in an open ball $ B $ containing $ \ov{\Omega} $ (see
Lemma \ref{lem:gaugeINVA} below).
Then, we use the CGOs
constructed in Section \ref{sec:CGOs} to estimate the Fourier transform of the
difference of the electric potentials. Finally, we prove the stability
estimates in Sobolev and Besov spaces.

As in the previous section we consider an a priori constant 
$ M \in  [1, + \infty) $ and a small constant 
$\varepsilon \in (0, 1) $. Let $ A_1, A_2 \in L^\infty(\Omega; \C^n) $ be
two magnetic potentials and let $ q_1, q_2 \in L^\infty(\Omega) $ be two
electric potentials. Assume that the extension by zero of $ A_j $ out of $
\Omega $, still denoted by $A_j$, satisfies $|A_j|_{B^{2, r}_\varepsilon} <
\infty $ with $r \in [1, +\infty)$ or $r = \infty$. Let $ q_1 $ and $ q_2 $
also denote the extensions by zero of the electric potentials.
Furthermore, assume that
\begin{equation}
\norm{A_j}{}{L^\infty (\R^n; \C^n)} + |A_j|_{B^{2, r}_\varepsilon} + \norm{q_j}{}{L^\infty(\R^n)} \leq M
\label{es:A_japriori2}
\end{equation}
for $ j \in \{ 1, 2 \} $. Throughout this section, the constants implicit in each inequality may depend on $ M $ and $ \varepsilon $, as well as on $ n, \Omega $ and on an open ball $ B $ containing $ \ov{\Omega} $.

For notational convenience, the norms $ \norm{\cdot}{}{L^p(B)} $ and $ \norm{\cdot}{}{L^p(B;\C^n)} $ with $ 1 \leq p \leq \infty $ will be denoted by $ \norm{\cdot}{}{L^p} $ and we will write $ (A_2 + \nabla\varphi)^2 = (A_2 + \nabla\varphi) \cdot (A_2 + \nabla\varphi) $.
\begin{lemma} \label{lem:gaugeINVA} \sl
Let $ B $ denote an open ball containing $ \ov{\Omega} $ and let $ \varphi $
belong to $ W^{1,n}(B) \cap L^\infty (B) $ with $\varphi|_{\partial B} = 0$.
Then, for any $ u_1, u_2 \in H^1(B) $ solving $ L^B_{A_1, q_1} u_1 = 0 $ and $
L^B_{\overline{A_2}, \overline{q_2}} u_2 = 0 $, we have
\begin{align*}
    \bigg| \int_{B} &i e^{i \varphi} \big(A_1 - (A_2 + \nabla\varphi) \big) \cdot 
    (u_1 \nabla \overline{u_2} - \overline{u_2} \nabla u_1) \\
    +& e^{i\varphi} \big(A_1^2 - (A_2 + \nabla\varphi)^2 + q_1 - q_2 - (A_1 - A_2 - \nabla\varphi ) \cdot \nabla \varphi \big) u_1 \overline{u_2} \, dx \bigg| \\ 
    & \qquad \lesssim \dist(C_1, C_2) \norm{u_1}{}{H^1(\Omega)} \norm{u_2}{}{H^1(\Omega)}.
\end{align*}
\end{lemma}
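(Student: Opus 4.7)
My plan is to derive this modified identity by gauge-transforming $u_2$ on the auxiliary ball $B$, applying the general boundary identity \eqref{id:startBOUNDARY} on $B$, and then invoking gauge invariance of the Cauchy data set on $B$ to bound the resulting dual pairing by $\dist(C_1, C_2)$.

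The first step is the weak chain-rule identity
\[
    L^B_{A, q}(e^{i\psi} w) = e^{i\psi}\, L^B_{A + \nabla\psi, q} w \qquad \text{in } H^{-1}(B)
\]
for $w \in H^1(B)$ and $\psi \in W^{1,n}(B) \cap L^\infty(B)$. The bilinear form defining $L^B_{A+\nabla\psi, q}$ is well-defined on $H^1(B) \times H^1_0(B)$ because $\nabla\psi \in L^n$ and $H^1(B) \hookrightarrow L^{2n/(n-2)}(B)$; the identity itself follows by unwinding the two bilinear forms and using that multiplication by $e^{i\psi}$ preserves $H^1(B)$ and $H^1_0(B)$. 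Applied with $\psi = -\ov\varphi$ (and conjugated), this shows that $\tilde u_2 := e^{-i\ov\varphi} u_2 \in H^1(B)$ solves $L^B_{\ov{A_2 + \nabla\varphi},\, \ov{q_2}} \tilde u_2 = 0$, and since $\varphi|_{\partial B} = 0$ we also have $T_B \tilde u_2 = T_B u_2$.

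The second step is to write identity \eqref{id:startBOUNDARY} with $U = B$ for the pair of potentials $(A_1, q_1)$ and $(A_2 + \nabla\varphi, q_2)$ and solutions $u_1, \tilde u_2$. Substituting $\ov{\tilde u_2} = e^{i\varphi}\ov{u_2}$ and $\nabla\ov{\tilde u_2} = e^{i\varphi}(\nabla\ov{u_2} + i\ov{u_2}\nabla\varphi)$ into the integrand and collapsing the cross-term
\[
    i(A_1 - A_2 - \nabla\varphi)\cdot(iu_1\ov{u_2}\nabla\varphi) = -(A_1 - A_2 - \nabla\varphi)\cdot\nabla\varphi\, u_1\ov{u_2}
\]
reproduces exactly the integrand on the left-hand side of the lemma.

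The third step is to bound the dual-pairing side of \eqref{id:startBOUNDARY} by repeating the argument in the proof of Proposition \ref{prop:integralID} on $B$. The crucial input is the gauge invariance $C^B_{\ov{A_2+\nabla\varphi},\ov{q_2}} = C^B_{\ov{A_2},\ov{q_2}}$: since $\varphi|_{\partial B} = 0$, multiplication by $e^{i\ov\varphi}$ is a bijection on $H^1(B)$ preserving boundary traces and intertwining the two operators, so the competitors in the definition of $\dist$ can be drawn from $C^B_{A_2, q_2}$. The same manipulations as in Proposition \ref{prop:integralID} then give
\[
    |I| \lesssim \dist(C^B_{A_1,q_1}, C^B_{A_2,q_2})\, \norm{u_1}{}{H^1(B)} \norm{u_2}{}{H^1(B)},
\]
the implicit constant absorbing $\norm{\varphi}{}{W^{1,n} \cap L^\infty}$. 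Finally, since the potentials vanish on $B \setminus \Omega$, any $H^1(B)$ solution is harmonic in the shell; a harmonic-extension argument then yields $\dist(C^B_1, C^B_2) \lesssim \dist(C_1, C_2)$ and $\norm{u_j}{}{H^1(B)} \lesssim \norm{u_j}{}{H^1(\Omega)}$. This last passage is the main obstacle: the algebraic rearrangement is routine, but verifying uniform constants for the transfer from $B$ back to $\Omega$ via harmonic extension on the shell $B \setminus \Omega$ is the only genuinely analytic point.
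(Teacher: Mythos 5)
Your first two steps — the gauge transformation $\tilde u_2 = e^{-i\ov\varphi}u_2$, the trace invariance from $\varphi|_{\partial B}=0$, and the use of identity \eqref{id:startBOUNDARY} on $B$ with the transformed potentials to produce the modified integrand — are all correct and agree with the paper. Where you go wrong is the overall strategy: you try to bound the resulting dual pairing by $\dist(C^B_{A_1,q_1}, C^B_{A_2,q_2})\,\norm{u_1}{}{H^1(B)}\norm{u_2}{}{H^1(B)}$ and then transfer back to $\Omega$. That last transfer is not merely ``the only genuinely analytic point''; the inequality $\norm{u_j}{}{H^1(B)} \lesssim \norm{u_j}{}{H^1(\Omega)}$ is \emph{false}. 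Since the potentials vanish on the shell $B\setminus\ov\Omega$, the extension of $u_j$ to the shell is a harmonic function whose Cauchy data on $\partial\Omega$ matches that of $u_j|_\Omega$; this is a Cauchy problem for the Laplacian on the shell, which is ill-posed, so $\norm{u_j}{}{H^1(B\setminus\ov\Omega)}$ cannot be controlled linearly (or even with a modulus better than logarithmic) by $\norm{u_j}{}{H^1(\Omega)}$. For the same reason the comparison $\dist(C^B_1,C^B_2)\lesssim\dist(C_1,C_2)$ is not available.

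The paper avoids this entirely by never invoking $\dist$ on $B$. The correct route is: (i) apply Proposition \ref{prop:integralID} on $\Omega$ to bound the \emph{unmodified} integral $\int_\Omega i(A_1-A_2)\cdot(u_1\nabla\ov{u_2}-\ov{u_2}\nabla u_1)+(A_1^2-A_2^2+q_1-q_2)u_1\ov{u_2}\,dx$ by $\dist(C_1,C_2)\norm{u_1}{}{H^1(\Omega)}\norm{u_2}{}{H^1(\Omega)}$; (ii) observe the integral over $\Omega$ equals the integral over $B$ because the potentials vanish on the shell; (iii) use identity \eqref{id:startBOUNDARY} on $B$ \emph{twice}: once with $(A_1,q_1)$, $(A_2,q_2)$ to convert the integral over $B$ into the dual pairing $\duality{N^B_{A_1, q_1} u_1}{\overline{T_B u_2}} - \overline{\duality{N^B_{\overline{A_2}, \overline{q_2}} u_2}{\overline{T_B u_1}}}$, then (using the gauge invariance of $N^B$ and of the traces) identify that pairing with the one for $(A_1,q_1)$, $(A_2+\nabla\varphi,q_2)$ and $(u_1,\tilde u_2)$, and convert that pairing back into the modified integral. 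This chain of \emph{equalities} replaces your step 3 and eliminates step 4 altogether — no comparison between $\dist$ on $B$ and $\dist$ on $\Omega$, and no $H^1(B)$-by-$H^1(\Omega)$ estimate, is ever needed.
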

\begin{proof} Since the restrictions of $ u_1 $ and $ u_2 $ to $ \Omega $ (still denoted by $ u_1 $ and $ u_2 $) satisfy $ L^\Omega_{A_1, q_1} u_1 = 0 $ and $ L^\Omega_{\overline{A_2}, \overline{q_2}} u_2 = 0 $, we have, by Proposition \ref{prop:integralID}, that
\begin{align}
    \bigg| \int_{\Omega}& i (A_1 - A_2) \cdot (u_1 \nabla \ov{u_2} -
    \ov{u_2} \nabla u_1) + (A_1^2 - A_2^2 + q_1 - q_2) u_1 \ov{u_2} \,dx \bigg| \nonumber \\ 
    &\lesssim \dist(C_1, C_2)  \label{es:gaugeOrg}
    \norm{u_1}{}{H^1(\Omega)}
    \norm{u_2}{}{H^1(\Omega)}.
\end{align}
Note that $ A_j $ and $ q_j $ have been extended as zero out of $ \Omega $, so the domain of integration of the left hand side of \eqref{es:gaugeOrg} can be trivially augmented to $ B $.

On the other hand, by identity \eqref{id:startBOUNDARY}, we know that
\begin{align}
\int_{B} i (A_1 &- A_2) \cdot (u_1 \nabla \overline{u_2} - \overline{u_2} \nabla u_1) + (A_1^2 - A_2^2 + q_1 - q_2) u_1 \overline{u_2} \, dx \nonumber \\
& = \duality{N^B_{A_1, q_1} u_1}{\overline{T_B u_2}} - \overline{\duality{N^B_{\overline{A_2}, \overline{q_2}} u_2}{\overline{T_B u_1}}} \nonumber \\
& = \duality{N^B_{A_1, q_1} u_1}{\overline{T_B u_2}} - \overline{\duality{N^B_{\overline{A_2 + \nabla \varphi}, \overline{q_2}} (e^{-i \ov{\varphi}} u_2)}{\overline{T_B (e^{- i \varphi} u_1})}}. \label{id:beforeu1u2}
\end{align}
The last identity is just a straightforward computation, which can be justified because $e^{- i \varphi} u_1 $ and $ e^{- i\overline{\varphi}} u_2 $ belong to $ H^1(B) $ and the last one satisfies
\[ L^B_{\overline{A_2 + \nabla \varphi}, \overline{q_2}} (e^{- i\overline{\varphi}} u_2) = 0 .\]
The fact that $e^{- i \varphi} u_1 $ and $ e^{- i\overline{\varphi}} u_2 $ belong to $ H^1(B) $ can be deduced, by Sobolev's embeddings, from the following inequalities
\begin{align*}
\norm{e^{- i \varphi} u_1}{}{H^1(B)} &\lesssim e^{\norm{\varphi}{}{L^\infty}}  \Big( \norm{u_1}{}{L^2} + \norm{\nabla \varphi}{}{L^n} \norm{u_1}{}{L^d} + \norm{\nabla u_1}{}{L^2} \Big) \\
\norm{e^{- i\overline{\varphi}} u_2}{}{H^1(B)} &\lesssim e^{\norm{\varphi}{}{L^\infty}}  \Big( \norm{u_2}{}{L^2} + \norm{\nabla \varphi}{}{L^n} \norm{u_2}{}{L^d} + \norm{\nabla u_2}{}{L^2} \Big)
\end{align*}
where $ d = 2n/ (n - 2) $. These estimates are consequences of H\"older inequality.

Since $\varphi$ vanishes on the boundary of $B$, we know that
\begin{equation}
T_B (e^{- i\overline{\varphi}} u_2) = T_B u_2, \qquad T_B (e^{- i \varphi} u_1) = T_B u_1 \label{id:tracesu1u2}
\end{equation}
(see for example Lemma 2 in \cite{BTo}). Thus, again by identity \eqref{id:startBOUNDARY} as well as \eqref{id:tracesu1u2}, \eqref{id:beforeu1u2} and \eqref{es:gaugeOrg} we get
\begin{align*}
    \bigg| \int_{B} &i \big(A_1 - (A_2 + \nabla\varphi) \big) \cdot 
    (u_1 \nabla (e^{i \varphi} \overline{u_2}) - e^{i\varphi} \overline{u_2} \nabla u_1) \\
    &\quad \quad + \big(A_1^2 - (A_2 + \nabla\varphi)^2 + q_1 - q_2\big) u_1 e^{i \varphi} \overline{u_2} \, dx \bigg| \\ 
    & \lesssim \dist(C_1, C_2) 
    \norm{u_1}{}{H^1(\Omega)} \norm{u_2}{}{H^1(\Omega)}.
\end{align*}
Now the integral term on last estimate can be rewritten as in the statement of the proposition.
\end{proof}

The idea will be now to use the specific Hodge decomposition of Section \ref{sec:keyPOINT} 
and write $A_1-A_2 = d\psi + \delta F$, with the fact that 
we are able control the norm of the co-exact part $\delta F$,
i.e. $\norm{A_1 - (A_2 + \nabla \psi)}{}{L^2}$
(see estimate \eqref{es:HODGEgaugeSTAB}). 
Lemma \ref{lem:gaugeINVA}, allows us then to obtain an inequality with a 
gradient term added to $A_1-A_2$. By adding $\nabla \psi$ we would thus get
an integral estimate with terms that we know how to control.
We cannot however directly add $\nabla \psi$, because
of the requirement that $\varphi|_{\p B} = 0$ in Lemma \ref{lem:gaugeINVA}.
We resolve this problem by using a cut-off argument.

We choose $\varphi$ in Lemma \ref{lem:gaugeINVA} as
$\varphi = \chi (\psi - \psi^\ast)$, where $\chi$ will be a smooth cut-off function, with
$\chi = 1$ on the supports of the potentials and such 
that it makes $\varphi$ vanish near $\partial B$ and $\psi^\ast$ is a constant. 
The idea of the cut-off argument is roughly to split $\nabla \psi$ as 
$\nabla \psi = \nabla(\chi \psi) + \nabla((1-\chi) \psi)$. Since 
$\nabla(\chi\psi) = \nabla\varphi$, this part leads to terms that can be handled with
Lemma \ref{lem:gaugeINVA}. The support of the other part $\nabla((1-\chi) \psi)$ is disjoint
from the supports of the potentials. But outside the supports of the potentials
$d\psi = \delta F$. One
can hence expect to be able to apply estimate \eqref{es:HODGEgaugeSTAB}. This is done by using the
related estimate \eqref{es:outB'STAB}.


It might be helpful for the reader to know, prior to reading Section \ref{sec:keyPOINT},
that  $\nabla \psi$ is the sum of the exact component of the Hodge decomposition of
$A_1 - A_2$ which vanishes on $\partial B$ and the exact expression of its
harmonic component.\footnote{In the more regular framework of \cite{Tz}, it was
possible to take $\nabla \psi$ as the exact component of the Hodge
decomposition of $A_1 - A_2$ vanishing on $\partial B$. Thus, there was no need
of introducing the cut-off function $\chi$ or controlling $\psi - \psi^\ast$ by
$dA_1 -dA_2$.}


\begin{proposition} \label{prop:keyPOINT} \sl There exists $\psi \in W^{1, p} (B)$ with $ p \geq 2 $ satisfying the following conditions
\begin{equation}
\norm{\psi}{}{W^{1,p}(B)} \lesssim \norm{A_1 - A_2}{}{L^p} \label{es:boundHMFdecompoSTAB}
\end{equation}
and
\begin{equation}
\norm{A_1 - (A_2 + \nabla \psi)}{}{L^2} \lesssim \norm{d(A_1 - A_2)}{}{H^{-1} \boldsymbol\Omega^2 (B)}. \label{es:HODGEgaugeSTAB}
\end{equation}

Moreover, if $B'$ is a ball containing $\ov{\Omega}$ and such that $\ov{B'} \subset B$, then
\begin{equation}
\norm{\psi - \psi^\ast}{}{H^1(B \setminus \ov{B'})} \lesssim \norm{d(A_1 -
A_2)}{}{H^{-1} \boldsymbol\Omega^2 (B)}, \label{es:outB'STAB}
\end{equation}
where $\psi^\ast$ denotes the average of $\psi$ in $B \setminus \overline{B'}$.
\end{proposition}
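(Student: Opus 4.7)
The plan is to construct $\psi$ via a Hodge-type decomposition of the 1-form $A := A_1 - A_2$ on the ball $B$, isolating in $\nabla\psi$ precisely the exact and harmonic parts of $A$ so that the remaining co-exact part $A - \nabla\psi$ can be controlled purely by $dA$.

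First I would invoke an $L^p$-Hodge decomposition of $A \in L^p(B; \C^n)$ as $A = d\alpha + \delta\beta + h$, where $\alpha$ solves the Dirichlet problem $\Delta\alpha = \delta A$ in $B$ with $\alpha|_{\partial B} = 0$ (so that $\alpha \in W^{1,p}_0(B)$), $\delta\beta$ is co-exact, and $h$ is harmonic on $B$. Standard Calder\'on-Zygmund theory, valid for $1 < p < \infty$, gives $\|\alpha\|_{W^{1,p}(B)} \lesssim \|A\|_{L^p}$, and the remaining pieces inherit $L^p$-bounds from the boundedness of the associated projections. Since $B$ is simply connected, the harmonic 1-form $h$ (which is closed) is exact, so $h = d\psi_h$ for some smooth $\psi_h$. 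Setting $\psi := \alpha + \psi_h$ one obtains $\psi \in W^{1,p}(B)$ with \eqref{es:boundHMFdecompoSTAB}, and by construction the residual $v := A - \nabla\psi = \delta\beta$ is co-exact.

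The main work is in establishing the $L^2$-bound \eqref{es:HODGEgaugeSTAB}. The residual $v$ satisfies $\delta v = 0$ and $dv = dA$ in $B$, together with a boundary condition on the tangential trace of $\beta$ --- equivalently on the normal component of $v$ --- inherited from the chosen Hodge decomposition. I would prove
\[
\|v\|_{L^2(B;\C^n)} \lesssim \|dv\|_{H^{-1}\boldsymbol\Omega^2(B)} = \|dA\|_{H^{-1}\boldsymbol\Omega^2(B)}
\]
by combining the $H^1$-coercivity of the Hodge Laplacian $d\delta + \delta d$ on co-closed forms with these boundary conditions together with a compactness argument to rule out a nontrivial kernel: were the inequality to fail, one could extract a sequence $v_n$ of normalized co-closed forms, satisfying the boundary condition, with $dv_n \to 0$ in $H^{-1}$; ellipticity and Rellich would then produce a subsequential $L^2$-limit that is a harmonic field on $B$, which must vanish since $B$ is simply connected. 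I expect this passage through the $L^2$--$H^{-1}$ duality, in a framework where $dA$ is merely distributional, to be the main technical obstacle and to require the most careful setup; this is exactly the point the authors flag as where their argument is more delicate than in \cite{Tz}.

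Finally, for \eqref{es:outB'STAB}, recall that $A_1$ and $A_2$ are extended by zero outside $\Omega$, and $\ov{\Omega} \subset \ov{B'} \subset B$, so $A \equiv 0$ on $B \setminus \ov{B'}$. Consequently $\nabla\psi = -v$ on that set, which gives
\[
\|\nabla\psi\|_{L^2(B \setminus \ov{B'}; \C^n)} \leq \|v\|_{L^2(B; \C^n)} \lesssim \|dA\|_{H^{-1}\boldsymbol\Omega^2(B)}
\]
by the key estimate just discussed. Subtracting the average $\psi^\ast$ of $\psi$ on $B \setminus \ov{B'}$ and applying the Poincar\'e inequality on this annular region upgrades the gradient bound to the full $H^1$-estimate for $\psi - \psi^\ast$ stated in \eqref{es:outB'STAB}.
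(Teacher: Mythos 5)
Your first and third steps match the paper's approach: a Hodge--Morrey--Friedrichs decomposition of $A_1-A_2$ on the ball, using that the harmonic component is exact by simple-connectedness, and then Poincar\'e on the annulus $B\setminus\ov{B'}$. The genuine gap is in the central estimate $\|v\|_{L^2} \lesssim \|dv\|_{H^{-1}\boldsymbol\Omega^2(B)}$, and it is not a minor one.

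You propose to derive this as an a priori inequality for \emph{all} co-closed forms $v$ satisfying the Neumann-type boundary condition, arguing by contradiction plus Rellich. That inequality is in general false, and the contradiction argument does not produce it. Two separate problems. First, the normalized sequence $v_n$ with $\|v_n\|_{L^2}=1$, $\delta v_n=0$, $\|dv_n\|_{H^{-1}}\to 0$ need not be bounded in $H^1$ (you only control $dv_n$ in $H^{-1}$, not in $L^2$), so Rellich does not apply and you cannot extract a strong $L^2$-limit. Second, and more fundamentally, even triviality of the kernel gives only a qualitative statement; it does not yield the quantitative bound unless the relevant operator has closed range in the $L^2$-to-$H^{-1}$ pairing, which is exactly what you would need to prove. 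The paper's own Remark after Lemma~\ref{lem:HODGEgauge} flags the sharp issue: the variational identity for the co-exact part $\delta F$ naturally pairs $d(A_1-A_2)$ against test forms in $H^1\boldsymbol\Omega^2_N(B)$, not against $H^1_0\boldsymbol\Omega^2(B)$, so without extra structure you get control by $\|d(A_1-A_2)\|_{H^1\boldsymbol\Omega^2_N(B)^\ast}$ rather than by $\|d(A_1-A_2)\|_{H^{-1}\boldsymbol\Omega^2(B)}$.

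What your proposal is missing is precisely the ingredient that bridges this gap: $A_1-A_2$ is supported in $\ov{\Omega}\subset B'$ strictly inside $B$. The paper approximates $A_1-A_2$ by smooth $u_m$ compactly supported in $B'$, decomposes each $u_m$, and in the variational identity for $f_m$ replaces the test form $v$ by $\chi v$ with $\chi\in C^\infty_0(B)$, $\chi\equiv 1$ on $B'$. This costs nothing since $u_m$ is supported where $\chi=1$, but it puts the test form into $H^1_0$, giving $|\int\langle du_m,v\rangle|\lesssim\|du_m\|_{H^{-1}}\|v\|_{H^1}$. Combining this with the Friedrichs inequality \eqref{es:H1ellipticity} then yields the \emph{direct} bound $\|f_m\|_{H^1}\lesssim\|d(A_1-A_2)\|_{H^{-1}}+\|u_m-(A_1-A_2)\|_{L^2}$ (no contradiction argument), after which one extracts a weak $H^1$ / strong $L^2$ limit and identifies its co-differential with $\delta F$. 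Without the compact-support/cut-off device, the passage from the variational identity to an $H^{-1}$-norm bound simply does not go through, so you should incorporate it explicitly into the argument.
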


In order to continue with the argument, we postpone {the proof of Proposition \ref{prop:keyPOINT}, which is given in Section \ref{sec:keyPOINT}.}

In our analysis we will consider $\varphi = \chi (\psi - \psi^\ast)$ for $\chi
\in C^\infty_0 (B)$ such that $\chi (x) = 1$ for all $x \in B'$ and $ p > n $.
Thus,
\begin{equation}
e^{\norm{\varphi}{}{L^\infty}} (1 + \norm{\nabla \varphi}{}{L^n} + \norm{\nabla \psi}{}{L^n}) \lesssim 1 \label{es:boundedness1cHd}
\end{equation}
by Morrey's inequality, \eqref{es:boundHMFdecompoSTAB} and the boundedness of $B$.

For any $\xi\in\R^n$ and $ h \leq \min(1, 2/|\xi|) $, consider $ \zeta_ 1 $ and $ \zeta_2 $ as in \eqref{eq:zeta_1_2}. Let $ u_1 $ and $ u_2 $ be in the form \eqref{for:CGO1} and \eqref{for:CGO2} such that they solve $ L_{A_1, q_1} u_1 = 0 $ and $ L_{\ov{A_2}, \ov{q_2}} u_2 = 0 $ --constructed\footnote{Note that the bounded inclusion $ B^{2, r}_\varepsilon (\R^n) \hookrightarrow B^{2, \infty}_\varepsilon (\R^n) $ and \eqref{es:A_japriori2} provide an a priori bound for $ |A_j|_{B^{2, \infty}_\varepsilon} $ depending on $ M $.} as in Section \ref{sec:CGOs} for $ U = B $ and $ \tau = h^{1/(\varepsilon + 2)} $. We now state the estimate for the Fourier transform of the difference of the electric potentials by plugging in these solutions in the integral inequality given in Lemma \ref{lem:gaugeINVA}.

\begin{lemma} \label{qFourier} \sl
Let $ \theta $ belong to $ (0, 2/n) \subset \R $. There exist constants $ 0< \tilde{c} < 1 $ universal and $ c >
1 $ depending on $ \Omega, B, n $ and $ \theta $ such that
\begin{align*}
|\widehat{q_1} (\xi) - \widehat{q_2} (\xi)| \lesssim & \dist(C_1, C_2) e^{c/h}
    + \big| \log \dist(C_1,C_2) \big|^{- \tilde{c} \theta \varepsilon^2 / n} h^{-5/2}
    \\
& + h^{\varepsilon / (\varepsilon + 2)}
\end{align*}
for all $ h \leq \min(1, 2/|\xi|, h_0(cM)) $, 
provided that
\[ \big|  \log \dist(C_1,C_2) \big|^{-1} \leq 1 / c \min(1, h_0(M)).\]
Note that the implicit constant above also depends on $ \theta $.
\end{lemma}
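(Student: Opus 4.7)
The plan is to apply Lemma \ref{lem:gaugeINVA} with a carefully tuned gauge $\varphi$ and CGO test functions, extract the Fourier transform $\widehat{q_1-q_2}(\xi)$ from the resulting integral, and absorb every remaining contribution either into the boundary error $\dist(C_1,C_2)e^{c/h}$ or into a controlled power of $E:=\|d(A_1-A_2)\|_{H^{-1}\boldsymbol\Omega^2(B)}$, which by Proposition \ref{dAStab} is at most $|\log\dist(C_1,C_2)|^{-\tilde c\varepsilon^2/n}$. First, I would invoke Proposition \ref{prop:keyPOINT} with an exponent $p>n$ to be tuned in terms of $\theta$, pick a ball $B'$ with $\overline\Omega\subset B'\subset\!\subset B$, choose $\chi\in C^\infty_0(B)$ equal to $1$ on $B'$, and set $\varphi=\chi(\psi-\psi^*)$. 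Morrey's inequality ensures $\varphi\in W^{1,n}(B)\cap L^\infty(B)$ with $\varphi|_{\partial B}=0$, and splitting $B=B'\cup(B\setminus\overline{B'})$ and using \eqref{es:HODGEgaugeSTAB} on $B'$ together with \eqref{es:outB'STAB} on the annular complement produces the central estimate $\|A_1-A_2-\nabla\varphi\|_{L^2(B)}\lesssim E$.

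Next I would plug the CGOs \eqref{for:CGO1}--\eqref{for:CGO2}, constructed on $U=B$ with $\tau=h^{1/(\varepsilon+2)}$, into Lemma \ref{lem:gaugeINVA}; the right hand side is then at most $\dist(C_1,C_2)e^{c/h}$ by \eqref{es:uH1}. Since $A_1^2-(A_2+\nabla\varphi)^2-(A_1-A_2-\nabla\varphi)\cdot\nabla\varphi=(A_1+A_2)\cdot(A_1-A_2-\nabla\varphi)$, the scalar coefficient in the integrand reduces to $(A_1+A_2)\cdot(A_1-A_2-\nabla\varphi)+q_1-q_2$, so the left hand side splits into: (i) the gradient term weighted by $A_1-A_2-\nabla\varphi$, (ii) an $(A_1+A_2)\cdot(A_1-A_2-\nabla\varphi)\,u_1\overline{u_2}$-term absorbed into $O(E)$ by Cauchy--Schwarz (using that $u_1\overline{u_2}=e^{ix\cdot\xi}(a_1+r_1)(\overline{a_2}+\overline{r_2})$ is bounded in $L^2(B)$ uniformly in $h$), and (iii) the target $\int_B e^{i\varphi}(q_1-q_2)u_1\overline{u_2}\,dx$. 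For (i), H\"older's inequality with conjugate exponents $p$ and $p'$, the interpolation $\|A_1-A_2-\nabla\varphi\|_{L^p(B)}\lesssim E^{2/p}$ between the above $L^2$-bound and the $L^\infty$-a priori bound coming from \eqref{es:A_japriori2} and \eqref{es:boundHMFdecompoSTAB}, together with a Sobolev-type $L^{p'}$-estimate on $u_1\nabla\overline{u_2}-\overline{u_2}\nabla u_1$ (whose size in $h$ is governed by the $h^{-1}$ factor from differentiating $e^{x\cdot\zeta_j/h}$ and the bounds on $a_j,r_j$ from Section \ref{sec:CGOs}) jointly produce the $|\log\dist(C_1,C_2)|^{-\tilde c\theta\varepsilon^2/n}h^{-5/2}$ contribution once $p=p(\theta,n)$ is fixed.

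It then remains to rewrite (iii) as $\widehat{q_1-q_2}(\xi)$ plus admissible errors. Expanding $u_1\overline{u_2}$ and using \eqref{es:remainder2} to discard the terms involving the CGO remainders $r_j$, then replacing each $\Phi_j^\sharp$ by $\Phi_j$ via \eqref{es:PhiPhiSharp} and the inequality \eqref{in:exponential}, reduces (iii) to $\int_{\R^n}(q_1-q_2)e^{ix\cdot\xi}e^{i\varphi+\Phi_1+\overline{\Phi_2}}\,dx+O(h^{\varepsilon/(\varepsilon+2)})$. The key algebraic observation is that $i\varphi+\Phi_1-\overline{\Phi_2}$ satisfies
\[
    (\mu_1+i\mu_2)\cdot\nabla(i\varphi+\Phi_1-\overline{\Phi_2}) = -i(\mu_1+i\mu_2)\cdot(A_1-A_2-\nabla\varphi),
\]
by \eqref{eq:deltabar} applied to $A_1$ and to $A_2$. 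Inverting this $\bar\partial$-type operator by Cauchy-kernel convolution in the two-plane spanned by $\mu_1,\mu_2$ and using its $L^p\to L^\infty$ boundedness for $p>2$ yields $\|i\varphi+\Phi_1-\overline{\Phi_2}\|_{L^\infty(B)}\lesssim E^{2/p}$, so by \eqref{in:exponential} the weight $e^{i\varphi+\Phi_1+\overline{\Phi_2}}$ equals $(1+O(E^{2/p}))e^{2\overline{\Phi_2}}$; the remaining $e^{2\overline{\Phi_2}}$ factor is eliminated by the argument of Proposition 3.3 in \cite{KU}, giving the genuine Fourier transform modulo another comparable error.

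The main obstacle I anticipate is the tight coupling between this H\"older exponent $p$ and the two competing rates: increasing $p$ improves the $\bar\partial^{-1}$ step that removes the weight, but worsens the power of $1/h$ coming from the Sobolev-type bound on the CGO gradient in step (i). Balancing these two opposing effects forces the restriction $\theta\in(0,2/n)$ in the hypothesis and determines both the exponent $h^{-5/2}$ and the factor $\theta\varepsilon^2/n$ in the exponent of $|\log\dist(C_1,C_2)|$. Assembling the three resulting error contributions --the boundary term $\dist(C_1,C_2)e^{c/h}$ from Lemma \ref{lem:gaugeINVA}, the $h^{-5/2}|\log\dist(C_1,C_2)|^{-\tilde c\theta\varepsilon^2/n}$ term from the interpolation/$\bar\partial^{-1}$ step, and the $h^{\varepsilon/(\varepsilon+2)}$ term from the CGO remainders-- yields the three-term bound of the lemma.
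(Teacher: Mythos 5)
Your overall strategy is close to the paper's — Lemma \ref{lem:gaugeINVA} with $\varphi = \chi(\psi - \psi^\ast)$, CGOs, interpolation, and Proposition \ref{dAStab} — but there are two genuine gaps in the execution.

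First, the interpolation $\|A_1 - A_2 - \nabla\varphi\|_{L^p} \lesssim E^{2/p}$ ``between the $L^2$-bound and the $L^\infty$-a priori bound'' assumes an $L^\infty$ control on $\nabla\varphi$ (equivalently on $\nabla\psi$) that does not exist. Proposition \ref{prop:keyPOINT} gives $\psi \in W^{1,p}(B)$ with \eqref{es:boundHMFdecompoSTAB} only for finite $p \geq 2$; $\nabla\psi$ has no $L^\infty$ bound. The correct move — and what the paper does — is to interpolate the $L^2$-smallness in $E$ against a fixed $L^p$ a priori bound with $p > n$ large but finite. That is precisely why the achievable exponent $\theta$ is strictly less than $2/n$ and why $\theta \in (0, 2/n)$ appears in the hypothesis: as $p \to \infty$ the interpolation exponent at $L^n$ tends to $2/n$ but never reaches it. Your ``$L^\infty$ a priori bound'' pretends the endpoint is attainable, which it is not; this is not just a trade-off between competing rates as you suggest in your last paragraph, but a hard constraint from the interpolation.

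Second, the $\bar\partial$-step is wrong. You claim $(\mu_1+i\mu_2)\cdot\nabla(i\varphi+\Phi_1-\overline{\Phi_2}) = -i(\mu_1+i\mu_2)\cdot(A_1-A_2-\nabla\varphi)$, leaving a factor $e^{2\overline{\Phi_2}}$ to be removed separately by KU's Proposition 3.3. Taking the complex conjugate of $\zeta_0\cdot\nabla\Phi_2 + i\zeta_0\cdot\overline{A_2}=0$ with $\zeta_0 = -\mu_1+i\mu_2$ yields $(\mu_1+i\mu_2)\cdot\nabla\overline{\Phi_2} = +i(\mu_1+i\mu_2)\cdot A_2$, so the combination that solves the $\bar\partial$-equation with right-hand side $i(\mu_1+i\mu_2)\cdot(A_2 + \nabla\varphi - A_1)$ is $\Phi_1 + \overline{\Phi_2} + i\varphi$, not $\Phi_1 - \overline{\Phi_2} + i\varphi$. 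With the plus sign, $\Phi_1 - \overline{\Phi_2}$ is driven by $A_1 + A_2$, which is $O(1)$ rather than $O(E)$, so the claim that $e^{i\varphi+\Phi_1-\overline{\Phi_2}} = 1 + O(E^{2/p})$ is false. With the correct sign, the whole weight $e^{\Phi_1 + \overline{\Phi_2} + i\varphi}$ is already controlled — there is no leftover $e^{2\overline{\Phi_2}}$ and no need to invoke Proposition 3.3 of \cite{KU} in this step. The paper then controls $\|\Phi_1+\overline{\Phi_2}+i\varphi\|_{L^2}$ via the $L^2$-weighted boundedness of the $\bar\partial$-inverse (Lemma 3.1 in \cite{SyU}) combined with $\|A_2 + \nabla\varphi - A_1\|_{L^2} \lesssim E$, and handles the weight error by \eqref{in:exponential} and Cauchy--Schwarz — no $L^p\to L^\infty$ mapping property of $\bar\partial^{-1}$ is needed, which is cleaner since that would again force a finite-$p$ interpolation and degrade the power of $E$.
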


\begin{proof}
Adding and subtracting the same terms we get that
\begin{equation}
\begin{aligned}
    \bigg| & \int_{B} e^{i \varphi} (q_1 - q_2) u_1 \ov{u_2} \, dx \bigg| 
    \leq \bigg| \int_{B} e^{i \varphi} \big(A_1^2 - (A_2+\nabla\psi)^2\big) u_1 \overline{u_2} \,dx \bigg| \\
	& + \bigg| \int_{B} i e^{i \varphi} \big(A_1 - (A_2+\nabla\psi) \big) 
        \cdot (u_1 \nabla \overline{u_2} - \overline{u_2} \nabla u_1) \,dx \bigg| \\
    & + \bigg| \int_{B} e^{i \varphi} (A_1 - A_2 - \nabla \psi) \cdot \nabla \varphi \, u_1 \overline{u_2} \,dx \bigg| + I
\end{aligned} \label{es:originq1q2}
\end{equation}
where $I$ denotes
\begin{align*}
    \bigg| \int_{B} &i e^{i \varphi} \big(A_1 - (A_2 + \nabla\psi) \big) \cdot 
    (u_1 \nabla \overline{u_2} - \overline{u_2} \nabla u_1) \\
    +& e^{i\varphi} \big(A_1^2 - (A_2 + \nabla\psi)^2 + q_1 - q_2 - (A_1 - A_2 - \nabla\psi ) \cdot \nabla \varphi \big) u_1 \overline{u_2} \, dx \bigg|.    
\end{align*}
On one hand, note that H\"older inequality and \eqref{es:boundedness1cHd} imply
\begin{align*} 
\bigg| \int_{B} i & e^{i \varphi} \big(A_1 - (A_2+\nabla\psi) \big) \cdot (u_1
    \nabla \overline{u_2} - \overline{u_2} \nabla u_1)dx \bigg| \\
& \lesssim \| A_1 - (A_2 + \nabla\psi) \|_{L^n} 
    \Big( \big \| e^{\Phi^\sharp_1} + r_1 \big \|_{L^d} \big \| \nabla(e^{\Phi^\sharp_2}+r_2) \big \|_{L^2}\\
&\quad+\sum_{j\neq k}\big \|e^{\Phi^\sharp_j}
    + r_j \big \|_{L^d} h^{-1} \big \|e^{\Phi^\sharp}_k +r_k \big \|_{L^2} \\
&  \quad + \big \| e^{\Phi^\sharp_2} + r_2 \big \|_{L^d} \big \| \nabla(e^{\Phi^\sharp_1} 
    + r_1) \big \|_{L^2} \Big)
\end{align*}
since $ \zeta_1 + \ov{\zeta_2} = h i \xi $. Here $ d = 2n/ (n - 2) $. Then, since $ B $ is bounded, estimate \eqref{es:phi_sharp}, Sobolev's embedding and \eqref{es:remainder2} imply that
\begin{equation}
\begin{aligned}
\bigg| \int_{B} i & e^{i \varphi} \big(A_1 - (A_2+\nabla\psi) \big) 
        \cdot (u_1 \nabla \overline{u_2} - \overline{u_2} \nabla u_1)dx \bigg| \\
		& \lesssim \norm{A_1 - (A_2 + \nabla\psi)}{}{L^n} h^{-(\varepsilon + 4)/(\varepsilon + 2)}.
\end{aligned} \label{es:magne1}
\end{equation}
On the other hand, H\"older inequality and \eqref{es:boundedness1cHd} imply again
\begin{align*}
\bigg| \int_{B} e^{i \varphi}  \big(A_1^2 &- (A_2+\nabla\psi)^2\big) u_1
    \overline{u_2}dx \bigg| \\
        & \lesssim \norm{A_1^2 - (A_2 + \nabla\psi)^2}{}{L^{n/2}}
    \big \|e^{\Phi^\sharp_1} + r_1\big \|_{L^d} \big \|e^{\Phi^\sharp_2} + r_2\big \|_{L^d}
\end{align*}
since $ \zeta_1 + \ov{\zeta_2} = h i \xi $. Once again, since $ B $ is bounded,
estimate \eqref{es:phi_sharp}, Sobolev's embedding, \eqref{es:remainder2},
\eqref{es:boundedness1cHd} and the a priori estimate applied to 
$\norm{A_1 + A_2 + \nabla\psi}{}{L^n}$ imply that
\begin{equation}
\begin{aligned}
\bigg| \int_{B} e^{i \varphi} \big(A_1^2 - (A_2+\nabla\psi)^2\big) &u_1 \overline{u_2}dx \bigg| \\
& \lesssim \norm{A_1 - (A_2 + \nabla\psi)}{}{L^n} h^{-4/(\varepsilon + 2)}.
\end{aligned}
\label{es:magne2}
\end{equation}
Because of the same reasons we have
\begin{equation}
\begin{aligned}
\bigg| \int_{B} e^{i \varphi} (A_1 - A_2 - \nabla \psi) \cdot \nabla \varphi \,& u_1 \overline{u_2} \,dx \bigg| \\
& \lesssim \norm{A_1 - (A_2 + \nabla\psi)}{}{L^n} h^{-4/(\varepsilon + 2)}.
\end{aligned}
\label{es:magne3}
\end{equation}
By elementary interpolation, we know that
\begin{equation*}
\|A_1 - (A_2 + \nabla\psi)\|_{L^n} 
 \leq \norm{A_1 - (A_2 + \nabla\psi)}{\theta}{L^2} \norm{A_1 - (A_2 + \nabla\psi)}{1 - \theta}{L^p},
\end{equation*}
where $ p $ is chosen to satisfies $ 1/n = \theta/2 + (1 - \theta)/p $. Note that $ p > n $. Now estimates \eqref{es:boundHMFdecompoSTAB} and \eqref{es:HODGEgaugeSTAB} imply that
\begin{equation}
\|A_1 - (A_2 + \nabla\psi)\|_{L^n}  \lesssim \norm{d(A_1 - A_2)}{\theta}{H^{-1} \boldsymbol\Omega^2 (B)}. \label{es:interMAGNE}
\end{equation}

Recall that $\varphi = \chi (\psi - \psi^\ast)$ and set $\varphi' = (1 - \chi) (\psi - \psi^\ast)$. Since $\nabla \psi = \nabla \varphi + \nabla \varphi'$ we get, by Lemma \ref{lem:gaugeINVA}, that
\begin{align*}
I &\lesssim \dist(C_1, C_2) \norm{u_1}{}{H^1(\Omega)} \norm{u_2}{}{H^1(\Omega)}  \\
&+ \bigg| \int_B i e^{i \varphi} \nabla \varphi' \cdot (u_1 \nabla \overline{u_2} - \overline{u_2} \nabla u_1) \, dx + e^{i \varphi} (\nabla \varphi  + \nabla \varphi') \cdot \nabla \varphi' u_1 \overline{u_2} \, dx \bigg|
\end{align*}
The same arguments we used to estimate \eqref{es:magne1}, \eqref{es:magne2} and \eqref{es:magne3} yield
\begin{equation}
I \lesssim \dist(C_1, C_2) \norm{u_1}{}{H^1(\Omega)} \norm{u_2}{}{H^1(\Omega)} + \norm{\nabla \varphi'}{}{L^n} h^{-(\varepsilon + 4)/(\varepsilon + 2)}
\label{es:psiLn}
\end{equation}
Note that
\begin{equation}
\norm{\nabla \varphi'}{}{L^n} \lesssim \norm{\psi - \psi^\ast}{}{W^{1,n} (B \setminus \ov{B'})}
\label{es:Ipsi}
\end{equation}
and
\begin{equation}
\begin{aligned}
\norm{\psi - \psi^\ast}{}{W^{1,n} (B \setminus \ov{B'})} &\lesssim \norm{\psi - \psi^\ast}{\theta}{H^1 (B \setminus \ov{B'})} \norm{\psi - \psi^\ast}{1 - \theta}{W^{1,p} (B)} \\
& \lesssim \norm{d(A_1 - A_2)}{\theta}{H^{-1} \boldsymbol\Omega^2 (B)}
\end{aligned} \label{es:psipsiast}
\end{equation}
by H\"older's inequality, \eqref{es:outB'STAB} and \eqref{es:boundHMFdecompoSTAB}.

Thus, \eqref{es:originq1q2}, \eqref{es:magne1}, \eqref{es:magne2},  \eqref{es:magne3}, \eqref{es:interMAGNE}, \eqref{es:psiLn}, \eqref{es:uH1}, \eqref{es:Ipsi} and \eqref{es:psipsiast} imply
\begin{align*}
    \bigg| \int_{B}  e^{i \varphi} (q_1 &- q_2) u_1 \ov{u_2} \, dx \bigg| \\
    & \lesssim \dist(C_1, C_2) e^{c/h} + \norm{d(A_1 - A_2)}{\theta}{H^{-1}
    \boldsymbol\Omega^2 (B)} h^{-(\varepsilon + 4)/(\varepsilon + 2)}.
\end{align*}
Note that this $ c $ denotes a different constant that the one in the statement.
By Proposition \ref{dAStab} we have
\begin{equation}
\begin{aligned}
    \bigg| \int_{B} e^{i \varphi} (q_1 - q_2) u_1 \ov{u_2} \, dx \bigg| &\lesssim \dist(C_1, C_2) e^{c/h} \\
    +& \big| \log \dist(C_1,C_2) \big|^{- \tilde{c}\theta \varepsilon^2 / n} h^{-(\varepsilon + 4)/(\varepsilon + 2)}.
\end{aligned} \label{es:q_1-q_2_es1}
\end{equation}
On the other hand, using \eqref{es:phi_sharp} and \eqref{es:remainder2} we see that
\begin{equation}
	\bigg| \int_{B} e^{i \varphi} (q_1 - q_2) e^{i\xi \cdot x} e^{\Phi_1^\sharp + \ov{\Phi_2^\sharp}} \, dx \bigg|    
    \lesssim \bigg| \int_{B} e^{i \varphi} (q_1 - q_2) u_1 \ov{u_2} \, dx \bigg| + h^{\varepsilon / (\varepsilon + 2)} \label{es:q_1-q_2_es2}
\end{equation}
since $ \zeta_1 + \ov{\zeta_2} = h i \xi $. Moreover,
\begin{align} \label{es:estoSEacaba}
\bigg| \int_{B} (q_1 - q_2) e^{i\xi \cdot x} \, dx \bigg|
    \lesssim \bigg| &\int_{B} (q_1 - q_2) e^{i\xi \cdot x} ( 1 - e^{\Phi_1 + \ov{\Phi_2} 
    + i \varphi}) \, dx \bigg| \nonumber\\
+ \bigg| &\int_{B} e^{i \varphi} (q_1 - q_2) e^{i\xi \cdot x} ( e^{\Phi_1 +
\ov{\Phi_2}} - e^{\Phi_1^\sharp + \ov{\Phi_2^\sharp}}) \, dx \bigg|\\
    + \bigg| &\int_{B} e^{i \varphi} (q_1 - q_2) e^{i\xi \cdot x} e^{\Phi_1^\sharp
    + \ov{\Phi_2^\sharp}} \, dx \bigg|. \nonumber
\end{align}

The first term on the right hand side of \eqref{es:estoSEacaba} can be
controlled by
\[ \norm{\Phi_1 + \ov{\Phi_2} + i \varphi}{}{L^2} \]
using \eqref{in:exponential}, \eqref{es:phi} and \eqref{es:boundedness1cHd}. Furthermore, using \eqref{eq:deltabar}, we see that
\[ (\mu_1 + i \mu_2) \cdot \nabla(\Phi_1 + \ov{\Phi_2} + i\varphi) = i (\mu_1 + i \mu_2) \cdot (A_2 + \nabla \varphi - A_1). \]
Since $\varphi$ vanishes on $\partial B$, it can be extended by zero out of $B$. Thus, by the boundedness of $((\mu_1 + i \mu_2) \cdot \nabla)^{-1}$ in weighted $L^2$ spaces in $\R^n$ (see Lemma 3.1 in \cite{SyU}), we get
\begin{align*}
\norm{\Phi_1 + \ov{\Phi_2} + i \varphi}{}{L^2} &\lesssim \norm{A_2 + \nabla
\varphi - A_1}{}{L^2} \\
&\lesssim \norm{A_2 + \nabla \psi - A_1}{}{L^2} + \norm{\psi -
\psi^\ast}{}{H^1(B \setminus \ov{B'})}\\
& \lesssim \big|  \log \dist(C_1,C_2) \big|^{-\tilde{c}\varepsilon^2 / n}.
\end{align*}
The last inequality holds because of \eqref{es:HODGEgaugeSTAB}, \eqref{es:outB'STAB} and Proposition \ref{dAStab}.

The second term  on the right hand side of \eqref{es:estoSEacaba} can be estimated by $ h^{\varepsilon / (\varepsilon + 2)} $ using \eqref{in:exponential}, \eqref{es:PhiPhiSharp}, \eqref{es:phi_sharp} and \eqref{es:phi}. Therefore,
\begin{equation}
\begin{aligned}
\bigg| \int_{B} (q_1 - q_2) e^{i\xi \cdot x} \, dx \bigg| \lesssim & \big|
    \log \dist(C_1,C_2) \big|^{-\tilde{c}\varepsilon^2 / n} + h^{\varepsilon /
    (\varepsilon + 2)} \\
& + \bigg| \int_{B} e^{i \varphi} (q_1 - q_2) e^{i\xi \cdot x} e^{\Phi_1^\sharp
+ \ov{\Phi_2^\sharp}} \, dx \bigg|.
\end{aligned} \label{es:q_1-q_2_es3}
\end{equation}
Now the result follows directly from \eqref{es:q_1-q_2_es3}, \eqref{es:q_1-q_2_es2} and \eqref{es:q_1-q_2_es1}.
\end{proof}

We next derive the stability estimate for the difference of the electric potentials in the Sobolev space $ H^{-\lambda}(\R^n) $ with $ \lambda > 0 $ using the equivalent norm given by
\[
    \|u\|^2_{H^{-\lambda}(\R^n)} = \int_{\R^n} (1+|\xi|^2)^{-\lambda} |\widehat u(\xi)|^2 \, d\xi
\]
for functions.

\begin{proposition}\label{dqStab} \sl
Consider $ \lambda \in (0, 1] $ and $ \theta \in (0, 2/n) $. There exist constants $0 < \tilde{c} < 1$ universal and $ c > 1 $ depending on $ \Omega, B, n $ and $ \theta $ such that 
\[
    \|  q_1- q_2  \|_{H^{-\lambda}(\R^n)}  
    \lesssim 
	\big|  \log \dist(C_1,C_2) \big|^{- \tilde{c} \theta \varepsilon^3 \lambda / n^2},
\]
provided that
\[ \big|  \log \dist(C_1,C_2) \big|^{-3 \tilde{c} \theta \varepsilon^2 /n}\leq 1 / c \min(1, h_0(cM)). \]
Note that the implicit constant above also depends on $\lambda $ and $ \theta $.
\end{proposition}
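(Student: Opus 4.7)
The plan is to mimic the frequency-split argument used to derive Proposition \ref{dAStab} from Lemma \ref{dAFourier}, this time starting from Lemma \ref{qFourier}. Writing $B_\rho$ for the ball of radius $\rho$ centred at the origin and abbreviating $D := |\log \dist(C_1,C_2)|^{-1}$, I would split
\[
    \norm{q_1-q_2}{2}{H^{-\lambda}(\R^n)} = \int_{B_\rho} (1+|\xi|^2)^{-\lambda}|\widehat{q_1-q_2}|^2 \, d\xi + \int_{B_\rho^c} (1+|\xi|^2)^{-\lambda}|\widehat{q_1-q_2}|^2 \, d\xi.
\]
The tail is controlled by $\rho^{-2\lambda} \norm{q_1-q_2}{2}{L^2(\R^n)}\lesssim \rho^{-2\lambda}$, using Plancherel together with the uniform $L^\infty$ bound on $q_1,q_2$ and the fact that both are supported in $\ov\Omega$. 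On the bulk, Lemma \ref{qFourier} applied pointwise in $\xi \in B_\rho$ yields, for every admissible $h$,
\[
    \int_{B_\rho} (1+|\xi|^2)^{-\lambda} |\widehat{q_1-q_2}|^2 \, d\xi \lesssim \rho^n\Bigl[\dist(C_1,C_2)^2 e^{2c/h} + D^{2\tilde c\theta\varepsilon^2/n} h^{-5} + h^{2\varepsilon/(\varepsilon+2)}\Bigr].
\]

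The key new difficulty, not present in the proof of Proposition \ref{dAStab}, is the middle term, which carries a negative power of $h$. Consequently $h$ cannot be taken as small as in the magnetic case. I would choose $h = D^\alpha$ with $\alpha \in (0,1)$ to be fixed; then the first term becomes super-polynomially small in $D$, while the last two become $D^{2\tilde c\theta\varepsilon^2/n - 5\alpha}$ and $D^{2\alpha\varepsilon/(\varepsilon+2)}$. Balancing these two exponents forces
\[
    \alpha \;=\; \frac{2\tilde c\theta\varepsilon^2/n}{5 + 2\varepsilon/(\varepsilon+2)} \;\simeq\; \frac{\tilde c\theta\varepsilon^2}{n},
\]
at which both terms are equal to a common power $D^{\kappa}$ with $\kappa \simeq \tilde c\theta\varepsilon^3/n$. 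The standing hypothesis $|\log\dist(C_1,C_2)|^{-3\tilde c\theta\varepsilon^2/n} \leq c^{-1}\min(1, h_0(cM))$ is precisely what is needed to keep $h = D^\alpha$ in the admissible range required by Lemma \ref{qFourier}.

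Finally I would balance bulk against tail by setting $\rho = D^{-\kappa/(n+2\lambda)}$, which also respects the constraint $h\rho \leq 2$ since the exponent of $D$ in $h\rho$ remains strictly positive. This yields
\[
    \norm{q_1-q_2}{2}{H^{-\lambda}(\R^n)} \lesssim D^{2\lambda\kappa/(n+2\lambda)} \simeq \bigl|\log\dist(C_1,C_2)\bigr|^{-2\tilde c\theta\varepsilon^3\lambda/n^2},
\]
after absorbing the universal factor $(n+2\lambda)/n$ into $\tilde c$; square-rooting then produces the asserted bound. The main obstacle will be this two-step balancing, where the competition between the positive power $h^{2\varepsilon/(\varepsilon+2)}$ and the $D^{2\tilde c\theta\varepsilon^2/n} h^{-5}$ term (the latter itself already containing the log-rate from Proposition \ref{dAStab}) is the reason the final exponent carries an extra factor of order $\varepsilon\lambda/n$ compared with the magnetic-field case.
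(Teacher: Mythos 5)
Your proposal is correct and takes essentially the same approach as the paper: split the $H^{-\lambda}$-norm over a ball of radius $\rho$ and its complement, bound the tail by $\rho^{-2\lambda}$ using the a priori bounds, bound the bulk by $\rho^n$ times the square of the estimate in Lemma \ref{qFourier}, and then optimize the two parameters $h$ and $\rho$ in terms of $|\log\dist(C_1,C_2)|$. The only (cosmetic) difference is the order of optimization --- the paper first sets $\rho = h^{-2\varepsilon/((2+\varepsilon)(n+2\lambda))}$ to balance the non-logarithmic bulk term against the tail and then picks $h = c'\,|\log\dist(C_1,C_2)|^{-\tilde c \theta \varepsilon^2/(6n)}$, whereas you balance the two $h$-dependent bulk terms first and then choose $\rho$ --- but both orderings give the same $\theta\varepsilon^3\lambda/n^2$-order exponent after adjusting the universal constant $\tilde c$.
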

\begin{proof}
Let $B_\rho$ be denote the ball centred at $ 0 \in \R^n $ of radius $ \rho \geq 1 $ and let $B_\rho^c$ denote its complement in $ \R^n $. Let $q$ denote $q_1-q_2$ for clarity.
Using Lemma \ref{qFourier} we may estimate
\begin{equation*}
\begin{aligned}
    \int_{B_\rho} 
    \frac{\big|\widehat{q}(\xi)\big|^2}{(1+|\xi|^2)^\lambda} d\xi \lesssim \rho^n \big(& \dist(C_1,C_2) e^{c/h} + h^{\varepsilon/(\varepsilon + 2)} \\
    & + \big| \log \dist(C_1,C_2) \big|^{- \tilde{c}\theta \varepsilon^2 / n} h^{-5/2} \big)^2
\end{aligned}
\end{equation*}
for all $ h \leq \min(1, 2/\rho, h_0(cM)) $. Note that $ c $ and $\tilde{c}$ here are different to the ones in the statement. On the other hand,
\begin{equation*}
    \int_{B_\rho^c}
    \frac{\big|\widehat{q}(\xi) \big|^2}{(1+|\xi|^2)^\lambda} d\xi \lesssim
    \rho^{-2\lambda}
\end{equation*}
Choosing
\[ \rho = h^{- \frac{2 \varepsilon}{(2 + \varepsilon)(n + 2\lambda)}}, \]
we get
\begin{align*}
    \|  q  \|^2_{H^{-\lambda}(\R^n)} 
    \lesssim  &\dist(C_1,C_2)^2 e^{c'/h} 
    + h^{4\varepsilon \lambda / ((\varepsilon+2)(2\lambda + n))} \\
    & + \big| \log \dist(C_1,C_2) \big|^{- 2 \tilde{c} \theta \varepsilon^2 / n} h^{-6}
\end{align*}
for $ c' > 2c $. Note that this choice of $ \rho $ satisfies the restriction $ \rho \leq 2/h $.
Finally, we set
\[ h = c' \big| \log \dist(C_1, C_2) \big|^{-\tilde{c} \theta \varepsilon^2 /(6 n)} \]
to prove the statement.
\end{proof}

We next derive the stability estimate for the difference of the electric potentials in the Besov space $ B^{2, r}_0 (\R^n) $ with $ r \in [1, + \infty) $ or $ r = \infty $. We use for that the equivalent norm given by
\[ \norm{u}{r}{B^{2, r}_0 (\R^n)} = \sum_{j \in \N} \norm{\Delta_j u}{r}{L^2 (\R^n)} \]
for $ r \in [1, + \infty) $ and by
\[ \norm{u}{}{B^{2, \infty}_0 (\R^n)} = \sup_{j \in \N} \norm{\Delta_j u}{}{L^2 (\R^n)} \]
for $ r = \infty $. The family of operators $ \{ \Delta_j \}_{j \in \N} $ was described right before Proposition \ref{prop:dAbesov}.

In order to ensure the stability for the electric potentials in $ B^{2, r}_0 (\R^n) $, we will assume that $ q_j \in B^{2,r}_\varepsilon (\R^n) $ and
\[ \norm{q_j}{}{B^{2,r}_\varepsilon (\R^n)} \leq M \]
for $ j \in \{ 1, 2 \} $.
\begin{proposition} \label{prop:qSTAB} \sl
Let $ \theta $ belong to $ (0, 2/n) $ and consider $ r \in [1, +\infty) $ or $
r = \infty $. There exist constants $0 < \tilde{c} < 1$
\textcolor{red}{universal} and $ c > 1 $ depending on $ \Omega, B, n
$ and $ \theta $ such that
\[
    \|  q_1- q_2  \|_{B^{2, r}_0(\R^n)}  
    \lesssim 
	\big|  \log \dist(C_1,C_2) \big|^{-\tilde{c} \theta \varepsilon^4 / n^2},
\]
provided that
\[ \big|  \log \dist(C_1,C_2) \big|^{-36 \tilde{c} \theta \varepsilon^2 / n}\leq 1 / c \min(1, h_0(cM)) \]
Note that the implicit constant above also depends on $ \theta $.
\end{proposition}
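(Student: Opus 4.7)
The plan is to mimic the strategy from Proposition \ref{prop:dAbesov}, splitting the dyadic blocks $\Delta_j(q_1-q_2)$ at a threshold $k\in\N$ that will be optimised at the end: high-frequency blocks $j>k$ are controlled by the a priori bound on $\norm{q_j}{}{B^{2,r}_\varepsilon(\R^n)}$, and low-frequency blocks $j\le k$ are controlled by the pointwise Fourier estimate of Lemma \ref{qFourier}. Writing $q=q_1-q_2$, for $j>k$ we use the Littlewood-Paley characterization of $B^{2,r}_\varepsilon$ to write $\norm{\Delta_j q}{}{L^2}\le 2^{-j\varepsilon}\bigl(2^{j\varepsilon}\norm{\Delta_j q}{}{L^2}\bigr)$, and summing the $r$-th powers (or taking the sup when $r=\infty$) gives
\[
    \Bigl(\sum_{j>k}\norm{\Delta_j q}{r}{L^2}\Bigr)^{1/r} \lesssim 2^{-k\varepsilon}\norm{q}{}{B^{2,r}_\varepsilon(\R^n)}\lesssim 2^{-k\varepsilon}.
\]

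For $j\le k$, since $\widehat{\psi_j}$ is supported in $\{|\xi|\le 2^{j+1}\}$, I can estimate
\[
    \norm{\Delta_j q}{}{L^2}\lesssim 2^{jn/2}\sup_{|\xi|\le 2^{j+1}}|\widehat{q}(\xi)|,
\]
and apply Lemma \ref{qFourier} (valid as long as $h\le 2^{-k}$, which restricts $k$ from below in terms of $h$). This yields
\[
    \norm{\Delta_j q}{}{L^2}\lesssim 2^{kn/2}\Bigl[\dist(C_1,C_2)e^{c/h}+\bigl|\log\dist(C_1,C_2)\bigr|^{-\tilde c\,\theta\varepsilon^2/n}h^{-5/2}+h^{\varepsilon/(\varepsilon+2)}\Bigr].
\]
Summing over $j\le k$ absorbs an additional polynomial factor in $k$ into the logarithm, so
\[
    \norm{q}{}{B^{2,r}_0(\R^n)}\lesssim 2^{kn/2}\Bigl[\dist e^{c/h}+\bigl|\log\dist\bigr|^{-\tilde c\,\theta\varepsilon^2/n}h^{-5/2}+h^{\varepsilon/(\varepsilon+2)}\Bigr]+2^{-k\varepsilon}.
\]

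Now I optimise. Taking $h=c'\bigl|\log\dist(C_1,C_2)\bigr|^{-\beta}$ with a small positive $\beta$, the term $\dist\cdot e^{c/h}$ becomes $\dist^{\,1-\mathcal{O}(|\log\dist|^{\beta-1})}$, hence negligible provided $\beta<1$ and $c'$ large enough; the magnetic term becomes $|\log\dist|^{5\beta/2-\tilde c\theta\varepsilon^2/n}$, which is negative only when $\beta<2\tilde c\theta\varepsilon^2/(5n)$; and the third term becomes $|\log\dist|^{-\beta\varepsilon/(\varepsilon+2)}$. Balancing the magnetic and regularisation contributions forces $\beta\sim\tilde c\,\theta\varepsilon^2/n$, after which both contribute a factor $|\log\dist|^{-c_1\theta\varepsilon^3/n}$ with $c_1$ universal. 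Finally I choose $k$ so that $2^{kn/2}|\log\dist|^{-c_1\theta\varepsilon^3/n}=2^{-k\varepsilon}$, i.e.\ $2^k\sim|\log\dist|^{2c_1\theta\varepsilon^3/(n(n+2\varepsilon))}$; this yields $\norm{q}{}{B^{2,r}_0(\R^n)}\lesssim|\log\dist(C_1,C_2)|^{-\tilde c\,\theta\varepsilon^4/n^2}$ as desired, after adjusting $\tilde c$ to absorb all universal constants.

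The main obstacle is bookkeeping: the factor $h^{-5/2}$ appearing in Lemma \ref{qFourier} (coming from the $H^1$-norm of the CGOs when computing the cross terms in the integral identity of Lemma \ref{lem:gaugeINVA}) prevents the naive choice $h=c'|\log\dist|^{-1}$ used for the magnetic stability estimate from working. One must take $h$ as a strictly sub-logarithmic power of $|\log\dist|^{-1}$, which in turn is what forces the much smaller final exponent $\varepsilon^4/n^2$ compared with $\varepsilon^2/n$ in Proposition \ref{prop:dAbesov}. The admissibility hypothesis in the statement of Proposition \ref{prop:qSTAB} (the condition $|\log\dist|^{-36\tilde c\,\theta\varepsilon^2/n}\le 1/c\min(1,h_0(cM))$) is exactly what guarantees that this choice of $h$ lies in the range where Lemma \ref{qFourier} applies.
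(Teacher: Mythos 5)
Your proof is correct and follows essentially the same approach as the paper's: a dyadic split at threshold $k$, the a priori $B^{2,r}_\varepsilon$ bound on $q_j$ for the high frequencies, Lemma \ref{qFourier} for the low frequencies, and a two-parameter optimisation in $h$ and $k$ leading to the sub-logarithmic choice $h\sim|\log\dist|^{-\beta}$ with $\beta\sim\theta\varepsilon^2/n$. The paper fixes $k$ as a function of $h$ before picking $h$, while you choose $h$ first and then $k$; this is the same optimisation in a different order and yields the same exponent $\theta\varepsilon^4/n^2$.
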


\begin{proof} Let $q$ denote $q_1-q_2$ for clarity. Consider $ k \in \N $ to be chosen later. For any $ j \in \N $ such that $ j \leq k $ we have by Lemma \ref{qFourier} that
\begin{align*}
\norm{\Delta_j q}{}{L^2 (\R^n)} \lesssim 2^{k n/2} \big(& \dist(C_1,C_2)
    e^{c/h} + h^{\varepsilon/(\varepsilon + 2)} \\
& + \big| \log \dist(C_1,C_2) \big|^{- \tilde{c} \theta \varepsilon^2 / n} h^{-5/2} \big)
\end{align*}
for all $ h \leq \min(2^{-k}, h_0(c M)) $. Note that $ c $ and $\tilde{c}$ do not denote the ones in the statement. On the other hand, if $ j > k $, then
\[ \norm{\Delta_j q}{}{L^2 (\R^n)} \leq 2^{-k \varepsilon} 2^{j \varepsilon} \norm{\Delta_j q}{}{L^2 (\R^n)}. \]
Thus,
\begin{align*}
\norm{q}{}{B^{2, r}_{0} (\R^n)} \lesssim 2^{k n} \big(& \dist(C_1,C_2) e^{c/h}
    + h^{\varepsilon/(\varepsilon + 2)} \\
& + \big| \log \dist(C_1,C_2) \big|^{- \tilde{c} \theta \varepsilon^2 / n} h^{-5/2} \big) + 2^{-k \varepsilon}
\end{align*}
for all $ h \leq \min(2^{-k}, h_0(c M)) $. Now choosing $ k \in \N $ such that
\[ 2^{-(k+1)} < h^\frac{\varepsilon}{(\varepsilon + 2)(\varepsilon + n)} \leq 2^{-k} \]
we know that there exists a constant $ c' > 0 $ such that
\begin{align*}
\norm{q}{}{B^{2, r}_{0} (\R^n)} \lesssim & \dist(C_1,C_2) e^{c'/h} +
    h^{\varepsilon^2/((\varepsilon + 2)(\varepsilon + n))} \\
& + \big| \log \dist(C_1,C_2) \big|^{- \tilde{c} \theta \varepsilon^2 / n} h^{-3}.
\end{align*}
Note that the choice of $ k $ satisfies the restriction $ h \leq 2^{-k} $. Finally, we set
\[ h = 2 c' |\log \dist(C_1, C_2)|^{-\tilde{c} \theta \varepsilon^2 / (6 n)} \]
to prove the statement.
\end{proof}


\section{Estimating the co-exact part of the magnetic potential}\label{sec:keyPOINT}
This section is devoted to proving Proposition \ref{prop:keyPOINT},
by giving the Hodge type decomposition $A_1-A_2 = \delta F + \nabla \psi$. The proof
will be split in to two lemmas. The first lemma gives the above decomposition and the rough 
idea is to
choose the exact part $\nabla \psi $ in such a way that it is the sum
of the exact component of a Hodge decomposition of $A_1 - A_2$ which vanishes
on $\partial B$ and the exact expression of its harmonic component. The other lemma is then
devoted to showing that we can estimate the norm of the co-exact part $\delta F$, 
by the norm of $dA_1-dA_2$.

We want to point out that the decomposition given in the first lemma,
Lemma \ref{lem:HODGEdecomposition}, is slightly 
different from the 
usual Hodge-Morrey-Friedrichs decomposition in bounded domains with smooth
boundaries (see for example \cite{S}). This decomposition usually has a
harmonic component whose norm might be difficult to estimate. However, in our
case we are dealing with a domain with a straightforward topology, i.e. a ball, 
and the harmonic part can be written as an exact form and its norm can be controlled. 

Another consequence of the simple topology we are dealing with, is that
the spaces
\begin{align*}
& \mathcal{H}^1_D(B) = \{ v \in H^1\boldsymbol\Omega^1 (B): d v = 0, \delta v = 0, \mathbf{t} v = 0 \} \\
& \mathcal{H}^1_N(B) = \{ v \in H^1\boldsymbol\Omega^1 (B): d v = 0, \delta v = 0, \mathbf{n} v = 0 \}
\end{align*}
are just the trivial ones, that is, $\mathcal{H}^1_D(B) = \mathcal{H}^1_N(B) =
\{ 0 \}$. Here $\mathbf{t} v$ and $\mathbf{n} v$ denote the tangential and
normal components\footnote{More details about tangential and normal components
can be found in \cite{S}.} of $v$ on $\partial B$. Indeed, if we take $v$
either in $\mathcal{H}^1_D(B) $ or in $ \mathcal{H}^1_N(B)$, we know, by
Theorem 2.2.6 (c) or by Theorem 2.2.7 (a) in \cite{S}, that $v$ is smooth. By
Poincar\'e's lemma for closed smooth forms on contractible domains in $\R^n$, we
have that $v = dg$ with $g$ a smooth function. In consequence, we have that
$-\Delta g = \delta d g = 0$ satisfying either $\mathbf{t} dg = 0$ or
$\mathbf{n} dg = 0$, which implies that $g$ is constant in $B$. Thus $v = 0$.
The fact that $\mathcal{H}^1_D(B) = \mathcal{H}^1_N(B) = \{ 0 \}$ will be
relevant when referring to the results in \cite{S} since some arguments will
become simpler (it is here where the topology of $B$ is playing its role).

\begin{lemma} \label{lem:HODGEdecomposition} \sl Let $A_1, A_2 \in L^\infty
\boldsymbol\Omega^1 (B)$ denote the $1$-forms representing the magnetic
potentials. Then there exist $\psi \in W^{1, p} (B)$ and $F \in W^{1,p}
\boldsymbol\Omega^2 (B)$ such that
\begin{equation}
A_1 - A_2 = d\psi + \delta F, \label{for:decomposition}
\end{equation}
$\mathbf{n}F = 0$ and
 \begin{equation}
\norm{\psi}{}{W^{1,p}(B)} \lesssim \norm{A_1 - A_2}{}{L^p \boldsymbol\Omega^1 (B)} \label{es:boundHMFdecompo}
\end{equation}
for all $ p \geq 2 $. Here $\mathbf{n} F$ denote the normal component of $F$ on
$\partial B$ and $\delta$ the co-differential.

Moreover, if there exists a ball $B'$ containing $\supp A_j$ with $j \in \{ 1, 2 \}$ and such that $\ov{B'} \subset B$, then
\begin{equation*}
\norm{\psi - \psi^\ast}{}{W^{1, p}(B \setminus \ov{B'})} \lesssim \norm{\delta F}{}{L^p \boldsymbol\Omega^1 (B)},
\end{equation*}
where $\psi^\ast$ denotes the average of $\psi$ in $B \setminus \overline{B'}$.
\end{lemma}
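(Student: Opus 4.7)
The plan is to construct $\psi$ as the scalar potential of the exact part of a Hodge-type decomposition of $A_1 - A_2$, and to read off $F$ from the co-closed remainder. First, I would take $\psi \in W^{1,p}(B)$ to be the unique zero-mean weak solution of the Neumann problem
\[ \int_B \nabla \psi \cdot \nabla \varphi\, dx = \int_B (A_1 - A_2) \cdot \nabla \varphi\, dx, \qquad \varphi \in W^{1,p'}(B). \]
For $p = 2$ this follows from Lax-Milgram on $W^{1,2}(B)/\mathbb{R}$ (the compatibility condition being the divergence theorem applied to $A_1-A_2$), and for general $p \in (1,\infty)$ the Calder\'on-Zygmund $L^p$-theory for the Neumann Laplacian on the smooth ball $B$ supplies the estimate $\norm{\nabla \psi}{}{L^p(B)} \lesssim \norm{A_1 - A_2}{}{L^p\boldsymbol\Omega^1(B)}$. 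Since $A_1 - A_2 \in L^\infty(B)$, the same $\psi$ serves every $p \geq 2$; applying Poincar\'e-Wirtinger to the zero-mean $\psi$ then upgrades the gradient bound to \eqref{es:boundHMFdecompo}.

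Next, I would set $\omega := A_1 - A_2 - d\psi$. The variational equation above is precisely the statement that $\omega$ is weakly co-closed in $B$ with vanishing weak normal trace on $\partial B$, i.e.\ $\delta \omega = 0$ and $\mathbf{n}\omega = 0$ in the distributional sense. As recalled in the discussion preceding the lemma, the topological triviality of $B$ forces $\mathcal{H}^1_N(B) = \{0\}$. Invoking the $L^p$ Hodge-Morrey-Friedrichs decomposition from \cite{S}, such a co-closed form with zero normal trace must lie in the co-exact component, so $\omega = \delta F$ for some $F \in W^{1,p}\boldsymbol\Omega^2(B)$ with $\mathbf{n}F = 0$ and $\norm{F}{}{W^{1,p}\boldsymbol\Omega^2(B)} \lesssim \norm{\omega}{}{L^p\boldsymbol\Omega^1(B)}$. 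Combined with the first step this yields \eqref{for:decomposition}.

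For the moreover claim, suppose $B' \supset \supp A_j$ with $\overline{B'} \subset B$. On the annulus $B \setminus \overline{B'}$ we have $A_1 - A_2 = 0$, so \eqref{for:decomposition} reduces there to $d\psi = -\delta F$; in particular,
\[ \norm{\nabla \psi}{}{L^p(B \setminus \overline{B'})} \leq \norm{\delta F}{}{L^p\boldsymbol\Omega^1(B)}. \]
Since $B \setminus \overline{B'}$ is a connected smooth annulus, Poincar\'e-Wirtinger delivers $\norm{\psi - \psi^\ast}{}{L^p(B \setminus \overline{B'})} \lesssim \norm{\nabla \psi}{}{L^p(B \setminus \overline{B'})}$ with $\psi^\ast$ the mean of $\psi$ on the annulus, and concatenating the two inequalities yields the desired estimate.

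The main obstacle is the construction of $F$ in the second step: one must simultaneously obtain $\delta F = \omega$, enforce $\mathbf{n}F = 0$, and retain a linear $W^{1,p}$-bound in terms of $\norm{\omega}{}{L^p}$. This is precisely where the simple topology of $B$ is essential, since it forces $\mathcal{H}^1_N(B) = \{0\}$ and makes the relevant Hodge-Morrey-Friedrichs decomposition collapse to the clean two-summand form $L^p\boldsymbol\Omega^1(B) = dW^{1,p}(B) \oplus \{\delta F : F \in W^{1,p}\boldsymbol\Omega^2(B),\ \mathbf{n}F = 0\}$; the remaining steps reduce to standard Lax-Milgram, Calder\'on-Zygmund, and Poincar\'e-Wirtinger inputs.
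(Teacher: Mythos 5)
Your proof is correct, and it reaches the same decomposition as the paper but by a more streamlined route. The paper follows Schwarz's machinery literally: decompose $A_1 - A_2$ into the three $L^2$-orthogonal Hodge--Morrey summands $dg \in \mathcal{E}^1$, $\delta f \in \mathcal{C}^1$ and a harmonic field $w$, use the triviality of $\mathcal{H}^2_N(B)$ to show $w = dh$ for some $h \in W^{1,p}(B)$, set $\psi = g + h$, and then invoke the open mapping theorem on the Dirichlet and Neumann potential operators to extract the quantitative $W^{1,p}$ bound. You instead extract the entire exact part of $A_1 - A_2$ in one step by solving the scalar Neumann problem, read off $W^{1,p}$ control from the Calder\'on--Zygmund theory for the Neumann Laplacian, and identify the remainder $\omega = A_1 - A_2 - d\psi$ (weakly co-closed with vanishing normal trace) with $\mathcal{C}^1$ via the triviality of $\mathcal{H}^1_N(B)$. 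Both routes rest on precisely the same two pillars --- the vanishing of the harmonic fields on the ball and $L^p$ elliptic regularity for the (Hodge) Laplacian with natural boundary conditions --- and by uniqueness of such decompositions the $\psi$ you construct coincides (up to a constant) with the paper's $g + h$. Your version is cleaner in that it never exhibits the three-way split and avoids the detour through the open mapping theorem. Two small remarks: the compatibility condition for your Neumann problem is not really ``the divergence theorem applied to $A_1 - A_2$'' --- it is automatic because the variational right-hand side $\int_B (A_1 - A_2)\cdot\nabla\varphi\,dx$ vanishes for constant $\varphi$, no integration by parts needed. And you correctly record $d\psi = -\delta F$ on the annulus, whereas the paper's proof has a harmless sign typo ($d\psi = \delta F$); the $L^p$ magnitude is of course unaffected.
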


\begin{proof} 
Through out the proof we will follow most of the notation use in \cite{S}
and we will refer to it several times. 

Let $u $ belong to $ L^p \boldsymbol\Omega^1 (B) $ with $p \geq 2$, we want to
write $u = d g + \delta f + dh $ with $\mathbf{t} g = 0$, $\mathbf{n} f = 0$
and $d h$ harmonic. Since $u$ belongs to $L^2 \boldsymbol\Omega^1 (B)$ and
$\mathcal{H}^1_D(B) = \mathcal{H}^1_N(B) = \{ 0 \}$, we have by Theorem 2.2.4
and Theorem 2.2.7 (b) in \cite{S} that:
\begin{itemize}
\item [(a)] There exists a unique $\phi_D \in H^1 \boldsymbol\Omega^1_D (B)$ (the space of forms in $H^1 \boldsymbol\Omega^1 (B)$ with vanishing tangential components on $\partial B$) such that
\[\int_B \langle d \phi_D, d v \rangle + \delta \phi_D \, \delta v  \, dx = \int_B \langle u, v \rangle \, dx\]
for any $v \in H^1 \Omega^1_D (B)$. The solution $\phi_D$ is usually called the Dirichlet potential of $u$. Note that $u$ is uniquely determined by its Dirichlet potential.

\item [(b)] There exists a unique $\phi_N \in H^1 \Omega^1_N (B)$ (the space of forms in $H^1 \Omega^1 (B)$ with vanishing normal components on $\partial B$) such that
\[\int_B \langle d \phi_N, d v \rangle + \delta \phi_N \, \delta v  \, dx = \int_B \langle u, v \rangle \, dx\]
for any $v \in H^1 \Omega^1_N (B)$. The solution $\phi_N$ is usually called the Neumann potential of $u$. Note that $u$ is uniquely determined by its Neumann potential.
\end{itemize}
By Theorem 2.2.5 (a) and Theorem 2.2.7 (b) in \cite{S}, we know that $\phi_D$ and $\phi_N$ belong to $H^2 \boldsymbol\Omega^1 (B)$. Moreover, integrating by parts in (a) and (b) above, we see that $\mathbf{t} \delta \phi_D = 0$ and $\mathbf{n} d \phi_N = 0$. Finally, by Theorem 2.2.6 (a) and Theorem 2.2.7 (c) in \cite{S}, we know that $\phi_D$ and $\phi_N$ belong to $W^{2, p} \boldsymbol\Omega^1 (B)$ for $p \geq 2$.

Define $g = \delta \phi_D$, $f = d \phi_N$ and $w = u - dg - \delta f$, then $\mathbf{t} g = 0$ , $\mathbf{n} f = 0$ and $w \in (\mathcal{E}^1(B) \oplus \mathcal{C}^1(B))^\perp$ (orthogonality in the sense of $L^2 \boldsymbol\Omega^1 (B)$) with
\begin{equation*}
 \mathcal{E}^1(B) = \{ dv : v \in H^1_0(B) \}, \quad \mathcal{C}^1(B) = \{ \delta v : v \in H^1 \boldsymbol\Omega^2_N (B) \}
\end{equation*}
and $ H^1 \boldsymbol\Omega^2_N (B)$ denoting the space of forms in $H^1
\boldsymbol\Omega^2 (B)$ with vanishing normal components on $\partial B$. The
fact that $w$ is in the orthogonal complement of $\mathcal{E}^1(B) \oplus
\mathcal{C}^1(B)$ is proven in the proof of Lemma 2.4.3 (a) in \cite{S}.
Therefore, we have that $d g \in \mathcal{E}^1(B) \cap L^p \boldsymbol\Omega^1
(B) $, $\delta f \in \mathcal{C}^1(B) \cap L^p \boldsymbol\Omega^1 (B) $ and $w
\in (\mathcal{E}^1(B) \oplus \mathcal{C}^1(B))^\perp \cap L^p
\boldsymbol\Omega^1 (B) $. Furthermore, Theorem 2.4.5 (a) in \cite{S} states
that $(\mathcal{E}^1(B) \oplus \mathcal{C}^1(B))^\perp = L^2 \mathcal{H}^1(B)$,
where $ L^2\mathcal{H}^1(B)$ is defined as the closure of
\[\mathcal{H}^1(B) = \{ v \in H^1\boldsymbol\Omega^1 (B): d v = 0, \delta v = 0 \}\]
in $L^2 \boldsymbol\Omega^1 (B)$.

We next show that $w = d h$ with $h \in W^{1, p} (B)$ (note that this will be possible because of the topology of $B$). Indeed, let $\phi$ denote the Neumann potential of $w$. By the arguments given above, we know that $\phi \in W^{2, p} \boldsymbol\Omega^1 (B) \cap H^1 \boldsymbol\Omega^1_N (B)$ and $\mathbf{n} d \phi = 0$. Defining $h = \delta \phi$ and noting that $w = \delta d \phi + d \delta \phi$, we immediately see that $w - d h = \delta d \phi $. Note that $d \phi \in H^1 \boldsymbol\Omega^2_N (B)$ and satisfies
\[\int_B \langle d(d \phi), dv \rangle + \langle \delta(d \phi), \delta v \rangle \, dx = \int_B \langle w - dh, \delta v\rangle \, dx = 0\]
for all $v \in H^1\boldsymbol\Omega^2_N (B)$ --last identity follows from a density argument together with the Green formula stated in Proposition 2.1.2 in \cite{S}. This means that $d \phi$ is a weak solution of the Hodge-Laplacian with zero Neumann boundary condition and zero right hand side. By Theorem 2.2.7 (b) in \cite{S}, we know that the unique solution for this problem in $\mathcal{H}^2_N (B)^\perp \cap H^1\boldsymbol\Omega^2_N (B) $ is the trivial one. Thus, if $d \phi \in \mathcal{H}^2_N (B)^\perp$ with
\[\mathcal{H}^2_N(B) = \{ v \in H^1\boldsymbol\Omega^2 (B): d v = 0, \delta v = 0, \mathbf{n} v = 0 \},\]
then $d \phi = 0$ and consequently $w = dh$. Finally, the fact that $d \phi $ belongs to $ \mathcal{H}^2_N (B)^\perp$ is a simple consequence of the Green formula stated in Proposition 2.1.2 in \cite{S}.

By now, we know that $u \in L^p \boldsymbol\Omega^1 (B)$ with $p\geq 2$ can be written as
\begin{equation}
u = d g + \delta f + d h \label{id:decomposition_u}
\end{equation}
with $d g \in \mathcal{E}^1(B) \cap L^p \boldsymbol\Omega^1 (B) $, $\delta f \in \mathcal{C}^1(B) \cap L^p \boldsymbol\Omega^1 (B) $ and $d h \in L^2 \mathcal{H}^1(B) \cap L^p \boldsymbol\Omega^1 (B)$. We next want to estimate $g, f$ and $h$ in terms of $u$. This will be achieved using a simple consequence of the open mapping theorem  that can be stated as follows. Let $X$ and $Y$  be two Banach spaces and let $T: X \longrightarrow Y$ be a bounded linear operator. If $T$ is bijective, then the inverse of $T$ is bounded. Let $X_D$ and $X_N$ denote the spaces
\begin{align*}
& X_D = \{ v \in W^{2, p} \boldsymbol\Omega^1 (B) \cap H^1 \boldsymbol\Omega^1_D (B) : \mathbf{t} \delta v = 0 \}, \\
& X_N = \{ v \in W^{2, p} \boldsymbol\Omega^1 (B) \cap H^1 \boldsymbol\Omega^1_N (B) : \mathbf{n} d v = 0 \};
\end{align*}
which endowed with the norm of $W^{2, p} \boldsymbol\Omega^1 (B)$ become Banach spaces. On the other hand, consider $Y_D = Y_N = L^p \boldsymbol\Omega^1 (B) $. Defining the operators $T_D$ and $T_N$ as
\[T_D v = -\Delta v \quad \forall v \in X_D, \qquad T_N v = -\Delta v \quad \forall v \in X_N; \]
we see that they are bounded and linear. Moreover, by the discussion given above about the existence, uniqueness and regularity of the Dirichlet and Neumann potentials respectively, we know that $T_D$  and $T_N$ are bijective whenever $p \geq 2$. Therefore, since $g = \delta \phi_D$, $f = d \phi_N$ and $h = \delta \phi$ with $\phi_D$ and $\phi_N$ the Dirichlet and Neumann potentials for $u$ and $\phi$ the Neumann potential for $w$, we have that
\begin{equation}
\norm{g}{}{W^{1,p}(B)} + \norm{f}{}{W^{1,p} \boldsymbol\Omega^2 (B)} + \norm{h}{}{W^{1,p}(B)} \lesssim \norm{u}{}{L^p \boldsymbol\Omega^1 (B)} \label{es:decomposition_u}.
\end{equation}
Now since $A_1 - A_2 \in L^\infty \boldsymbol\Omega^1 (B)$ the above argument can be performed for all $p \geq 2$ which provides a proof for the first part of the statement.

Finally, the second part of the statement is a simple consequence of Poincar\'e's inequality
(see \cite{E}) and the fact that $d\psi|_{B \setminus \ov{B'}} = \delta F|_{B \setminus \ov{B'}}$ (since $A_j$ is zero outside $\Omega$).
\end{proof}

We now use the properties summarized in Lemma \ref{lem:HODGEdecomposition} and
its proof to derive an estimate for the co-exact part of the decomposition
\eqref{for:decomposition}. One of the key elements of the proof is 
the Friedrich type inequality labelled as \eqref{es:H1ellipticity} below.

\begin{lemma} \label{lem:HODGEgauge} \sl Let $A_1$, $A_2$ and $\psi$ be as in Lemma \ref{lem:HODGEdecomposition} (including the conditions for $\supp A_j$). Then the following estimate holds
\begin{equation}
\norm{A_1 - (A_2 + d \psi)}{}{L^2 \boldsymbol\Omega^1 (B)} \lesssim \norm{d(A_1 - A_2)}{}{H^{-1} \boldsymbol\Omega^2 (B)}. \label{es:HODGEgauge}
\end{equation}
Moreover, if $B'$ and $\psi^\ast$ are as in Lemma \ref{lem:HODGEdecomposition}, we have that
\begin{equation}
\norm{\psi - \psi^\ast}{}{H^1(B \setminus \ov{B'})} \lesssim \norm{d(A_1 - A_2)}{}{H^{-1} \boldsymbol\Omega^2 (B)} \label{es:outB'}.
\end{equation}
\end{lemma}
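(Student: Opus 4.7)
The plan is to reduce \eqref{es:HODGEgauge} to the Hodge-theoretic bound
\[
\|\delta F\|_{L^2\boldsymbol\Omega^1(B)} \lesssim \|d\delta F\|_{H^{-1}\boldsymbol\Omega^2(B)}
\]
for the $F\in H^1\boldsymbol\Omega^2(B)$ produced by Lemma \ref{lem:HODGEdecomposition}, which satisfies $dF=0$ and $\mathbf{n}F=0$. Indeed, that decomposition gives $A_1-(A_2+d\psi)=\delta F$, so the left-hand side of \eqref{es:HODGEgauge} is exactly $\|\delta F\|_{L^2}$, and $d^2\psi=0$ yields $d(A_1-A_2)=d\delta F$, identifying the right-hand side with $\|d\delta F\|_{H^{-1}}$.

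To prove the reduced inequality, the first step is to test $\|\delta F\|_{L^2}^2$ against the identity $\delta F=(A_1-A_2)-d\psi$ and discard the exact piece by Hodge orthogonality: integrating $\int_B \delta F\cdot d\psi\,dx$ by parts moves $\delta$ onto $d\psi$, the interior integrand becomes $F\cdot d^2\psi=0$, and the boundary contribution drops thanks to $\mathbf{n}F=0$, so $\int_B \delta F\cdot d\psi\,dx=0$. Hence $\|\delta F\|_{L^2}^2=\int_B \delta F\cdot(A_1-A_2)\,dx$. The second step is a cutoff trick: pick $\chi\in C^\infty_c(B)$ with $\chi\equiv 1$ on a neighbourhood of $\ov\Omega$, and use the Leibniz formula $\delta(\chi F)=\chi\delta F+R$ in which $R$ is zeroth order in $F$ with coefficients supported in $\{d\chi\neq 0\}$, hence off $\ov\Omega$. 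Since $A_1-A_2$ vanishes off $\ov\Omega$ and $\chi=1$ on its support,
\[
\int_B \delta F\cdot(A_1-A_2)\,dx=\int_B \delta(\chi F)\cdot(A_1-A_2)\,dx=\langle d(A_1-A_2),\chi F\rangle,
\]
the last equality being the defining pairing of the distribution $d(A_1-A_2)\in H^{-1}\boldsymbol\Omega^2(B)$ with the test form $\chi F\in H^1_0\boldsymbol\Omega^2(B)$ (it lies in $H^1_0$ because $\chi$ is compactly supported in $B$). Combining the two steps yields
\[
\|\delta F\|_{L^2}^2 \lesssim \|d(A_1-A_2)\|_{H^{-1}\boldsymbol\Omega^2(B)}\,\|F\|_{H^1\boldsymbol\Omega^2(B)}.
\]

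The proof is closed by the Friedrich-type inequality \eqref{es:H1ellipticity}, which in the situation $dF=0$, $\mathbf{n}F=0$ reads $\|F\|_{H^1}\lesssim\|\delta F\|_{L^2}$; this is the main technical ingredient and expresses the $H^1$-ellipticity of the Hodge Laplacian with Neumann boundary conditions. It is available here without a harmonic-projection correction because $\mathcal{H}^2_N(B)=\{0\}$, which on the ball $B$ follows from the same Poincar\'e-lemma argument used at the beginning of this section to show $\mathcal{H}^1_N(B)=\{0\}$. The inequality \eqref{es:H1ellipticity} itself is to be proved by the compactness argument announced in the introduction: a would-be normalised counterexample sequence converges strongly in $L^2$ and weakly in $H^1$ to a nontrivial element of $\mathcal{H}^2_N(B)$, contradicting its triviality. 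Once this inequality is in hand, absorbing $\|F\|_{H^1}$ into the left-hand side closes \eqref{es:HODGEgauge}, and the second estimate \eqref{es:outB'} is then immediate by combining \eqref{es:HODGEgauge} with the second conclusion of Lemma \ref{lem:HODGEdecomposition} specialised to $p=2$.
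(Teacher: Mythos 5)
Your argument is correct, and it is a genuine streamlining of the paper's proof rather than a rephrasing of it. The paper proceeds by regularisation: it takes a sequence $u_m\in C^\infty_0\boldsymbol\Omega^1(B)$ with $\supp u_m\subset B'$ converging to $A_1-A_2$ in $L^2$, re-runs the Hodge decomposition for each $u_m$ to produce $f_m$, applies the Friedrich inequality \eqref{es:H1ellipticity} (cited from Proposition 2.2.3 in \cite{S}) to $f_m$, bounds $\int_B\langle du_m,v\rangle$ via the same cut-off device you use, and then finishes with a weak compactness argument to pass to the limit and identify $\delta f=\delta F$. The whole smoothing/compactness scaffolding exists only because the paper insists on writing the identity $\int_B\langle df_m,dv\rangle+\langle\delta f_m,\delta v\rangle\,dx=\int_B\langle du_m,v\rangle\,dx$ with $du_m\in L^2$, which is unavailable for $A_1-A_2$ itself. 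You sidestep this entirely by pairing the distribution $d(A_1-A_2)\in H^{-1}$ directly against the $H^1_0$ test form $\chi F$, and by computing $\|\delta F\|_{L^2}^2$ from the decomposition plus the orthogonality $\int_B\langle\delta F,d\psi\rangle\,dx=0$ (which holds since $\mathbf{n}F=0$ and $d^2\psi=0$). You then apply \eqref{es:H1ellipticity} once, to $F$ itself, using $dF=0$ and $\mathcal{H}^2_N(B)=\{0\}$, and absorb. All hypotheses you invoke are available: $F\in W^{1,p}\subset H^1$ with $\mathbf{n}F=0$ places $F$ in the right space, and $\mathcal{H}^2_N(B)=\{0\}$ follows by the same argument the paper uses for $\mathcal{H}^1_N(B)$. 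The only small divergence in attribution: the paper does not prove \eqref{es:H1ellipticity} by a compactness/contradiction argument as you suggest, it cites it from \cite{S}; the ``compactness argument'' the introduction alludes to is the weak subsequence extraction for $\{f_m\}$, which your route does not need at all. Citing \cite{S} directly is of course fine, and makes your proof shorter than both the paper's and your fallback. In short: same key inequality, same cut-off trick, but you avoid the approximating sequence and the limit passage, buying a cleaner argument at no cost.
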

\begin{proof} Note that the second part of the statement is an immediate consequence of Lemma \ref{lem:HODGEdecomposition} and \eqref{es:HODGEgauge}.

The idea to prove the first part is roughly speaking the following: $d (A_1 - A_2) = d \delta F = -\Delta F$ since $F = d \phi_N$, where $\phi_N$ is the Neumann potential of $A_1 - A_2$. Then, one should be able control $\norm{\delta F}{}{L^2 \boldsymbol\Omega^1 (B)}$ by $\norm{d(A_1 - A_2)}{}{H^{-1} \boldsymbol\Omega^2 (B)}$ since $A_1 - A_2$ has compact support inside $B'$. Let us now give a rigorous proof.

Consider a sequence $\{u_m\} \subset C^\infty_0 \boldsymbol\Omega^1 (B)$ converging to $A_1 -A_2$ in $L^2 \boldsymbol\Omega^1 (B)$ such that $\supp u_m \subset B'$ for all $m \in \N$. This is possible because $A_1 - A_2$ vanishes out of $\Omega$. Let $u_m$ be decomposed as in the proof of Lemma \ref{lem:HODGEdecomposition}, that is, $u_m = d (g_m + h_m) + \delta f_m$. Since the decomposition is orthogonal in $L^2 \boldsymbol\Omega^1 (B)$ (see Theorem 2.4.2 in \cite{S}), $\delta f_m$ converges to $\delta F$ in $L^2 \boldsymbol\Omega^1 (B)$ as $m$ goes to infinity. Since $f_m = d\phi^m_N$ with $\phi^m_N$ denoting the Neumann potential of $u_m$, we have that $f_m \in \mathcal{H}^2_N(B)^\perp$ and
\begin{equation}
\begin{aligned}
\int_B \langle d f_m , dv \rangle + \langle \delta f_m , \delta v \rangle \, dx &= \int_B \langle u_m -d(g_m + h_m), \delta v \rangle \, dx \\
& = \int_B \langle d u_m, v \rangle \, dx
\end{aligned} \label{eq:dumCOMPACTsupp}
\end{equation}
for all $v \in H^1 \boldsymbol\Omega^2_N (B)$ --both facts are consequence of the Green formula in Proposition 2.1.2 in \cite{S}. Consider now a cut-off function $\chi \in C^\infty_0 (B)$ such that $\chi (x) = 1$ for all $x \in B'$. Then,
\begin{equation}
\begin{aligned}
\left| \int_B \langle d u_m, v \rangle \, dx \right| &= \left| \int_B \langle d u_m, \chi v \rangle \, dx \right| \\
& \lesssim \norm{d u_m}{}{H^{-1} \boldsymbol\Omega^2 (B)} \norm{v}{}{H^1 \boldsymbol\Omega^2 (B)} \end{aligned}\label{es:dumCOMPACTsupp}
\end{equation}
for all $v \in H^1 \boldsymbol\Omega^2_N (B)$. On the other hand, by
Proposition 2.2.3 in \cite{S} and the fact that the Hodge star operator,
denoted by $\ast$, is an $H^1$-isometry and satisfies $\ast \mathbf{t} =
\mathbf{n} \ast $, we know that
\begin{equation}
\norm{f_m}{2}{H^1 \boldsymbol\Omega^2 (B)} \lesssim \int_B \langle d f_m , d
f_m \rangle + \langle \delta f_m , \delta f_m \rangle \, dx
\label{es:H1ellipticity}
\end{equation}
since $f_m \in \mathcal{H}^2_N(B)^\perp \cap H^1 \boldsymbol\Omega^2_N (B)$. Now by \eqref{es:H1ellipticity}, \eqref{eq:dumCOMPACTsupp}, \eqref{es:dumCOMPACTsupp} and the continuity of $d$ as operator from $L^2 \boldsymbol\Omega^1 (B)$ to $H^{-1} \boldsymbol\Omega^2 (B)$, we get
\[\norm{f_m}{}{H^1 \boldsymbol\Omega^2 (B)} \lesssim \norm{d (A_1 - A_2)}{}{H^{-1} \boldsymbol\Omega^2 (B)} + \norm{u_m - (A_1 - A_2)}{}{L^2 \boldsymbol\Omega^1 (B)}. \]
Since the second term on the right hand side of last estimate tends to vanish as $m$ grows, the sequence $	\{f_m\}$ is bounded. By a standard compactness argument, there exist a subsequence $\{f_{m_k}\}$ and $f \in H^1 \boldsymbol\Omega^2 (B)$ such that $f_{m_k}$ converges to $f$ in $L^2 \boldsymbol\Omega^2 (B)$ as $k$ goes to infinity and
\begin{equation}
\norm{f}{}{H^1 \boldsymbol\Omega^2 (B)} \lesssim \norm{d (A_1 - A_2)}{}{H^{-1} \boldsymbol\Omega^2 (B)}. \label{es:casiACABO}
\end{equation}
Finally, by the continuity of $\delta$ as operator from $L^2 \boldsymbol\Omega^2 (B)$ to $H^{-1} \boldsymbol\Omega^1 (B)$ and the uniqueness of the limit in the latter space we have that $\delta f = \delta F$. Thus, the first part of the lemma follows from \eqref{es:casiACABO} and Lemma \ref{lem:HODGEdecomposition}.
\end{proof}

\begin{remark} \sl Note that the condition of $A_1 - A_2$ having support in $\overline{\Omega} \subset B'$ is fundamental to obtain \eqref{es:HODGEgauge}. Otherwise, estimate \eqref{es:dumCOMPACTsupp} would become
\begin{equation*}
\left| \int_B \langle d u_m, v \rangle \, dx \right| \lesssim \norm{d u_m}{}{H^1 \boldsymbol\Omega^2_N (B)^\ast} \norm{v}{}{H^1 \boldsymbol\Omega^2_N (B)}
\end{equation*}
with $H^1 \boldsymbol\Omega^2_N (B)^\ast$ denoting the dual space of $H^1 \boldsymbol\Omega^2_N (B)$.
\end{remark}

\begin{acknowledgments} \rm The authors are supported by the projects ERC-2010 Advanced Grant, 267700 - InvProb and Academy of Finland (Decision number 250215, the Centre of Excellence in Inverse Problems). PC also belongs to the project MTM 2011-02568 Ministerio de Ciencia y Tecnolog\'ia de Espa\~na.
\end{acknowledgments}

\end{document}